\newtheorem{thm}{Theorem}[section]
\newtheorem{lem}[thm]{Lemma}
\newtheorem{prp}[thm]{Proposition}
\newtheorem{cor}[thm]{Corollary}
\newtheorem{dfn}[thm]{Definition}
\newtheorem{que}{Question}
\newtheorem{baseexample}[thm]{Example} 
\newtheorem{baseremark}[thm]{Remark} 
\newenvironment{example}
{\begin{baseexample}\rm}{\end{baseexample}}
\newenvironment{remark}
{\begin{baseremark}\rm}{\end{baseremark}}
\newcommand{\rem}[1]{}
\newcommand{\N}{\mathbb{N}}
\newcommand{\Q}{\mathbb{Q}}
\newcommand{\Z}{\mathbb{Z}}
\newcommand{\frakSmall}{
\newcommand{\fraka}{{\mathfrak{a}}}
\newcommand{\frakb}{{\mathfrak{b}}}
\newcommand{\frakc}{{\mathfrak{c}}}
\newcommand{\frakd}{{\mathfrak{d}}}
\newcommand{\frake}{{\mathfrak{e}}}
\newcommand{\frakf}{{\mathfrak{f}}}
\newcommand{\frakg}{{\mathfrak{g}}}
\newcommand{\frakh}{{\mathfrak{h}}}
\newcommand{\fraki}{{\mathfrak{i}}}
\newcommand{\frakj}{{\mathfrak{j}}}
\newcommand{\frakk}{{\mathfrak{k}}}
\newcommand{\frakl}{{\mathfrak{l}}}
\newcommand{\frakm}{{\mathfrak{m}}}
\newcommand{\frakn}{{\mathfrak{n}}}
\newcommand{\frako}{{\mathfrak{o}}}
\newcommand{\frakp}{{\mathfrak{p}}}
\newcommand{\frakq}{{\mathfrak{q}}}
\newcommand{\frakr}{{\mathfrak{r}}}
\newcommand{\fraks}{{\mathfrak{s}}}
\newcommand{\frakt}{{\mathfrak{t}}}
\newcommand{\fraku}{{\mathfrak{u}}}
\newcommand{\frakv}{{\mathfrak{v}}}
\newcommand{\frakw}{{\mathfrak{w}}}
\newcommand{\frakx}{{\mathfrak{x}}}
\newcommand{\fraky}{{\mathfrak{y}}}
\newcommand{\frakz}{{\mathfrak{z}}}
}
\newcommand{\calCapital}{
\newcommand{\calA}{{\mathcal{A}}}
\newcommand{\calB}{{\mathcal{B}}}
\newcommand{\calC}{{\mathcal{C}}}
\newcommand{\calD}{{\mathcal{D}}}
\newcommand{\calE}{{\mathcal{E}}}
\newcommand{\calF}{{\mathcal{F}}}
\newcommand{\calG}{{\mathcal{G}}}
\newcommand{\calH}{{\mathcal{H}}}
\newcommand{\calI}{{\mathcal{I}}}
\newcommand{\calJ}{{\mathcal{J}}}
\newcommand{\calK}{{\mathcal{K}}}
\newcommand{\calL}{{\mathcal{L}}}
\newcommand{\calM}{{\mathcal{M}}}
\newcommand{\calN}{{\mathcal{N}}}
\newcommand{\calO}{{\mathcal{O}}}
\newcommand{\calP}{{\mathcal{P}}}
\newcommand{\calQ}{{\mathcal{Q}}}
\newcommand{\calR}{{\mathcal{R}}}
\newcommand{\calS}{{\mathcal{S}}}
\newcommand{\calT}{{\mathcal{T}}}
\newcommand{\calU}{{\mathcal{U}}}
\newcommand{\calV}{{\mathcal{V}}}
\newcommand{\calW}{{\mathcal{W}}}
\newcommand{\calX}{{\mathcal{X}}}
\newcommand{\calY}{{\mathcal{Y}}}
\newcommand{\calZ}{{\mathcal{Z}}}
}
\newcommand{\bbCapital}{
\newcommand{\bbA}{{\mathbb{A}}}
\newcommand{\bbB}{{\mathbb{B}}}
\newcommand{\bbC}{{\mathbb{C}}}
\newcommand{\bbD}{{\mathbb{D}}}
\newcommand{\bbE}{{\mathbb{E}}}
\newcommand{\bbF}{{\mathbb{F}}}
\newcommand{\bbG}{{\mathbb{G}}}
\newcommand{\bbH}{{\mathbb{H}}}
\newcommand{\bbI}{{\mathbb{I}}}
\newcommand{\bbJ}{{\mathbb{J}}}
\newcommand{\bbK}{{\mathbb{K}}}
\newcommand{\bbL}{{\mathbb{L}}}
\newcommand{\bbM}{{\mathbb{M}}}
\newcommand{\bbN}{{\mathbb{N}}}
\newcommand{\bbO}{{\mathbb{O}}}
\newcommand{\bbP}{{\mathbb{P}}}
\newcommand{\bbQ}{{\mathbb{Q}}}
\newcommand{\bbR}{{\mathbb{R}}}
\newcommand{\bbS}{{\mathbb{S}}}
\newcommand{\bbT}{{\mathbb{T}}}
\newcommand{\bbU}{{\mathbb{U}}}
\newcommand{\bbV}{{\mathbb{V}}}
\newcommand{\bbW}{{\mathbb{W}}}
\newcommand{\bbX}{{\mathbb{X}}}
\newcommand{\bbY}{{\mathbb{Y}}}
\newcommand{\bbZ}{{\mathbb{Z}}}
}
\newcommand{\what}[1]{\widehat{#1}}
\newcommand{\veps}{\varepsilon}
\newcommand{\vphi}{\varphi}
\newcommand{\lmb}{\lambda}
\newcommand{\Lmb}{\Lambda}
\newcommand{\idealof}{\unlhd} 
\newcommand{\derives}{\Longrightarrow}
\newcommand{\nderives}{\centernot\Longrightarrow}
\newcommand{\suchthat}{\,:\,}
\newcommand{\where}{\,|\,}
\newcommand{\quo}[1]{\overline{#1}}
\newcommand{\SMatII}[4]{\left[\begin{array}{cc} {#1} & {#2} \\ {#3} &
{#4} \end{array}\right]}
\newcommand{\smallSMatII}[4]{\left[\begin{smallmatrix} {#1} & {#2} \\ {#3} &
{#4} \end{smallmatrix}\right]}
\newcommand{\SMatIII}[9]{\left[\begin{array}{ccc} {#1} & {#2} & {#3} \\ {#4} &
{#5} & {#6} \\ {#7} & {#8} & {#9} \end{array}\right]}
\newcommand{\DotsArr}[4]{
\begin{array}{ccc} {#1} & \ldots & {#2} \\
\vdots & \ddots & \vdots \\
{#3} & \ldots & {#4} \end{array}}
\newcommand{\Trings}[1]{\left< #1 \right>}
\DeclareMathOperator{\ann}{ann}
\newcommand{\annl}{\ann^\ell}
\newcommand{\annr}{\ann^r}
 \DeclareMathOperator{\Aut}{Aut}
\DeclareMathOperator{\Bil}{Bil} %
\DeclareMathOperator{\Br}{Br} %
\DeclareMathOperator{\Cent}{Cent}
\DeclareMathOperator{\Char}{char}
\DeclareMathOperator{\Cor}{Cor}
 \DeclareMathOperator{\End}{End}
\DeclareMathOperator{\Gal}{Gal} 
\DeclareMathOperator{\Hom}{Hom} %
\DeclareMathOperator{\id}{id} %
\DeclareMathOperator{\im}{im} %
\DeclareMathOperator{\Inn}{Inn} %
\DeclareMathOperator{\Jac}{Jac} %
\newcommand{\op}{\mathrm{op}} %
\DeclareMathOperator{\Out}{Out} %
\DeclareMathOperator{\Spec}{Spec} %
\DeclareMathOperator{\udim}{u.dim}
\newcommand{\nMat}[2]{\mathrm{M}_{#2}(#1)}
\newcommand{\trans}[1]{#1^{{T}}\!}
\newcommand{\dirlim}{\underrightarrow{\lim}\,}
\newcommand{\units}[1]{{#1^\times}}
\newcommand{\ideal}[1]{\left<#1\right>}
\newcommand{\rMod}[1]{{\mathrm{Mod}\textrm{-}{#1}}}
\newcommand{\lMod}[1]{{#1}\textrm{-}{\mathrm{Mod}}}
\newcommand{\biMod}[2]{{\mathrm{Mod}\textrm{-}({#1},{#2})}}
\newcommand{\rproj}[1]{{\mathrm{proj}}\textrm{-}{#1}}
\newcommand{\ids}[1]{\mathrm{E}(#1)}
\newcommand{\lAd}[1]{\mathrm{Ad}^\ell_{#1}}
\newcommand{\rAd}[1]{\mathrm{Ad}^r_{#1}}
\newcommand{\mul}[1]{\odot_{#1}} 
\newcommand{\DRMod}[1]{{\mathrm{DMod}\textrm{-}{#1}}}
\newcommand{\aEnd}{\End^-}
\newcommand{\aAut}{\Aut^-}
\newcommand{\regBil}[1]{\Bil_{\mathrm{reg}}({#1})}
\newcommand{\genBil}[1]{\Bil_{\mathrm{gen}}({#1})}
\newcommand{\lotimes}[1]{{}_{#1}\!\otimes}
\newcommand{\twistmul}[1]{\diamond_{#1}}
\newcommand{\wsim}{\sim_{\mathrm{w}}}
\newcommand{\wiso}{\cong_{\mathrm{w}}}
\title{General Bilinear Forms}
\author{Uriya A.\ First}
\date{\today}
\address{Einstein Institute of Mathematics, Hebrew University of Jerusalem}
\email{uriya.first@gmail.com}
\thanks{This research was partially supported by an Israel-US BSF grant \#2010/149 and an ERC grant \#226135.}
\keywords{bilinear form, sesquilinear form, general bilinear form,
anti-automorphism, involution}
\subjclass[2010]{11E39, 15A63.}
\begin{document}

\maketitle

\begin{abstract}
    We introduce the new notion of \emph{general bilinear forms} (generalizing sesquilinear forms)
    and prove that for
    every ring $R$ (not necessarily commutative, possibly without involution)
    and every right $R$-module $M$ which is a \emph{generator}
    (i.e.\ $R_R$ is a summand of $M^n$ for some $n\in\N$),
    there is a one-to-one correspondence between the anti-automorphisms of $\End(M)$
    and the general \emph{regular} bilinear forms on $M$, considered up to \emph{similarity}.
    This generalizes a well-known similar correspondence in the case $R$ is a field. We also demonstrate that there
    is no such correspondence for arbitrary $R$-modules.

    We use the generalized correspondence to show that there is a
    canonical set isomorphism $\Inn(R)\setminus\! \aAut(R)\cong \Inn(\nMat{R}{n})\setminus\! \aAut(\nMat{R}{n})$,
    provided $R_R$ is the only right $R$-module $N$ satisfying $N^n\cong R^n$, and also to prove
    a variant of a theorem of Osborn. Namely, we classify all semisimple rings
    with involution admitting no non-trivial idempotents that are invariant under the involution.

\rem{
    We use the generalized correspondence for various applications. Amongst are:
    (1)~A new short to a theorem of Saltman: An Azumaya algebra is Brauer equivalent to its opposite iff it is Brauer equivalent
    to an algebra with an involution of the first kind. (We generalize this to all rings with a \emph{Goldman element}.)
    (2)~A new short to a theorem of Osborn classifying
    rings with involution $\alpha$ in which all $\alpha$-invariant elements are invertible or nilpotent.
    (3)~There is a canonical set isomorphism $\Inn(R)\setminus\! \aAut(R)\cong \Inn(\nMat{R}{n})\setminus\! \aAut(\nMat{R}{n})$,
    provided $R_R$ is the only right $R$-module $N$ satisfying $N^n\cong R^n$.
    (4)~A~semilocal or \emph{$\Q$-finite}
    ring is Morita equivalent to its opposite iff it is Morita equivalent to  a ring with an anti-automorphism.
    (5)~If $R$
    is a semiperfect (resp.\ simple artinian) ring
    with an involution and $S$ is the basic ring that is Morita equivalent to $R$, then $\nMat{S}{2}$ (resp.\ $S$)
    has an involution.
}
\rem{
    Let $F$ be a field, let $\sigma$ be an involution of $F$,
    and let $V$ be a finite dimensional vector space.
    There is a well-known one-to-one correspondence between
    the regular $\sigma$-sesquilinear forms on $V$, considered up to multiplication by an element of $\units{F}$,
    and the set of anti-automorphisms of $\End_F(V)$ whose restriction to $F=\Cent(\End_F(V))$ is $\sigma$.
    This result is the foundation of the widely studied connection between hermitian forms and involutions.

    In this paper, we present a new notion of \emph{general bilinear forms}, and use it to prove a
    generalization of
    the correspondence to modules over arbitrary rings:
    If $R$ is a ring (not necessarily commutative, possibly without involution) and $M$ is a right $R$-module, then
    under mild assumptions, there is a one-to-one correspondence between
    the regular general bilinear forms over $M$, considered up to \emph{similarity},
    and the set of anti-automorphisms of $\End_R(M)$.
    In particular, we obtain a new proof of the classical result, which does not use the Skolem-Noether theorem.

    We use the generalized correspondence to obtain relatively short proofs of:
    (1) A theorem of Saltman: An Azumaya algebra is Brauer equivalent to its opposite iff it is Brauer equivalent
    to an algebra with an involution of the first kind.
    (2) A result of Osborn classifying
    rings with involution $\alpha$ in which all $\alpha$-invariant elements are invertible or nilpotent.
    Other applications of the correspondence include: (3)~A~semilocal or \emph{$\Q$-finite}
    ring is Morita equivalent to its opposite iff it is Morita equivalent to  a ring with an anti-automorphism. (4)~If $R$
    is a semiperfect (resp.\ simple artinian) ring
    with an involution and $S$ is the basic ring that is Morita equivalent to $R$, then $\nMat{S}{2}$ (resp.\ $S$)
    has an involution. (5)~Provided $R_R$ is the only right $R$-module $N$ satisfying $N^n\cong R^n$ ($n$ fixed),
    there is a canonical set isomorphism $\Inn(R)\setminus\! \aAut(R)\cong \Inn(\nMat{R}{n})\setminus\! \aAut(\nMat{R}{n})$.
}
\end{abstract}

\section{Overview}

    Unless specified otherwise, all rings are assumed to have a unity and ring
    homomorphisms are required to preserve it.
    Subrings are assumed to have the same unity as the ring containing them.
    Given a ring $R$, denote its set of invertible elements by $\units{R}$, its center by $\Cent(R)$ and
    its inner automorphism group by $\Inn(R)$. The $n\times n$
    matrices over $R$ are denoted by $\nMat{R}{n}$ and category of right (left) $R$-modules is denoted by $\rMod{R}$
    ($\lMod{R}$). If a module $M$ can be considered as a module over several rings, we use $M_R$ to denote ``$M$,
    considered as a right $R$-module''.
    Throughout, a \emph{semisimple} ring means a \emph{semisimple artinian} ring.

    \medskip

    Let $R$ be an arbitrary (not-necessarily commutative) ring and let
    $M$ be a right $R$-module satisfying certain mild assumptions (e.g.\ being a generator).
    In the main result of this paper, we  establish a one-to-one correspondence between the
    set of anti-automorphisms
    of the ring $\End_R(M)$, denoted
    $\aAut(\End_R(M))$,  and the regular bilinear forms on $M$, considered up to an suitable equivalence relation.
    Moreover, the correspondence  maps involutions to symmetric bilinear forms.
    The statements just made assumed that there exists a notion of bilinear  forms on modules over \emph{arbitrary} rings.
    Indeed, to that purpose we introduce \emph{general bilinear forms}, which generalize sesquilinear forms and other similar notions.

    We will use the generalized correspondence to show that there is a canonical set isomorphism
    \begin{equation}\label{GEN:EQ:anti-auto-of-matrices-promo}
    \Inn(R)\setminus\! \aAut(R)\quad\cong\quad \Inn(\nMat{R}{n})\setminus\! \aAut(\nMat{R}{n})\ ,
    \end{equation}
    provided that $N^n\cong R^n$ implies $N\cong R_R$ for all $N\in\rMod{R}$ (with $n\in\N$ fixed). In case
    $R$ has an anti-automorphism (e.g.\ if $R$ is commutative),
    this implies that $\Inn(R)\setminus\!\Aut(R)\cong \Inn(\nMat{R}{n})\setminus\!\Aut(\nMat{R}{n})$, a
    statement that can be understood as a Skolem-Noether Theorem; compare with \cite[Th.\ 2.10]{Sa99}.

    We also use the correspondence to give an easy proof to a variant of a theorem of Osborn
    (\cite[Th.\ 2]{Osborn70}). Osborn's Theorem determines the structure of rings
    with involution $(R,\alpha)$ in which $2$ is invertible and all $\alpha$-invariant elements are invertible or nilpotent.
    We will determine the structure of all \emph{semisimple} rings with involution $(R,\alpha)$ such that the only $\alpha$-invariant
    idempotents in $R$ are $0$ and $1$. In particular, we will get a new proof of Osborn's Theorem in the case $R$ is semilocal.

    Further more specialized applications (e.g.\ \cite{Fi13B}) will be published elsewhere.

    \medskip

    Our correspondence in fact generalizes a similar well-known correspondence in the case $R$ is a field:

\rem{
    Let $(R,*)$ be a ring with involution and let $\lmb\in \Cent(R)$ be an element satisfying
    $\lmb^*\lmb=1$. Recall that a \emph{sesquilinear space} over $(R,*)$ consists of a pair
    $(M,b)$ such that $M$ is a right $R$-module and $b:M\times M\to R$ is a biadditive
    map satisfying
    \[
    b(xr,y)=r^*b(x,y)\qquad\textrm{and}\qquad b(x,yr)=b(x,y)r
    \]
    for all $x,y\in M$ and $r\in R$.
    If moreover $b(x,y)=\lmb b(y,x)^*$, then $b$ is called \emph{$\lmb$-hermitian} (or \emph{$(*,\lmb)$-hermitian}).
    Hermitian forms and sesquilinear forms are widely studied in the literature, sometimes
    under the context of \emph{hermitian categories}; for instance, see \cite{OMeara63}, \cite{QuSchSch79},
    \cite{SchQuadraticAndHermitianForms}, \cite{Kn91}. To avoid ambiguity with general bilinear forms,
    we henceforth address sesquilinear forms as \emph{classical bilinear forms} (over $(R,*)$).

    The following theorem is well-known:
}

    \begin{thm}\label{GEN:TH:classical-cor}
        Let $F$ be a field, let $\alpha_0$ be an automorphism
        of $F$ of order $1$ or $2$, and let $V$ be a finite dimensional vector space.
        Then there exists a one-to-one correspondence between nondegenerate sesquilinear
        forms $b:V\times V\to F$ over $(F,\alpha_0)$, considered up to multiplication by an element of $\units{F}$,
        and the anti-automorphisms $\alpha$ of $\End_F(V)$ whose restriction to $F=\Cent(\End_F(V))$
        is $\alpha_0$. The correspondence is given by sending $b:V\times V\to F$ to the unique anti-automorphism $\alpha$
        satisfying
        \[
        b(w x,y)=b(x,w^\alpha y)\qquad\forall\ x,y\in M,\, w\in\End_F(V)
        \]
        and it maps involutions to hermitian forms.
    \end{thm}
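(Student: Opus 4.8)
The plan is to build the correspondence explicitly through adjoints and then verify, in turn, that the assignment is well defined on similarity classes, injective, surjective, and compatible with the involution/hermitian dichotomy. Given a nondegenerate sesquilinear form $b$, the defining relation $b(wx,y)=b(x,w^\alpha y)$ forces $w^\alpha y$ to be the unique vector representing the $\alpha_0$-semilinear functional $x\mapsto b(wx,y)$; such a vector exists and is unique exactly because nondegeneracy makes $z\mapsto b(-,z)$ a bijection from $V$ onto the space of $\alpha_0$-semilinear functionals (both are $F$-spaces of dimension $\dim V$). First I would check that $y\mapsto w^\alpha y$ is $F$-linear, so $w^\alpha\in\End_F(V)$, and that $w\mapsto w^\alpha$ is additive, reverses products (reading off $b(w_1w_2x,y)=b(w_2x,w_1^\alpha y)=b(x,w_2^\alpha w_1^\alpha y)$), and is bijective, its inverse being the adjoint computed in the second slot. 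Evaluating on the scalar operator $x\mapsto xr$ yields $\alpha|_F=\alpha_0$, and replacing $b$ by $cb$ with $c\in\units{F}$ leaves the defining relation untouched, so the map descends to similarity classes.

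For injectivity, suppose $b_1,b_2$ induce the same $\alpha$. Using nondegeneracy of $b_1$ I would write $b_2(x,y)=b_1(x,gy)$ for a unique $g\in\End_F(V)$; substituting into the two adjoint relations gives $b_1(x,w^\alpha gy)=b_1(x,gw^\alpha y)$ for all $x$, hence $gw^\alpha=w^\alpha g$ for every $w$. Since $\alpha$ is surjective, $g$ commutes with all of $\End_F(V)$, so $g$ is a nonzero scalar and $b_2$ is a scalar multiple of $b_1$.

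Surjectivity is where I expect the real work, and where the Skolem--Noether theorem enters. Fixing once and for all a nondegenerate form $b_0$ (the standard one in a basis) with induced anti-automorphism $\tau$, any $\alpha$ with $\alpha|_F=\alpha_0$ has the property that $\alpha\circ\tau^{-1}$ is an $F$-algebra automorphism of $\End_F(V)\cong\nMat{F}{n}$, hence inner, say $\alpha\circ\tau^{-1}=\Inn(u)$. I would then set $b(x,y)=b_0(x,u^{-1}y)$ and verify directly that $b$ is sesquilinear, nondegenerate, and has adjoint $w\mapsto u\,w^\tau u^{-1}=w^\alpha$. This is precisely the step the paper's general machinery is designed to bypass; in the field case it is the cleanest route.

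Finally, for the involution/hermitian claim I would introduce $b^*(x,y):=b(y,x)^{\alpha_0}$, check it is again sesquilinear, and compute that its adjoint is $\alpha^{-1}$. Thus $\alpha$ is an involution iff $b^*$ and $b$ induce the same anti-automorphism iff, by the injectivity already established, $b^*=cb$ for some $c\in\units{F}$; unwinding this equation yields $c^{\alpha_0}c=1$, i.e.\ $b$ is $\lambda$-hermitian with $\lambda=c^{-1}$ of norm one. When $\alpha_0\neq\id$, Hilbert 90 lets me rescale $b$ within its similarity class to achieve $\lambda=1$, a genuine hermitian form; when $\alpha_0=\id$ the two norm-one values $c=\pm1$ are genuinely distinct and correspond to symmetric and alternating forms, so ``hermitian'' must be read in this broad sense. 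The main obstacles are therefore the surjectivity argument, which rests on Skolem--Noether, and this final normalization subtlety.
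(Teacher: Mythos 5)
Your proof is correct, but it is not the paper's route: it is essentially the classical argument that the paper's printed proof simply cites (the proof of Theorem~\ref{GEN:TH:classical-cor} in the paper is a reference to the Book of Involutions, Ch.~1, Th.~4.2), and the paper explicitly announces that its own contribution is a proof avoiding Skolem--Noether. Your steps are all sound: adjoint existence and uniqueness via the semilinear duality $z\mapsto b(-,z)$ (with bijectivity of the induced $\alpha$ coming from the second-slot adjoint, which uses that left nondegeneracy follows from right nondegeneracy in finite dimension), injectivity via $b_2(x,y)=b_1(x,gy)$ and the centralizer argument, surjectivity by writing $\alpha\circ\tau^{-1}=\Inn(u)$ for a reference form $b_0$ and setting $b(x,y)=b_0(x,u^{-1}y)$, and the involution analysis via $b^*(x,y)=b(y,x)^{\alpha_0}$ yielding $\lambda^{\alpha_0}\lambda=1$. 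The paper's alternative proof instead constructs the form \emph{canonically} from $\alpha$: it defines $b_\alpha(x,y)=x\otimes_\alpha y$ with values in $K_\alpha=V\otimes_\alpha V$, so no reference form $b_0$ and no implementing element $u$ are ever chosen; regularity of $b_\alpha$ and the bijectivity of the correspondence come from the generator/progenerator machinery (Theorem~\ref{GEN:TH:main-thm-II}, Corollary~\ref{GEN:CR:generic-forms-over-generators}), and only afterwards is $K_\alpha$ identified with the standard double $F$-module of $(F,\alpha|_F)$ (Theorem~\ref{GEN:TH:when-K-alpha-is-standard}, Proposition~\ref{GEN:PR:classical-cor-over-fields}), recovering the classical statement. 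What each approach buys: yours is shorter and self-contained in the field case, at the cost of Skolem--Noether and a non-canonical choice of $(b_0,u)$; the paper's costs more infrastructure but removes the order-$\le 2$ restriction on $\alpha|_F$ (treating \emph{all} anti-automorphisms, even anti-endomorphisms) and generalizes verbatim to generators over arbitrary rings, where Skolem--Noether is unavailable. Finally, your closing normalization caveat is exactly right and matches the paper's own gloss (Example~\ref{GEN:EX:classical-involutions}): when $\alpha_0=\id$ a symplectic involution corresponds to an alternating form, so ``hermitian'' in the statement must be read in the broad $\lambda$-hermitian sense with $\lambda^{\alpha_0}\lambda=1$, with Hilbert~90 normalizing $\lambda$ to $1$ only in the unitary case.
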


    \begin{proof}
        See \cite[Ch.\ 1, Th.\ 4.2]{InvBook}.
    \end{proof}

    We will present another proof of Theorem~\ref{GEN:TH:classical-cor}, which do not use the Skolem-Nother Theorem
    (as is the case in all proofs we have seen). Note that our correspondence treats all anti-automorphisms
    of $\End_F(V)$ and not only those whose order on $F$ is $1$ or $2$.

    \medskip

\rem{
    Theorem \ref{GEN:TH:classical-cor} implies that regular (i.e.\ nondegenerate) \emph{classical} bilinear forms
    on $V$ (up to scalar multiplication) correspond to anti-automorphisms of $\End_F(V)$
    whose restriction to $F$ has \emph{order $1$ or $2$}. We will show below that in the same manner, the \emph{general}
    bilinear forms on $V$ correspond to \emph{all} anti-automorphisms of $\End_F(V)$.
    Moreover, we will prove that for every ring $R$ and every right $R$-module $M$ which is a \emph{generator} (resp.\ \emph{progenerator}),
    there is a one-to-one correspondence between the regular (resp.\ right regular) general bilinear
    forms on $M$, considered up to \emph{similarity}, and the set of anti-\emph{auto}morphisms (resp.\ anti-\emph{endo}morphisms) of
    $\End_R(M)$, denoted $\aAut(\End_R(M))$ (resp.\ $\aEnd(\End_R(M))$). Such a correspondence may not exist for general
    right $R$-modules, though.

    We will use the generalized correspondence to show that there is a canonical set isomorphism
    \begin{equation}\label{GEN:EQ:anti-auto-of-matrices-promo}
    \Inn(R)\setminus\! \aAut(R)\quad\cong\quad \Inn(\nMat{R}{n})\setminus\! \aAut(\nMat{R}{n})\ ,
    \end{equation}
    provided that $N^n\cong R^n$ implies $N\cong R_R$ for all $N\in\rMod{R}$ (with $n\in\N$ fixed). In case
    $R$ has an anti-automorphism (e.g.\ if $R$ is commutative),
    this implies that $\Inn(R)\setminus\!\Aut(R)\cong \Inn(\nMat{R}{n})\setminus\!\Aut(\nMat{R}{n})$, a
    statement that can be understood as a Skolem-Noether Theorem; compare with \cite[Th.\ 2.10]{Sa99}.

    We also use the correspondence to give an easy proof to a variant of a theorem of Osborn
    (\cite[Th.\ 2]{Osborn70}). Osborn's Theorem determines the structure of rings
    with involution $(R,\alpha)$ in which $2$ is invertible and all $\alpha$-invariant elements are invertible or nilpotent.
    We will determine the structure of all \emph{semisimple} rings with involution $(R,\alpha)$ such that the only $\alpha$-invariant
    idempotents in $R$ are $0$ and $1$. In particular, we will get a new proof of Osborn's Theorem in the case $R$ is semilocal.

    \medskip
}

    Section \ref{section:FORM:definitions} defines general bilinear forms and presents some examples.
    In sections \ref{section:GEN:preface}
    and \ref{section:GEN:definitions}, we construct and discuss the correspondence described above,
    where in section \ref{section:GEN:regularity}, we prove that our construction does yield a correspondence under mild assumptions.
    Section~\ref{section:GEN:orthogonal-sums} examines how the new correspondence interacts with orthogonal sums, and this
    is used in section~\ref{section:GEN:generators} to sharpen the results of  section~\ref{section:GEN:regularity}.
    In that section, we show the isomorphism in \eqref{GEN:EQ:anti-auto-of-matrices-promo} and
    give a  proof of Theorem~\ref{GEN:TH:classical-cor}. Section~\ref{section:GEN:osborn}
    proves the variant of Osborn's Theorem presented above.
    Finally,
    section~\ref{section:GEN:examples}
    brings various examples of modules over which our correspondence fails.

\section{General Bilinear Forms}
\label{section:FORM:definitions}

    In this section, we define general bilinear forms and study their basic properties.
    Throughout, $R$ is a (possibly non-commutative) ring.

    \begin{dfn} \label{FORM:DF:double-module}
        A \emph{(right) double $R$-module} is an additive group $K$ together with two
        operations $\mul{0},\mul{1}:K\times R\to K$ such that $K$ is a right $R$-module
        with respect to each of $\mul{0}$, $\mul{1}$ and
        \[(k\mul{0}a)\mul{1}b=(k\mul{1}b)\mul{0}a\qquad \forall ~k\in K,~a,b\in R\ .\]
        We let $K_i$ denote the $R$-module obtained by letting $R$ act on $K$ via $\mul{i}$.

        The class of (right) double $R$-modules is denoted by $\DRMod{R}$.
        For $K,K'\in\DRMod{R}$, we define $\Hom(K,K')=\Hom_R(K_0,K'_0)\cap\Hom_R(K_1,K'_1)$.
        This makes $\DRMod{R}$ into an abelian category. (The category
        $\DRMod{R}$ is isomorphic to\linebreak
        $\rMod{(R\otimes_\Z R)}$ and also to the category
        of $(R^\op,R)$-bimodules; see Remark~\ref{GEN:RM:why-double} below for why do we prefer double modules.)\rem{\footnote{
            We prefer double $R$-modules
            on $(R^\op,R)$-bimodules since
            the usage of $(R^\op,R)$-bimodules often requires twisting of the left $R^\op$-module
            into a right $R$-module structure, thus causing ambiguity as to which right $R$-module
            structure is used.
        }}
    \end{dfn}

    \begin{dfn}
        Let $K$ be a double $R$-module. An \emph{anti-automorphism} of $K$ is
        an additive bijective  map $\theta:K\to K$ satisfying
        \[(k \mul{i} a)^\theta=k^\theta \mul{1-i} a \qquad\forall a\in R,~k\in K,~i\in\{0,1\}\ .\]
        If additionally $\theta\circ \theta=\id_K$, then $\theta$ is called an \emph{involution}.
    \end{dfn}

    A (general) \emph{bilinear space} over $R$ is a triplet $(M,b,K)$ such that $M\in\rMod{R}$, $K\in\DRMod{R}$
    and $b:M\times M\to K$ is a biadditive map
    satisfying
    \[b(xr,y)=b(x,y)\mul{0} r\qquad\textrm{and}\qquad b(x,yr)=b(x,y)\mul{1}r\]
    for all $x,y\in M$ and $r\in R$.
    In this case, $b$ is called a (general) \emph{bilinear form} (over $R$).
    Let $\theta$ be an involution of $K$.
    The form $b$ is called \emph{$\theta$-symmetric} if
    \[b(x,y)=b(y,x)^\theta\qquad\forall x,y\in M\ .\]

    \begin{example}\label{GEN:EX:standard-classical-bilinear-forms}
        Let $(R,*)$ be a ring with involution.
        Recall that a
        \emph{sesquilinear space} over $(R,*)$ consists of a pair
        $(M,b)$ such that $M$ is a right $R$-module and $b:M\times M\to R$ is a biadditive
        map satisfying
        \[
        b(xr,y)=r^*b(x,y)\qquad\textrm{and}\qquad b(x,yr)=b(x,y)r
        \]
        for all $x,y\in M$ and $r\in R$.
        If moreover $b(x,y)=\lmb b(y,x)^*$, then $b$ is called \emph{$\lmb$-hermitian} (or \emph{$(*,\lmb)$-hermitian}).
        (See \cite{QuSchSch79} or \cite{Kn91} for an extensive discussion about sesquilinear and hermitian forms.)

        We can make any  sesquilinear  form
        $b:M\times M\to R$  fit into our definition of general bilinear forms;
        simply turn
        $R$ into a double $R$-module by defining $r\mul{0}a=a^*r$ and $r\mul{1}a = ra$ for all $a,r\in R$.
        Moreover, for every $\lmb\in \Cent(R)$ with $\lmb^*\lmb=1$,  the map $\theta:R\to R$
        defined by $r^{\theta}:=\lambda r^*$ is an involution of $R$ (as a double $R$-module),
        and $b$ is $\lmb$-hermitian if and only if $b$ is $\theta$-symmetric.
    \end{example}

    To avoid any ambiguity, we shall henceforth address sesquilinear forms as \emph{classical bilinear forms}.
    General bilinear forms obtained as in Example~\ref{GEN:EX:standard-classical-bilinear-forms} will be called
    classical as well.

    \begin{example}\label{GEN:EX:form-over-incidence-algebra}
        Let $F$ be a field and
        let $S:\nMat{F}{n\times m}\to\nMat{F}{m\times n}$ be given by
        \[A^S=\SMatIII{}{}{1}{}{\iddots}{}{1}{}{}\trans{A}\SMatIII{}{}{1}{}{\iddots}{}{1}{}{}\ .\]
        (Here, $T$ denotes the transpose operation;
        $S$ reflects $A$ along the diagonal emanating from its top-right corner). It is easy
        to verify that $(AB)^S=B^SA^S$ for any two matrices over $F$, provided the multiplication is defined.

        Let $R$, $K$ be the sets of matrices of the forms
        \[
        \left[
        \begin{smallmatrix}
        * & * & * \\
        * & * & * \\
        0 & 0 & *
        \end{smallmatrix}
        \right]\quad,\quad
        \left[
        \begin{smallmatrix}
        * & * & * \\
        0 & 0 & * \\
        0 & 0 & *
        \end{smallmatrix}
        \right]
        \]
        with entries in $F$, respectively. Then $R$ is a subring of $\nMat{F}{3}$ and $K$
        is a double $R$-module w.r.t.\ the operations
        \[
        k\mul{0} r=r^Sk\qquad\textrm{and}\qquad k\mul{1}r=kr\ ,
        \]
        where $k\in K$ and $r\in R$. Furthermore, the map $\theta:k\mapsto k^S$
        is an involution of $K$. Let $M$ be the set of matrices of the form
        $
        [
        \begin{smallmatrix}
        * & * & * \\
        0 & 0 & *
        \end{smallmatrix}
        ]
        $
        with entries in $F$. Then $M$ is a right $R$-module w.r.t.\ the standard matrix multiplication.
        Define $b:M\times M\to K$ by $b(x,y)=x^Sy$. Then $(M,b,K)$ is a bilinear space over $R$ and $b$
        is $\theta$-symmetric. Nevertheless, $R$ has no anti-automorphism! This can be seen by carefully checking
        how would an anti-automorphism act on the the standard matrix units in $R$, or by noting that $R$
        is an \emph{incidence algebra} of a partially-ordered set without an anti-automorphism.
    \end{example}

    We now turn to define nondegenerate and regular\footnote{
        Some texts use ``unimodular'' instead of ``regular''.
    }
    bilinear forms.
    Henceforth, $K$ is a fixed double $R$-module.

    \smallskip

    Let $M\in\rMod{R}$ and let $i\in\{0,1\}$. The \emph{$i$-$K$-dual} (or just $i$-dual) of
    $M$ is defined by\footnote{
        The reason that we do not define $M^{[i]}$ to be $\Hom_{R}(M,K_i)$ is because
        we want $R^{[i]}$ to be isomorphic to $K_i$ via $f\leftrightarrow f(1)$.
    }
    \[M^{[i]}:=\Hom_{R}(M,K_{1-i})\ .\]
    Note $M^{[i]}$ is naturally
    a right $R$-module w.r.t.\ the operation $(fr)(m)=(fm)\mul{i}r$ (where $f\in M^{[i]}$, $r\in R$ and $m\in M$).
    Moreover,
    $M\mapsto M^{[i]}$ is a left-exact contravariant functor from $\rMod{R}$ to itself, which we denote by $[i]$.

    Let $b:M\times M\to K$ be a (general) bilinear form. The \emph{left adjoint} and \emph{right adjoint}
    of $b$ are defined as follows:
    \[\lAd{b}:M\to M^{[0]},\quad (\lAd{b} x)(y)=b(x,y)\ ,\]
    \[\rAd{b}: M\to M^{[1]},\quad (\rAd{b} x)(y)=b(y,x)\ ,\]
    where $x,y\in M$. It is straightforward to check that $\lAd{b}$
    and $\rAd{b}$ are right $R$-linear. We say that
    $b$ is \emph{right regular} if $\rAd{b}$ is bijective and
    \emph{right injective} or \emph{right nondegenerate} if $\rAd{b}$ is injective.
    Left regularity and left injectivity are defined in the same manner. Bilinear forms
    that are not right injective are called \emph{right degenerate}. This is equivalent
    to the existence of $0\neq x\in M$ such that $b(M,x)=0$.

    If $b$ is right regular, then every $w\in\End_R(M)$ admits a \emph{unique} element $w^\alpha\in\End_R(M)$
    such that
    \[
    b(wx,y)=b(x,w^\alpha y)\qquad\forall x,y\in M\ .
    \]
    Indeed, a straightforward computation shows that $w^\alpha=(\rAd{b})^{-1}\circ w^{[1]}\circ \rAd{b}$
    satisfies these requirements. (Recall that $w^{[1]}:M^{[1]}\to M^{[1]}$ is defined by $w^{[1]}f=f\circ w$.)
    The map $w\mapsto w^\alpha$, denoted $\alpha$, is easily
    seen to be anti-\emph{endo}morphism\footnote{
        An anti-endomorphism of a ring is an additive map that preserves the unity and reverses the order of multiplication.
        It is not required to be bijective (in which case it is called an anti-automorphism).
    } of $\End_R(M)$ and is thus called the (right) \emph{corresponding anti-endomorphism} of $b$.
    Our usage of the term anti-\emph{endo}morphism, rather than anti-\emph{auto}morphism, is essential here
    because $\alpha$ need not be bijective; see Example \ref{FORM:EX:base-example}. Nevertheless, in
    case $b$ is also \emph{left} regular, $\alpha$ is invertible, for
    the left regularity implies that for all $w\in W$ there is $w^{\beta}\in W$ such that $b(x,wy)=b(w^\beta x,y)$,
    and the map $\beta$ (called the \emph{left} corresponding anti-endomorphism of $b$)
    is easily seen to be the inverse of $\alpha$.
    Furthermore, if $b$ is $\theta$-symmetric
    for some involution $\theta:K\to K$, then $\alpha$ is an involution. Indeed,
    \[
    b(x,wy)=b(wy,x)^\theta=b(y,w^\alpha x)^\theta=b(w^\alpha x,y)=b(x,w^{\alpha\alpha}y)
    \]
    and this forces $w=w^{\alpha\alpha}$ (since $b$ is right regular).

    \medskip

    We now define asymmetry maps. We will only need them briefly in sections~\ref{section:GEN:definitions} and~\ref{section:GEN:generators},
    but in general, asymmetries
    are important tools in studying non-symmetric
    forms (see \cite{Ri74}, \cite{RiSh76}, \cite{ScharR81} for classical applications).
    Let $\theta$ be an anti-auto\-mor\-phism of $K$.
    A \emph{right $\theta$-asymmetry} (resp.\ left $\theta$-asymmetry) of $b$ is a map $\lmb\in \End(M_R)$ such that $b(x,y)^\theta=b(y,\lmb x)$
    (resp.\ $b(x,y)^\theta=b(\lmb y,x)$)
    for all $x,y\in M$.
    A right $\theta$-asymmetry need neither exist nor be unique.
    Nevertheless, the following holds:

    \begin{prp}\label{FORM:PR:basic-props-of-forms-I}
        Let $(M,b,K)$ be a bilinear space over $R$ and let $\theta$ be an anti-auto\-mor\-phism of $K$.
        Define $u_{\theta,M}:M^{[0]}\to M^{[1]}$ by $u_{\theta,M}(f)=\theta\circ f$.
        Then:
        \begin{enumerate}
            \item[(i)] $u_{\theta,M}$ is an isomorphism of right $R$-modules.\footnote{
                In fact, $\{u_{\theta,M}\}_{M\in\rMod{R}}$ is a natural isomorphism from the functor $[0]$ to the functor $[1]$.
            }
            \item[(ii)] $\lmb\in\End_R(M)$ is a right $\theta$-asymmetry $\iff$
            $u_{\theta,M}\circ \lAd{b}=\rAd{b}\circ \lmb$.
            \item[(iii)] If $b$ is right regular, then $(\rAd{b})^{-1}\circ u_{\theta,M}\circ \lAd{b}$ is a right $\theta$-asymmetry of $b$.
            \item[(iv)] If $b$ is right injective, then $b$ admits at most one right $\theta$-asymmetry.
            \item[(v)] The inverse of an invertible right $\theta$-asymmetry of $b$ is a left $\theta^{-1}$-asymmetry of $b$.
            \item[(vi)] $b$ is $\theta$-symmetric $\iff$ $\id_M$ is a right $\theta$-asymmetry of $b$.
            \item[(vii)] Assume $b$ is $\theta$-symmetric. Then $b$ is right regular (injective) if and only if
            $b$ is left regular (injective).
        \end{enumerate}
    \end{prp}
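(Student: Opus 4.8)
The plan is to prove (i)--(vii) in the stated order, since each part feeds the next and everything ultimately rests on the single auxiliary map $u_{\theta,M}$; so I would establish (i) completely first and then treat the rest as essentially formal consequences.

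For (i), the first thing to verify is that $u_{\theta,M}$ actually lands in $M^{[1]}$. Given $f\in M^{[0]}=\Hom_R(M,K_1)$, the computation $(\theta\circ f)(mr)=(f(m)\mul{1}r)^\theta=(f(m))^\theta\mul{0}r$ uses precisely the defining property of an anti-automorphism, namely that $\theta$ exchanges $\mul{1}$ and $\mul{0}$; this shows $\theta\circ f\in\Hom_R(M,K_0)=M^{[1]}$. Additivity is clear, and a two-sided inverse is $g\mapsto\theta^{-1}\circ g$ once one notes that $\theta^{-1}$ is again an anti-automorphism of $K$ (apply $\theta^{-1}$ to the defining identity). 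Finally, $R$-linearity is a direct comparison of the two module structures: evaluating at $m$, both $u_{\theta,M}(fr)$ and $u_{\theta,M}(f)\,r$ equal $(f(m))^\theta\mul{1}r$ --- the former because the $M^{[0]}$-action uses $\mul{0}$ which $\theta$ converts to $\mul{1}$, the latter because the $M^{[1]}$-action already uses $\mul{1}$. (The naturality asserted in the footnote is an analogous routine check that I would not need for the numbered claims.)

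Parts (ii)--(vi) are then formal. For (ii) I would evaluate both composites at a point: $(u_{\theta,M}\circ\lAd{b})(x)(y)=b(x,y)^\theta$ while $(\rAd{b}\circ\lmb)(x)(y)=b(y,\lmb x)$, so the operator identity is literally the right $\theta$-asymmetry condition. Part (iii) is immediate: the displayed map is $R$-linear (right regularity makes $(\rAd{b})^{-1}$ available and $R$-linear, and (i) supplies linearity of $u_{\theta,M}$), and it satisfies the identity of (ii) by construction, hence is an asymmetry by (ii). Part (iv) follows since two asymmetries $\lmb_1,\lmb_2$ give $\rAd{b}\circ\lmb_1=u_{\theta,M}\circ\lAd{b}=\rAd{b}\circ\lmb_2$ via (ii), and right injectivity of $\rAd{b}$ cancels. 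Part (v) is the one genuine (if tiny) computation: from $b(x,y)^\theta=b(y,\lmb x)$, substitute $x=\lmb^{-1}v$, $y=u$ to obtain $b(\lmb^{-1}v,u)^\theta=b(u,v)$, then apply $\theta^{-1}$ to read off $b(u,v)^{\theta^{-1}}=b(\lmb^{-1}v,u)$, which is exactly the left $\theta^{-1}$-asymmetry condition. Part (vi) is pure unwinding: $\id_M$ being a right $\theta$-asymmetry reads $b(x,y)^\theta=b(y,x)$, which after swapping $x\leftrightarrow y$ is $\theta$-symmetry.

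The real payoff is (vii), and it drops out with no extra work once (i), (ii), (vi) are in hand. Assuming $b$ is $\theta$-symmetric, (vi) gives that $\id_M$ is a right $\theta$-asymmetry, so (ii) with $\lmb=\id_M$ yields the clean identity $\rAd{b}=u_{\theta,M}\circ\lAd{b}$. Since $u_{\theta,M}$ is an isomorphism by (i), composition with it preserves both injectivity and bijectivity, so $\rAd{b}$ is injective (resp.\ bijective) if and only if $\lAd{b}$ is --- precisely the equivalence of left and right injectivity (resp.\ regularity). I do not anticipate any real obstacle; the only place demanding care is the index bookkeeping in (i) --- keeping straight that $M^{[0]}=\Hom_R(M,K_1)$ carries the $\mul{0}$-action while $M^{[1]}=\Hom_R(M,K_0)$ carries the $\mul{1}$-action, and that $\theta$ interchanges them --- since an index slip there would silently corrupt everything downstream.
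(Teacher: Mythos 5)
Your proof is correct and follows exactly the route the paper intends: the paper's own proof just says that (i)--(vi) are straightforward computations and that (vii) follows from (i), (ii) and (vi), which is precisely your derivation $\rAd{b}=u_{\theta,M}\circ\lAd{b}$ via (vi) and (ii) with $\lmb=\id_M$. Your explicit index bookkeeping for $u_{\theta,M}$ and the verifications in (ii)--(vi) are all accurate against the paper's definitions, so there is nothing to correct.
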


    \begin{proof}
        (i)--(vi) follow by straightforward computation. (vii) follows from (i), (ii) and (vi).
    \end{proof}

    \begin{example} \label{FORM:EX:base-example}
        Let $R$ be a ring and let $\alpha$ be an anti-\emph{endo}morphism of $R$. Define
        $K$ to be the double $R$-module obtained from $R$ by setting
        \[
        k\mul{0} r=r^\alpha k \qquad
        \textrm{and}
        \qquad k\mul{1}r=kr
        \]
        for all $r,k\in R$. The double $R$-module $K$ is called the \emph{standard double $R$-module
        of $(R,\alpha)$}.
        Define $b:R\times R\to K$ by $b(x,y)=x^\alpha y$. Then $b$ is a bilinear
        form. As $b(R,x)=0$ implies $x=0$ (since $x=b(1,x)=0$), $b$ is right injective.
        In addition, it is straightforward to check for all $f\in R^{[1]}=\Hom_R(R_R,K_0)$, $\rAd{b}(f(1))=f$, hence $\rAd{b}$ is also surjective.
        Therefore, $b$ is right regular.

        Now observe that
        for all $r,x,y\in R$,
        $b(rx,y)=(rx)^\alpha y=x^\alpha r^\alpha y=b(x,r^\alpha y)$. Thus, once identifying $\End(R_R)$
        with $R$ via $f\leftrightarrow f(1)$, the corresponding anti-endomorphism of $b$ is $\alpha$.
        It is also straightforward to check that
        $\ker(\lAd{b})=\ker(\alpha)$ and $\im(\lAd{b})=\im(\alpha)$ (once identifying $R^{[0]}=\Hom_R(R_R,K_1)=\End(R_R)$
        with $R$ as before). Thus, $\lAd{b}$ is left injective (surjective) if and only if
        $\alpha$ is. In particular, if $\alpha$ is neither injective nor surjective, then so is $\lAd{b}$, despite
        the fact
        $\rAd{b}$ is bijective. In this case, the corresponding anti-endomorphisms of $b$ is not bijective.
    \end{example}

    \begin{example}
        Let $S,R,M,b,K$ be as in Example \ref{GEN:EX:form-over-incidence-algebra}.
        Then a straightforward (but tedious) computation shows that $b$ is right and left regular.
        Moreover, $\End_R(M)$ can be identified with the ring of $2\times 2$ upper-triangular
        matrices over $F$, acting on $M$ from the left by matrix multiplication. As $b$ is $\theta$-symmetric,
        it induces an involution on $\End_R(M)$, which is easily seen to be the map $S$. Indeed,
        for all $x,y\in M$ and $w\in\End_R(M)$, we have
        \[
        b(wx,y)=(wx)^Sy=x^Sw^Sy=b(x,w^Sy)\ .
        \]
    \end{example}

    \begin{remark}
        There is no obvious way to explain general bilinear forms
        as sesquilinear forms over a hermitian category (see \cite[Ch.\ 7, \S2]{SchQuadraticAndHermitianForms} for
        definitions). However,
        it is possible to generalize the notion of hermitian categories to naturally include general bilinear forms.
        This construction  will be published elsewhere.
    \end{remark}

    \begin{remark}\label{GEN:RM:why-double}
        Our double $R$-modules are nothing but $R\otimes_\Z R$-modules or $(R^{\op},R)$-bimodules.
        However, we are led to use this notation for several reasons.
        First, this notation is shorter and clarifies the computations. (For example,
        consider the elegant definition of the functors $[0]$ and $[1]$: For
        every $M\in\rMod{R}$ and $i\in\{0,1\}$,  define $M^{[i]}=\Hom_R(M,K_{1-i})$
        and make $M^{[i]}$ into
        a right $R$-module by setting $(fr)m=(fm)\mul{i}r$).
        If we had used the language of $(R^{\op},R)$-bimodules, we would often have to  twist the left $R^{\op}$-action of $K$
        into a right $R$-action, thus causing ambiguity as to which right $R$-module structure is used. (For comparison,
        here is the definition of $[0]$ and $[1]$ in the language of bimodules: Given $M\in\rMod{R}$ and an
        $(R^\op,R)$-bimodule $K$, define $M^{[0]}=\Hom_R(M,K_R)$ and $M^{[1]}=\Hom_{R^\op}(M,{}_{R^\op}K)$,
        where in the definition of $[1]$, we twist $M$ into a left $R^\op$-module. $M^{[0]}$ admits a standard left $R^\op$-action, which
        we consider as a right $R$-action. $M^{[1]}$ has a standard right $R$-action. The shifting
        from left $R^\op$-modules to right $R$-modules becomes inconvenient when dealing with
        objects like $M^{[1][0]}$, which are treated below.)
        In addition, it is well known that every left-exact contravariant functor from $\rMod{R}$ to $\lMod{S}$ is of the
        form $\Hom(-,K)$ where $K$ is an $(S,R)$-bimodule.
        Likewise, it is possible to show that any left-exact contravariant functor from $\rMod{R}$ to $\rMod{S}$ is of the form
        $\Hom(-,K)$ where $K$ is an abelian group admitting a right $R$-module
        structure and a right $S$-module structure that commute with each other (in a vague sense, applying $\Hom(-,K)$ ``kills'' the $R$-action
        and leaves the $S$-action).
        Finally, our notation allows a natural generalization to multilinear forms: Define a right multi-$R$-module
        to be an abelian group $(K,+)$ admitting a family of $n$ right $R$-module structures $\mul{1},\dots,\mul{n}:K\times R\to K$ that
        commute with each other. A multilinear form over $R$ would be a map $b:M\times M\times\dots\times M\to K$
        such that $b$ is additive in all components and $b(\dots,x_{i-1},x_ir,x_{i+1},\dots)=b(\dots,x_{i-1},x_i,x_{i+1},\dots)\mul{i}r$
        for all $1\leq i\leq n$, $x_1,\dots,x_n\in M$ and $r\in R$.
\rem{
        We chose to introduce double $R$-modules
        rather than using $R\otimes_{\Z}R$-modules or $(R^\op,R)$-bimodules for several reasons:
        First, this saves notation (e.g.\ see the definition of
        the functors $[0]$ and $[1]$) and clarifies computations.
        In contrast to that,
        the usage of $(R^\op,R)$-bimodules often requires twisting  the left $R^\op$-module
        into a right $R$-module structure, thus causing ambiguity as to which right $R$-module
        structure is used. (Try to define $[0]$ and $[1]$ in this way, and then try to treat objects like $R^{[1][0]}\cong(K_1)^{[0]}$,
        which are treated below.)
        In addition, it is well known that every left exact contravariant functor from $\rMod{R}$ to $\lMod{S}$
        is of the form $\Hom(-,K)$ where $K$ is an $(S,R)$-bimodule. Likewise, it is possible
        to show that any left-exact contravariant functor from $\rMod{R}$ to $\rMod{S}$
        is of the form $\Hom(-,K)$ where $K$ is an abelian group admitting a right $R$-module
        structure and a right $S$-module structure that commute with each other (applying $\Hom(-,K)$ ``kills'' the $R$-action
        and leaves the $S$-action). Taking $R=S$ justifies
        our usage of double $R$-modules to describe the functors $[0]$ and $[1]$.
        Finally, our notation allows a natural generalization to multilinear forms: Define a right multi-$R$-module
        to be an abelian group $(K,+)$ admitting a family of $n$ right $R$-module structures $\mul{1},\dots,\mul{n}:K\times R\to K$ that
        commute with each other. A multilinear form over $R$ would be a map $b:M\times M\times\dots\times M\to K$
        such that $b$ is additive in all components and $b(\dots,x_{i-1},x_ir,x_{i+1},\dots)=b(\dots,x_{i-1},x_i,x_{i+1},\dots)\mul{i}r$
        for all $1\leq i\leq n$, $x_1,\dots,x_n\in M$ and $r\in R$.
}
    \end{remark}

\section{From Anti-Endomorphisms to Bilinear Forms}
\label{section:GEN:preface}

    Let $R$ be a ring and let $M$ be a  right $R$-module. Set $W=\End_R(M)$ and let\label{GEN:DF:a-end-and-a-aut}
    $\aEnd(W)$ ($\aAut(W)$) denote the set of anti-endomorphisms (anti-automorphisms) of $W$.
    We have seen that any right regular bilinear form $b:M\times M\to K$ induces an anti-endomorphism
    $\alpha\in\aEnd(W)$, which we henceforth denote by $\alpha(b)$. In this section, we construct an ``inverse''
    of the map $b\mapsto \alpha(b)$. That is, for every $\alpha\in\aEnd(W)$, we will define a bilinear space $(M,b_\alpha,K_\alpha)$
    such that
    \[
    b_\alpha(wx,y)=b(x,w^\alpha y)\qquad\forall\, x,y\in M,\, w\in W\ .
    \]
    This remarkable since, to our best knowledge, over fields, there is no canonical way to construct the \emph{classical} bilinear form
    that corresponds to a given anti-automorphism
    (see Theorem~\ref{GEN:TH:classical-cor}). Moreover, the existence of this form is usually shown using ``heavy tools''
    such as the Skolem-Noether Theorem. What allows the
    unexpected shortcut in the \emph{general} case is the freedom in choosing the double $R$-module $K_\alpha$; we
    do not have to identify it with a prescribed double $R$-module.

    \medskip

    Henceforth, $M$ is a fixed right $R$-module and $W=\End(M_R)$.

    \medskip

    We begin by introducing some new notation.
    Let $\alpha\in\aEnd(W)$ and let $A,B$ be two \emph{left} $W$-modules. Define:\label{GEN:DF:alpha-tensor}
    \[A\otimes_\alpha B=\frac{A\otimes_\Z B}{\Trings{wa\otimes b-a\otimes w^\alpha b\where a\in A,b\in B,w\in W}}~.\]
    For $a\in A$ and $b\in B$, we let $a\otimes_\alpha b$ denote the image of $a\otimes_\Z b$ in $A\otimes_\alpha B$ (the subscript $\alpha$
    will be dropped when obvious from the context).

    \begin{remark}\label{GEN:RM:twisting-remark}
        For any $B\in\lMod{W}$ and $\alpha\in\aEnd(W)$, let $B^\alpha$
        denote the right $W$-module
        obtained by twisting $B$ via $\alpha$. Namely, $B^\alpha=B$ as sets,
        but $B^\alpha$ is equipped with a right action $\twistmul{\alpha}:B\times W\to B$
        given by $x\twistmul{\alpha} w=w^\alpha x$ for all $x\in B$ and $w\in W$.
        Then the abelian group $A\otimes_\alpha B$ can
        be naturally identified with $B^\alpha \otimes_W A$.
        Therefore, $\otimes_\alpha$ is a  biadditive bifunctor
        and $W^n\otimes_\alpha B\cong B^n$.
    \end{remark}

    Consider $M$ as a left $W$-module and let $\alpha\in\aEnd(W)$.\label{GEN:DF:b-alpha-K-alpha}
    Define $K_\alpha=M\otimes_\alpha M$ and note that $K_\alpha$ is a double
    $R$-module w.r.t.\ the operations
    \[
    (x\otimes_\alpha y)\mul{0} r=xr\otimes_\alpha y\qquad\mathrm{and}\qquad (x\otimes_\alpha y)\mul{1} r=x\otimes_\alpha yr\
    \]
    ($x,y\in M$, $r\in R$). It is now clear that the map $b_\alpha:M\times M\to K_\alpha$ defined
    by $b_\alpha(x,y)=x\otimes_\alpha y$ is a bilinear form and
    \begin{equation} \label{GEN:EQ:stableness-for-b-alpha}
    b_\alpha(w x,y)=w x\otimes_\alpha y=x\otimes_\alpha w^\alpha y=b_\alpha(x,w^\alpha y)
    \end{equation}
    for all $x,y\in M$ and $w\in W$, hence $\alpha(b_\alpha)=\alpha$, provided $b_\alpha$ is right regular.
    In fact, the pair
    $(b_\alpha,K_\alpha)$ is universal w.r.t.\ satisfying \eqref{GEN:EQ:stableness-for-b-alpha} in the sense that if $b:M\times M\to K$
    is a bilinear form satisfying $b(wx,y)=b(x,w^\alpha y)$ for all $w\in W$, then there is a unique double
    $R$-module homomorphism
    $f:K_\alpha\to K$ such that $b=f\circ b_\alpha$. It is given by
    $
        f(x\otimes_\alpha y)=b(x,y)
    $.

    Assume further that $\alpha$ is an involution.
    Then $K_\alpha$ admits an involution $\theta_\alpha$ given by $\theta_\alpha(x\otimes_\alpha y)= y\otimes_\alpha x$, and
    $b_\alpha$ is $\theta_\alpha$-symmetric. Thus, every involution induces a symmetric form!

    \begin{example}\label{GEN:EX:classical-involutions}
        Let $F$ be a field, let $V$ be a f.d.\ $F$-vector-space and let $\alpha$ be an anti-automorphism
        of $\End_F(V)$ of order $1$ or $2$ on $F=\Cent(\nMat{F}{n})$. We will show below (Proposition~\ref{GEN:PR:classical-cor-over-fields}) that
        $b_\alpha$ is regular and
        $K_\alpha$ is isomorphic to the standard double $F$-module of $(F,\alpha|_F)$
        (see Example \ref{FORM:EX:base-example}). In particular, when identifying
        $K_\alpha$ with $F$, $b_\alpha$ becomes a \emph{classical} regular bilinear form
        over $(F,\alpha|_F)$, and that form (necessarily) corresponds to $\alpha$ in
        Theorem \ref{GEN:TH:classical-cor}.
        Moreover, if $\alpha$ is an \emph{orthogonal} involution, then $\theta_\alpha=\id_F$ and
        $b_\alpha$ is symmetric, and if $\alpha$ is a \emph{symplectic} involution, then $\theta_\alpha=-\id_F$
        and $b_\alpha$ is alternating.
    \end{example}

    Recalling Theorem \ref{GEN:TH:classical-cor}, we now ask whether
    the maps $b\mapsto \alpha(b)$ and $\alpha\mapsto b_\alpha$ give rise to a one-to-one correspondence
    between the right regular bilinear forms on $M$, considered up to a \emph{suitable equivalence relation}, and the
    anti-endomorphisms of $W$. In contrast to  Theorem \ref{GEN:TH:classical-cor}, the answer
    is no in general (regardless of the equivalence relation chosen), because $b_\alpha$ need not be right regular.

    \begin{example}\label{GEN:EX:counter-example-I}
        Consider the $\Z$-module $M=\Z[\frac{1}{p}]/\Z$. It is well-known that $\End(M_\Z)=\Z_p$,
        where $\Z_p$ are the $p$-adic integers. (This follows from Matlis' Duality Theory; see \cite{Ma58} or
        \cite[\S3I]{La99}.)
        Take $\alpha=\id_{\Z_p}\in\aEnd(\Z_p)$, and note that the module $M$ is $p$-divisible. Therefore, for all $x,y\in M$,
        \[x\otimes_\alpha y=p^np^{-n}x\otimes_\alpha y
        =p^{-n}x\otimes_\alpha \alpha(p^n)y=p^{-n}x\otimes_\alpha p^{n}y\ .\]
        (The ``quotient'' $p^{-n}x$ is not uniquely determined, but it does not matter to us.)
        As $p^ny=0$ for sufficiently large $n$, it follows that $x\otimes y=0$. This implies $K_\alpha=0$, hence $b_\alpha=0$.
        Moreover, the universal property of $b_\alpha$ means that there is no bilinear form $0\neq b':M\times M\to K'$
        satisfying $b'(wx,y)=b'(x,w^\alpha y)$ for
        all $w\in \Z_p$ and $x,y\in M$. In particular, $\alpha$ does not correspond to a right regular form on $M$.
    \end{example}

    More examples of this flavor can be found at section \ref{section:GEN:examples}.
    We leave the problem of determining when is $b_\alpha$ right regular to section~\ref{section:GEN:regularity},
    and proceed with defining the equivalence relation on the \emph{class} of right regular bilinear forms on $M$.\footnote{
        This is a class rather than a set because the bilinear forms in it can take
        values in arbitrary double $R$-modules.
    } There is an obvious candidate for this relation:

    \begin{dfn}
        Call two bilinear forms $b:M\times M\to K$ and $b':M\times M\to K'$
        \emph{similar} if
        there is an isomorphism $f\in \Hom_{\DRMod{R}}(K,K')$ such that $b'=f\circ b$. In this
        case, $f$ is called a \emph{similarity} from $b$ to $b'$ and we write $b\sim b'$.
    \end{dfn}

    It is easy to see that two similar regular bilinear forms induce the same anti-endomorphism.
    In addition,
    for classical bilinear forms over fields, being similar coincides with
    being the same up to multiplying by a nonzero scalar, which is the equivalence relation
    used in Theorem~\ref{GEN:TH:classical-cor}.

    \medskip

    Let us conclude: Denote by $\regBil{M}$ the \emph{category} of regular bilinear forms on $M$ with similarities as morphisms.
    We want to have a one-to-one correspondence as follows:
    \begin{equation}\label{GEN:EQ:correspondence-I}
            \xymatrixcolsep{5pc}\xymatrix{
            {\regBil{M}/\!\sim} \ar@/^1pc/[r]^{b\,\mapsto\alpha(b)} &  {\aEnd(W)} \ar@/^1pc/[l]^{b_\alpha \mapsfrom\, \alpha}
            }\ .
    \end{equation}
    In order of that to happen, we need to show that:
    \begin{enumerate}
        \item[(1)] The map $\alpha\mapsto b_\alpha$ takes values in $\regBil{M}$. Namely, $b_\alpha$ is right regular for all $\alpha\in\aEnd(W)$.
        \item[(2)] The maps $\alpha\mapsto b_\alpha$ and $b\mapsto \alpha(b)$ are mutual inverses. As
        $\alpha(b_\alpha)=\alpha$ when $b_\alpha$ is regular, this amounts to showing $b\sim b_{\alpha(b)}$ for all $b\in \regBil{M}$.
    \end{enumerate}
    (Note that (1) does imply (2); see Example~\ref{GEN:EX:b-is-not-similar-to-b-alpha}.)

    The main result of this paper (Theorem~\ref{GEN:TH:main-thm-II}) asserts that both (1) and (2) hold
    when $M$ is a \emph{progenerator}, and the analogous weaker claims for (right and left) regular
    forms and anti-\emph{auto}morphisms of $W$ hold when $M$ is a \emph{generator}. We will also show that (1) holds under other mild assumptions,
    e.g.\ when $M$ is finitely generated projective.

    \begin{remark}\label{GEN:RM:dfn-of-generic-forms}
        When (1) holds and (2) fails,
        it is still possible to obtain a correspondence between forms and anti-endomorphisms by specializing
        to \emph{generic} forms. A bilinear form $b:M\times M\to K$ is
        called \emph{right generic} if it is right regular and similar to $b_\alpha$ for some $\alpha\in\aEnd(W)$. In this case,
        we must have $\alpha(b)=\alpha$, implying $b_{\alpha(b)}=b_\alpha\sim b$. Letting $\genBil{M}$ denote
        the category of right generic bilinear forms on $M$ with similarities as morphisms, we easily see that there is a one-to-one
        correspondence
        \begin{equation}\label{GEN:EQ:correspondence-adjusted-II}
            \xymatrixcolsep{5pc}\xymatrix{
            {\genBil{M}/\!\sim} \ar@/^1pc/[r]^{b\,\mapsto\alpha(b)} &  {\aEnd(W)} \ar@/^1pc/[l]^{b_\alpha \mapsfrom\, \alpha}
            }\ ,
        \end{equation}
        provided $b_\alpha$ is right regular for all $\alpha\in\aEnd(W)$. The restriction to $\genBil{M}$ is not so bad because
        every right regular bilinear form $b$ can be swapped with its \emph{generization}, defined to be $b_{\alpha(b)}$.
        (Notice that we are assuming $b_\alpha$ is always right regular and hence the generization of $b$ is right regular. However,
        it is still open whether the generization of a right regular form is right regular in general.)
    \end{remark}

    Right generic forms are well-behaved behaved in comparison to right regular forms. This is demonstrated in the following proposition,
    which fails completely for regular forms (Example~\ref{GEN:EX:b-is-not-similar-to-b-alpha}).

    \begin{prp}\label{GEN:PR:generic-forms-basic-props}
        Let $b:M\times M\to K$ be a right generic bilinear form. If $\alpha(b)$ is an involution, then $K$ has an involution
        $\theta$ and $b$ is $\theta$-symmetric. Furthermore, $b$ is left regular.
    \end{prp}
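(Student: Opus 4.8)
The plan is to exploit that, $b$ being right generic, it is similar to its own associated form $b_\alpha$ with $\alpha := \alpha(b)$, and then to transport the canonical involution that already lives on $K_\alpha$ across the similarity; left regularity will then follow from the symmetry via the equivalence already recorded in Proposition~\ref{FORM:PR:basic-props-of-forms-I}(vii).

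First I would unwind the definitions. Since $b$ is right generic it is right regular and similar to $b_{\alpha'}$ for some $\alpha' \in \aEnd(W)$, and as observed in Remark~\ref{GEN:RM:dfn-of-generic-forms} this forces $\alpha' = \alpha(b) =: \alpha$. Hence there is a similarity $f \in \Hom_{\DRMod{R}}(K_\alpha, K)$ with $b = f \circ b_\alpha$. By hypothesis $\alpha$ is an involution, so the construction in Section~\ref{section:GEN:preface} equips $K_\alpha = M \otimes_\alpha M$ with the involution $\theta_\alpha$ given by $\theta_\alpha(x \otimes_\alpha y) = y \otimes_\alpha x$, and $b_\alpha$ is $\theta_\alpha$-symmetric.

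Next I would transport this involution to $K$ by setting $\theta := f \circ \theta_\alpha \circ f^{-1}$. Because $f$ and $f^{-1}$ are isomorphisms in $\DRMod{R}$, they commute with both operations $\mul{0}$ and $\mul{1}$, so a direct check yields $(k \mul{i} a)^\theta = k^\theta \mul{1-i} a$ for all $k \in K$, $a \in R$, $i \in \{0,1\}$, while $\theta \circ \theta = f \circ \theta_\alpha^2 \circ f^{-1} = \id_K$; thus $\theta$ is an involution of $K$. For symmetry, for any $x, y \in M$ one computes $b(y,x)^\theta = f(\theta_\alpha(y \otimes_\alpha x)) = f(x \otimes_\alpha y) = b(x,y)$, so $b$ is $\theta$-symmetric, which gives the first assertion.

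For left regularity I would simply invoke the structure theory already in place: $(M,b,K)$ is now a bilinear space whose form is both $\theta$-symmetric and right regular, so Proposition~\ref{FORM:PR:basic-props-of-forms-I}(vii)---which states that a $\theta$-symmetric form is right regular if and only if it is left regular---immediately yields that $b$ is left regular. There is no serious obstacle here; the whole argument is transport of structure along $f$ followed by an appeal to Proposition~\ref{FORM:PR:basic-props-of-forms-I}(vii). The only step demanding genuine care is verifying that $\theta = f \circ \theta_\alpha \circ f^{-1}$ respects the double-module structure, and this rests entirely on $f$ being a morphism of double $R$-modules.
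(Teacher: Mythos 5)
Your proof is correct and takes essentially the same route as the paper: the paper's one-line ``identify $K$ with $K_{\alpha(b)}$ and $b$ with $b_{\alpha(b)}$'' is precisely your transport of $\theta_{\alpha}$ along the similarity $f$, made explicit, followed by the same appeal to Proposition~\ref{FORM:PR:basic-props-of-forms-I}(vii) for left regularity. The extra verifications you carry out (that $\theta=f\circ\theta_\alpha\circ f^{-1}$ respects the double-module structure and squares to the identity) are exactly what justifies the paper's identification, so nothing is missing.
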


    \begin{proof}
        We can identify $K$ with $K_{\alpha(b)}$ and $b$ with $b_{\alpha(b)}$. Then $b_{\alpha(b)}$ is $\theta_{\alpha(b)}$-symmetric
        as explained above. Since $b$ is right regular by assumption and $\theta_{\alpha(b)}$-symmetric, it is also left regular
        by Proposition~\ref{FORM:PR:basic-props-of-forms-I}(vii).
    \end{proof}


    \begin{remark}\label{GEN:RM:onto-remark}
        For a bilinear space $(M,b,K)$, let $\im(b)$ denote the additive group spanned
        by $\{b(x,y)\where x,y\in M\}$.
        (Caution: In general, $\im(b)$ is not the image of
        $b$ in the usual sense.)
        It is easy to see that $\im(b)$ is a sub-double-$R$-module
        of $K$ and that $\im(b_\alpha)=K_\alpha$ for all $\alpha\in\aEnd(W)$.
        However, $\im(b)$ might be strictly smaller than $K$, even when $b$ is regular, in
        which case $b$ is necessarily not similar to its generization.
        This observation suggests that the problem of  regular bilinear
        forms which are not similar to their generizations might be solved
        by restricting to bilinear spaces $(M,b,K)$ with $\im(b)=K$. However, this
        adjustment is not enough in general; the regular bilinear spaces $(M,b,K)$ constructed in
        Example \ref{GEN:EX:b-is-not-similar-to-b-alpha} satisfy $\im(b)=K$ and $b\nsim b_{\alpha(b)}$.
    \end{remark}

    \begin{remark}\label{GEN:RM:weak-similarity}
        Call two right stable bilinear forms \emph{weakly similar} (denoted $\wsim$) if they have
        similar generizations. Then under the assumption that $b_\alpha$ is right regular for all $\alpha\in\aEnd(W)$,
        there is a one-to-one correspondence between $\regBil{M}/\!\wsim$ and $\aEnd(W)$.
        However, the author could not find a natural way to make $\regBil{M}$ into a category whose
        isomorphism classes are the equivalence classes of $\wsim$, i.e.\ defining \emph{weak similarities}.
        \rem{(As a thumb rule, a definition of weak similarities would be appropriate if it applies
        to arbitrary bilinear forms, rather than just right regular forms.)}
    \end{remark}

    We finish this section by presenting a \emph{left} analogue of $b_\alpha$.
    Assume $A,B\in\lMod{W}$. In the same manner we have defined $A\otimes_\alpha B$, we define
    \[A\lotimes{\alpha} B=\frac{A\otimes_\Z B}{\Trings{a\otimes wb-w^\alpha a\otimes  b\where a\in A,b\in B,w\in W}}~.\]
    In addition, we define ${}_\alpha K=M\lotimes{\alpha} M$
    and ${}_\alpha b:M\times M\to {}_\alpha K$ by $b(x,y)=x\lotimes{\alpha} y$. All the results of this
    paper have \emph{left versions} obtained by replacing $b_\alpha$, $K_\alpha$ with ${}_\alpha b$, ${}_\alpha K$
    and every right property with its left version.

    We also note that if $\alpha$ is bijective, then $A\otimes_\alpha B$ is naturally isomorphic to
    $A\lotimes{\alpha^{-1}} B$ (via $x\otimes_\alpha y\leftrightarrow x\lotimes{\alpha^{-1}} y$)
    and $b_\alpha$ is similar to ${}_{\alpha^{-1}}b$, hence both right and left versions of our results apply.

\section{Basic Properties}
\label{section:GEN:definitions}

    Let $R$, $M$ and $W$ be as in the previous section and let $\alpha,\beta\in \aEnd(W)$.
    In this  section, we answer the following questions: Provided
    $b_\alpha$ and $b_\beta$ are right regular,
    \begin{enumerate}
        \item[(1)] when are $K_\alpha$ and  $K_\beta$ isomorphic?
        \item[(2)] when are $b_\alpha$ and $b_\beta$ \emph{weakly isometric} (see below)?
        \item[(3)] when does $K_\alpha$ have an anti-automorphism? an involution?
    \end{enumerate}
    The answers are phrased in terms of $W$ and turn out to be independent of $R$ and $M$.
    They agree with the approach of \cite{InvBook} and other texts that, roughly,
    isomorphism classes of anti-automorphisms (resp.\ involutions) are in correspondence with
    isometry classes of
    sesquilinear (resp.\ hermitian) forms considered up to scalar multiplication.

    Throughout, $\Inn(W)$ denotes the group of \emph{inner} automorphisms
    of $W$ (i.e.\ those given by conjugation with an invertible element of $W$).

    \begin{prp}\label{GEN:PR:K-alpha-isomorphic-to-K-beta}
        Let $\alpha\in\aEnd(W)$ and $\vphi\in\Inn(W)$. Then $K_{\alpha}\cong K_{\vphi\circ \alpha}$ as
        double $R$-modules. Conversely, if $\alpha,\beta\in\aEnd(W)$ are such that $b_{\alpha}$ and $b_\beta$ are right regular
        and $K_\alpha\cong K_\beta$, then there exists $\vphi\in \Inn(W)$ such that
        $\beta=\vphi\circ \alpha$. In particular, if $b_\alpha$ is right regular for all $\alpha\in \aEnd(W)$,
        then the isomorphism classes of the modules $K_\alpha$ correspond to the orbits of
        the left action of $\Inn(W)$ on $\aEnd(W)$, i.e.\ to the set $\Inn(W)\setminus\aEnd(W)$.
    \end{prp}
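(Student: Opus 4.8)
The plan is to prove the two implications by exploiting the universal property of $(b_\alpha,K_\alpha)$ together with the right-adjoint machinery of Section~\ref{section:FORM:definitions}, and then to read off the ``in particular'' clause as a formal consequence.

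For the forward direction I would write $\vphi$ as conjugation by an invertible element $u\in\units{W}$, so that $\beta:=\vphi\circ\alpha$ satisfies $w^\beta=uw^\alpha u^{-1}$ for all $w\in W$. I then consider the map $b':M\times M\to K_\beta$ defined by $b'(x,y)=b_\beta(x,uy)=x\otimes_\beta uy$. Since $u$ is $R$-linear, $b'$ is a bilinear form into $K_\beta$; and the computation $wx\otimes_\beta uy=x\otimes_\beta w^\beta uy=x\otimes_\beta uw^\alpha y$ (using $w^\beta u=uw^\alpha$) shows that $b'(wx,y)=b'(x,w^\alpha y)$ for all $w$. By the universal property of $(b_\alpha,K_\alpha)$ there is a unique double $R$-module homomorphism $f:K_\alpha\to K_\beta$ with $f(x\otimes_\alpha y)=x\otimes_\beta uy$. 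Running the same construction with $\alpha,\beta,u$ replaced by $\beta,\alpha,u^{-1}$ yields $g:K_\beta\to K_\alpha$, and since $g\circ f$ and $f\circ g$ fix elementary tensors, $f$ is an isomorphism. Note this direction uses neither regularity hypothesis.

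For the converse, fix an isomorphism $f:K_\alpha\to K_\beta$ of double $R$-modules and set $b''=f\circ b_\alpha:M\times M\to K_\beta$. Then $b''\sim b_\alpha$, so $b''$ is right regular and induces the same anti-endomorphism $\alpha$. As $b''$ and $b_\beta$ both land in $K_\beta$ and are both right regular, their right adjoints $\rAd{b''},\rAd{b_\beta}:M\to M^{[1]}$ are bijective, and I would define $u:=(\rAd{b_\beta})^{-1}\circ\rAd{b''}\in\units{W}$. Unwinding the adjoints gives $b''(x,y)=b_\beta(x,uy)$ for all $x,y$. Feeding the $\alpha$-relation for $b''$ and the $\beta$-relation for $b_\beta$ into this identity produces $b_\beta(x,w^\beta uy)=b_\beta(x,uw^\alpha y)$ for all $x,y,w$; the injectivity half of the right regularity of $b_\beta$ then cancels the first slot to give $w^\beta u=uw^\alpha$, i.e.\ $\beta=\vphi\circ\alpha$ with $\vphi$ conjugation by $u$.

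Finally, under the blanket hypothesis that $b_\alpha$ is right regular for every $\alpha$, the two directions together say exactly that $K_\alpha\cong K_\beta$ if and only if $\alpha$ and $\beta$ lie in the same orbit of the left action $\vphi\cdot\alpha=\vphi\circ\alpha$ of $\Inn(W)$ on $\aEnd(W)$, yielding the claimed bijection with $\Inn(W)\setminus\aEnd(W)$. I expect the only genuinely delicate point to be the converse: rather than trying to build the inner automorphism directly on $W$, one must manufacture the invertible element $u$ out of the two right adjoints, after which the conclusion is forced by injectivity of $\rAd{b_\beta}$. Keeping the twisting conventions consistent, so that the conjugation lands on the correct side, is the other thing to watch.
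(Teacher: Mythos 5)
Your proposal is correct and takes essentially the same route as the paper: the forward direction builds the same map $x\otimes_\alpha y\mapsto x\otimes_{\vphi\circ\alpha}uy$ with inverse $x\otimes_{\vphi\circ\alpha}y\mapsto x\otimes_\alpha u^{-1}y$ (you justify well-definedness via the universal property of $b_\alpha$ where the paper verifies it directly — a cosmetic difference), and the converse manufactures the unit $u=(\rAd{b_\beta})^{-1}\circ\rAd{f\circ b_\alpha}\in\units{W}$ and cancels in the second argument using right regularity, exactly as the paper does with its two forms $b,c$ into a common $K$. There are no gaps; the identity $w^\beta u=uw^\alpha$ you obtain is the paper's conclusion up to replacing $u$ by $u^{-1}$.
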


    \begin{proof}
        Throughout, $x,y\in M$ and $w\in W$.
        Let $u\in \units{W}$ be such that $\vphi (w)=uwu^{-1}$ for all $w\in W$.
        Define $f:K_\alpha\to K_{\alpha\circ\vphi}$ by $f(x\otimes_\alpha y)=x\otimes_{\vphi\circ\alpha} uy$.
        Then $f$ is well defined since
        \[
        f(wx\otimes_\alpha y)=
        wx\otimes_{\vphi\circ \alpha} uy=x\otimes_{\vphi\circ \alpha} (uw^\alpha u^{-1})uy=
        x\otimes_{\vphi\circ \alpha}uw^\alpha y=
        f(x\otimes w^\alpha y)
        \]
        and it is easy to see that $f$ is an isomorphism of double $R$-modules
        (its inverse is given by $x\otimes_{\vphi\circ \alpha} y\mapsto x\otimes_\alpha u^{-1}y$).
        Therefore, $K_\alpha\cong K_{\vphi\circ \alpha}$.

        To prove the second part of the proposition, it is enough to show that if\linebreak $b,c:M\times M\to K$
        are two right regular bilinear forms with corresponding anti-endomorphisms $\alpha$ and $\beta$,
        then there exists $\vphi\in \Inn(W)$ s.t.\ $\beta=\vphi\circ \alpha$. Indeed,
        define $u=(\rAd{b})^{-1}\circ \rAd{c}\in \units{W}$. Then for all $x,y\in M$, $c(x,y)=(\rAd{c}y)x=
        (\rAd{b}(uy))x=b(x,uy)$. Therefore, for all $w\in W$:
        \[
        c(x,w^\beta y)=
        c(wx,y)=b(wx,uy)=
        b(x,w^\alpha uy)=
        c(x,u^{-1}w^\alpha uy)
        \]
        and it follows that $w^\beta=u^{-1}w^\alpha u$, as required.
    \end{proof}

    Call two general bilinear spaces $(M,b,K)$, $(M',b',K')$ \emph{weakly isometric}, denoted $b\wiso b'$, if there
    exist isomorphisms $\sigma\in \End_R(M,M')$ and $f\in \Hom_{\DRMod{R}}(K,K')$
    such that $b'(\sigma x,\sigma y)=f(b(x,y))$. In this case $(\sigma,f)$ is called a \emph{weak isometry}
    from $b$ to $b'$. For example, two classical bilinear forms over a field $F$ are weakly isometric
    if and only if they are isometric after multiplying one of them by a nonzero scalar.

    Let $(S,\gamma),(S',\gamma')$ be two rings with  anti-endomorphism. Recall that a homomorphism
    of rings with anti-endomorphism from $(S,\gamma)$ to $(S',\gamma')$ is a ring homomorphism
    $\vphi:S\to S'$ such that $\vphi\circ \gamma=\gamma'\circ \vphi$. In case
    $S=S'$, $\vphi$ is called \emph{inner} if it is inner as a ring endomorphism of $S$.

    \begin{prp}\label{GEN:PR:b-alpha-weakly-isometric-to-b-beta}
        Let $\alpha,\beta\in\aEnd(W)$. If there exists an inner isomorphism $\vphi:(W,\alpha)\to (W,\beta)$,
        then $b_\alpha\wiso b_\beta$. Conversely, if $b_\alpha$ and $b_\beta$ are right regular and
        weakly isomorphic, then there exists an inner isomorphism $\vphi:(W,\alpha)\to (W,\beta)$.
    \end{prp}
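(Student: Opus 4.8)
The plan is to follow the template of the proof of Proposition~\ref{GEN:PR:K-alpha-isomorphic-to-K-beta}: exhibit an explicit weak isometry (resp.\ inner isomorphism) in each direction, and in the converse compute the corresponding anti-endomorphism of an auxiliary form in two different ways. For the first implication, I would write $\vphi(w)=uwu^{-1}$ with $u\in\units{W}$, so that the intertwining hypothesis $\vphi\circ\alpha=\beta\circ\vphi$ becomes $(uwu^{-1})^\beta=uw^\alpha u^{-1}$ for all $w\in W$. Set $\sigma=u$ and define $f:K_\alpha\to K_\beta$ by $f(x\otimes_\alpha y)=ux\otimes_\beta uy$.

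The only delicate point is that $f$ is well defined, and this is precisely where the intertwining relation is used:
\[
f(wx\otimes_\alpha y)=(uwu^{-1})(ux)\otimes_\beta uy=ux\otimes_\beta(uwu^{-1})^\beta uy=ux\otimes_\beta uw^\alpha y=f(x\otimes_\alpha w^\alpha y).
\]
Granting this, $f$ is visibly a homomorphism of double $R$-modules (it commutes with $\mul{0}$ and $\mul{1}$ since $u$ is $R$-linear), and its inverse is $x\otimes_\beta y\mapsto u^{-1}x\otimes_\alpha u^{-1}y$, which is well defined because $\vphi^{-1}\colon(W,\beta)\to(W,\alpha)$ is again an inner isomorphism. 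Since $b_\beta(\sigma x,\sigma y)=ux\otimes_\beta uy=f(b_\alpha(x,y))$, the pair $(\sigma,f)$ is a weak isometry, so $b_\alpha\wiso b_\beta$.

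For the converse, assume $b_\alpha,b_\beta$ are right regular and that $(\sigma,f)$ is a weak isometry with $\sigma\in\units{W}$, $f\colon K_\alpha\to K_\beta$ an isomorphism, and $b_\beta(\sigma x,\sigma y)=f(b_\alpha(x,y))$. I would introduce the auxiliary form $c\colon M\times M\to K_\beta$, $c(x,y)=b_\beta(\sigma x,\sigma y)$, and read off its corresponding anti-endomorphism twice. On one hand $c=f\circ b_\alpha$, so $c\sim b_\alpha$; since similarity preserves right regularity and similar regular forms induce the same anti-endomorphism, $c$ is right regular with corresponding anti-endomorphism $\alpha$. On the other hand, with $\vphi(w)=\sigma w\sigma^{-1}$ and using that $\beta$ is the corresponding anti-endomorphism of $b_\beta$,
\[
c(wx,y)=b_\beta(\vphi(w)\sigma x,\sigma y)=b_\beta(\sigma x,\vphi(w)^\beta\sigma y)=c\bigl(x,\sigma^{-1}\vphi(w)^\beta\sigma\,y\bigr),
\]
so the corresponding anti-endomorphism of $c$ also equals $w\mapsto\sigma^{-1}(\sigma w\sigma^{-1})^\beta\sigma$.

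By uniqueness of the corresponding anti-endomorphism of a right regular form, these two descriptions coincide, giving $w^\alpha=\sigma^{-1}(\sigma w\sigma^{-1})^\beta\sigma$, equivalently $\vphi(w^\alpha)=(\vphi(w))^\beta$ for all $w$, i.e.\ $\vphi\circ\alpha=\beta\circ\vphi$. As $\vphi$ is the inner automorphism of $W$ induced by $\sigma\in\units{W}$, it is the desired inner isomorphism $(W,\alpha)\to(W,\beta)$. I expect the only real obstacle to be bookkeeping: keeping the direction of the conjugation $\vphi(w)=\sigma w\sigma^{-1}$ consistent so that the intertwining relation emerges as $\vphi\circ\alpha=\beta\circ\vphi$ rather than its inverse, and noting that the regularity hypotheses enter solely to make the corresponding anti-endomorphism of $c$ well defined and unique.
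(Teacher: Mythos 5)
Your proof is correct and takes essentially the same route as the paper's: your similarity $f(x\otimes_\alpha y)=ux\otimes_\beta uy$ is literally the paper's map $x\otimes_\alpha y\mapsto x\otimes_\beta u^\beta u y$ rewritten via the relation $ux\otimes_\beta z=x\otimes_\beta u^\beta z$, with the same well-definedness check using $\vphi\circ\alpha=\beta\circ\vphi$. Your converse, reading off the corresponding anti-endomorphism of the auxiliary form $c=f\circ b_\alpha=b_\beta(\sigma\,\cdot,\sigma\,\cdot)$ in two ways, is a repackaging of the paper's chain of equalities $f(b_\alpha(wx,y))=f\bigl(b_\alpha(x,(u^\beta u)^{-1}w^\beta(u^\beta u)y)\bigr)$, invoking bijectivity of $f$ and right regularity of $b_\alpha$ (and right regularity of $b_\beta$ to know $\alpha(b_\beta)=\beta$) in exactly the same places.
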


    \begin{proof}
        Let $\vphi:(W,\alpha)\to (W,\beta)$ be an inner isomorphism and write
        $\vphi(w)=uwu^{-1}$ for a suitable $u\in W$.
        The equality $\vphi\circ \alpha=\beta\circ \vphi$ is clearly
        equivalent to $w^\alpha =\vphi^{-1}(\vphi(w)^\beta)=(u^\beta u)^{-1} w^\beta (u^\beta u)$
        for all $w\in W$. Therefore, by the proof of Proposition \ref{GEN:PR:K-alpha-isomorphic-to-K-beta},
        the map $f:K_\alpha\to K_\beta$ given by
        $f(x\otimes_\alpha y)=x\otimes_\beta u^\beta uy$ is a well-defined isomorphism
        of double $R$-modules. It is now routine to check that $(u,f)$ is a weak
        isometry from $b_\alpha$ to $b_\beta$.

        Now assume we are given a weak isometry $(u,f):b_\alpha\to b_\beta$.
        Then for all $x,y\in  M$ and $w\in W$, we have
        \begin{eqnarray*}
        f(b_\alpha(wx,y))&=& b_\beta(uwx,uy)
        =b_\beta(x,w^\beta u^\beta uy)=
        b_\beta(u^{-1}ux,w^\beta u^\beta uy)\\
        &=&
        b_\beta(ux,(u^{-1})^\beta w^\beta u^\beta uy)=
        f(b_\alpha(x,u^{-1}(u^\beta)^{-1}w^\beta u^\beta uy))\\
        &=&
        f(b_\alpha(x,(u^\beta u)^{-1}w^\beta (u^\beta u)))\ .
        \end{eqnarray*}
        Since $f$ is bijective and $b_\alpha$ is right regular, $w^\alpha=(u^\beta u)^{-1} w^\beta (u^\beta u)$, and
        this is equivalent to $\vphi\circ \alpha=\beta\circ \vphi$ with $\vphi(w):=uwu^{-1}$.
    \end{proof}

    \begin{prp}\label{GEN:PR:basic-properties-of-b-alpha-II}
        Let $\alpha\in\aEnd(W)$ and assume there
        exists $\lambda\in W$ such that $w^{\alpha\alpha}\lmb=\lmb w$ for all $w\in W$
        and $\lmb^\alpha\lmb\in\units{W}$ (e.g.\ if $\alpha^2\in\Inn(W)$). Then
        the map $\theta:K_\alpha\to K_\alpha$ given by $(x\otimes_\alpha y)^\theta=y\otimes_\alpha \lmb x$
        is a well defined anti-automorphism of $K_\alpha$ and $\lmb$ is a right $\theta$-asymmetry
        of $b_\alpha$. Moreover, if $\lmb^\alpha\lmb=1$, then $\theta$ is an involution.
        Conversely, if $b_\alpha$ is right regular and $K_\alpha$ has an anti-automorphism (or involution) $\theta$,
        then there exists $\lmb\in W$ as above and $\theta$ is induced from $\lmb$.
    \end{prp}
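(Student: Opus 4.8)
The plan is to prove the two directions separately, using right regularity of $b_\alpha$ (more precisely, right injectivity) as the bridge that converts identities inside $K_\alpha$ into operator identities in $W$; the forward direction needs no regularity, while the converse leans on it twice.

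For the forward direction I would first check that $\theta$ is well defined: the assignment $(x,y)\mapsto y\otimes_\alpha\lmb x$ is biadditive, so it factors through $M\otimes_\Z M$, and it respects the defining relations of $K_\alpha$ exactly because $\lmb w=w^{\alpha\alpha}\lmb$. Indeed, applying the tentative $\theta$ to the two representatives $wx\otimes_\alpha y$ and $x\otimes_\alpha w^\alpha y$ of the same element gives $y\otimes_\alpha\lmb wx$ and $w^\alpha y\otimes_\alpha\lmb x=y\otimes_\alpha w^{\alpha\alpha}\lmb x$, which agree by hypothesis. The double-module anti-automorphism identities $(k\mul{0}r)^\theta=k^\theta\mul{1}r$ and $(k\mul{1}r)^\theta=k^\theta\mul{0}r$ then read off directly from the formula, and the asymmetry claim is just the formula rewritten as $b_\alpha(x,y)^\theta=b_\alpha(y,\lmb x)$. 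The one non-formal point, which I expect to be the main obstacle, is bijectivity of $\theta$. Here I would compute $\theta^2(x\otimes_\alpha y)=\lmb x\otimes_\alpha\lmb y=x\otimes_\alpha\lmb^\alpha\lmb y$, so $\theta^2$ is right-slot multiplication by $u:=\lmb^\alpha\lmb$. To invert it I need $u$ to commute with $\im\alpha$: applying $\alpha$ to $\lmb w=w^{\alpha\alpha}\lmb$ and combining with the same identity for $w^\alpha$ yields $w^\alpha u=u w^\alpha$ for every $w$. Since $u\in\units{W}$ commutes with each $w^\alpha$, the map $x\otimes_\alpha y\mapsto x\otimes_\alpha u^{-1}y$ respects the relations and is a two-sided inverse of $\theta^2$; hence $\theta^2$, and so $\theta$, is bijective. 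If moreover $\lmb^\alpha\lmb=1$ then $u=1$, the computation gives $\theta^2=\id_{K_\alpha}$, and $\theta$ is an involution.

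For the converse, suppose $b_\alpha$ is right regular and $\theta$ is an anti-automorphism of $K_\alpha$. By Proposition~\ref{FORM:PR:basic-props-of-forms-I}(iii) the form $b_\alpha$ admits the right $\theta$-asymmetry $\lmb=(\rAd{b_\alpha})^{-1}\circ u_{\theta,M}\circ\lAd{b_\alpha}\in W$, unique by (iv); since $(x\otimes_\alpha y)^\theta=b_\alpha(x,y)^\theta=b_\alpha(y,\lmb x)=y\otimes_\alpha\lmb x$, the map $\theta$ is induced from $\lmb$. Comparing the $\theta$-images of the equal elements $wx\otimes_\alpha y$ and $x\otimes_\alpha w^\alpha y$ gives $y\otimes_\alpha(\lmb w-w^{\alpha\alpha}\lmb)x=0$ for all $x,y$, and right injectivity then forces $\lmb w=w^{\alpha\alpha}\lmb$ for every $w\in W$.

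It remains to show $\lmb^\alpha\lmb\in\units{W}$, the second place where right injectivity is essential. Applying the same existence and uniqueness result to the anti-automorphism $\theta^{-1}$ produces its right $\theta^{-1}$-asymmetry $\mu\in W$, with $(x\otimes_\alpha y)^{\theta^{-1}}=y\otimes_\alpha\mu x$, whence $\theta^2(x\otimes_\alpha y)=x\otimes_\alpha\lmb^\alpha\lmb y$ and $\theta^{-2}(x\otimes_\alpha y)=x\otimes_\alpha\mu^\alpha\mu y$. Composing these and invoking right injectivity turns the identities $\theta^2\theta^{-2}=\theta^{-2}\theta^2=\id_{K_\alpha}$ into the operator equations $(\lmb^\alpha\lmb)(\mu^\alpha\mu)=(\mu^\alpha\mu)(\lmb^\alpha\lmb)=1$ in $W$, so $\lmb^\alpha\lmb$ is invertible and $\lmb$ is as in the statement. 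Finally, if $\theta$ is an involution then $\theta^2=\id_{K_\alpha}$ gives $x\otimes_\alpha(\lmb^\alpha\lmb-1)y=0$ for all $x,y$, and right injectivity yields $\lmb^\alpha\lmb=1$, matching the forward computation. The only delicate judgement throughout is recognizing that each vanishing $x\otimes_\alpha(\,\cdot\,)y=0$ is precisely a statement that $(\,\cdot\,)y$ lies in the right radical of $b_\alpha$, so that right injectivity upgrades it to the corresponding operator identity.
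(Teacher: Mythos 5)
Your proof is correct, and it follows the paper's argument almost step for step. The forward direction is identical: the same well-definedness check using $\lmb w=w^{\alpha\alpha}\lmb$, the same computation $\theta^2(x\otimes_\alpha y)=x\otimes_\alpha \lmb^\alpha\lmb\, y$, and the same verification that $\lmb^\alpha\lmb$ (hence its inverse) commutes with $\im(\alpha)$, so that right-slot multiplication by $(\lmb^\alpha\lmb)^{-1}$ is well defined and inverts $\theta^2$. The converse also matches the paper's route---extract $\lmb$ via Proposition~\ref{FORM:PR:basic-props-of-forms-I}(iii), use right regularity to upgrade $y\otimes_\alpha(\lmb w-w^{\alpha\alpha}\lmb)x=0$ to the operator identity $\lmb w=w^{\alpha\alpha}\lmb$, and read off $(x\otimes_\alpha y)^\theta=y\otimes_\alpha\lmb x$ from the asymmetry property---except at the single point where $\lmb^\alpha\lmb\in\units{W}$ must be established. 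There the paper computes $b_\alpha(x,y)^{\theta\theta}=b_\alpha(x,\lmb^\alpha\lmb y)$, rewrites this as $\rAd{b}\circ\lmb^\alpha\lmb=u_{\theta}\circ u_{\theta^{-1}}^{-1}\circ\rAd{b}$, and concludes invertibility from the bijectivity of $\rAd{b}$, $u_\theta$ and $u_{\theta^{-1}}$ (Proposition~\ref{FORM:PR:basic-props-of-forms-I}(i)); you instead apply part (iii) a second time, to the anti-automorphism $\theta^{-1}$, obtaining a right $\theta^{-1}$-asymmetry $\mu$ with $\theta^{-2}$ equal to right-slot multiplication by $\mu^\alpha\mu$, and then right injectivity converts $\theta^{2}\theta^{-2}=\theta^{-2}\theta^{2}=\id_{K_\alpha}$ into $(\lmb^\alpha\lmb)(\mu^\alpha\mu)=(\mu^\alpha\mu)(\lmb^\alpha\lmb)=1$. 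Both variants rest on the same proposition and the same right-injectivity principle you correctly isolate at the end; yours buys an explicit two-sided inverse $\mu^\alpha\mu$ for $\lmb^\alpha\lmb$ at the cost of introducing a second asymmetry, while the paper's avoids that by invoking the natural isomorphisms $u_\theta$, $u_{\theta^{-1}}$ directly. Neither direction has a gap.
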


    \begin{proof}
        Throughout, $w\in W$, $r\in R$ and $x,y\in M$.
        The map $\theta$ is well defined since
        \[
        (wx\otimes_\alpha  y)^\theta=
        y\otimes_\alpha \lmb wx =
        y\otimes_\alpha w^{\alpha\alpha}\lmb x=
        w^\alpha y\otimes_\alpha \lmb x=
        (x\otimes_\alpha w^\alpha y)^\theta\ .
        \]
        To see that $\theta$ is invertible, it is enough to check that $\theta^2$ is invertible. This
        holds since
        \begin{equation}\label{GEN:EQ:asymmetry-eq}
        (x\otimes_\alpha y)^{\theta\theta}=\lmb x\otimes_\alpha \lmb y=x\otimes_\alpha \lmb^\alpha\lmb y
        \end{equation}
        and the map $x\otimes_\alpha y\mapsto x\otimes_\alpha \lmb^\alpha\lmb y$
        has an inverse given by $x\otimes_\alpha y\mapsto x\otimes_\alpha (\lmb^\alpha\lmb)^{-1} y$.
        (The latter is well defined since $\lmb^\alpha\lmb$, and hence $(\lmb^\alpha\lmb)^{-1}$, commutes with $\im(\alpha)$.
        Indeed, $\lmb^\alpha\lmb w^\alpha=\lmb^\alpha(w^\alpha)^{\alpha\alpha}\lmb=(w^{\alpha\alpha}\lmb)^\alpha\lmb=
        (\lmb w)^\alpha\lmb=w^\alpha\lmb^\alpha\lmb$.) That $(k\mul{i} r)^\theta=k^\theta\mul{1-i} r$
        for all $k\in K$
        is straightforward and hence $\theta$ is an anti-automorphism. In addition, \eqref{GEN:EQ:asymmetry-eq}
        also implies that $\theta$ is an involution if $\lmb^{\alpha}\lmb=1$.
        That $\lmb$ is a right $\theta$-asymmetry
        of $b_\alpha$ is routine.

        If $b_\alpha$ is right regular and $K_\alpha$ has an anti-isomorphism $\theta$, then
        by Proposition~\ref{FORM:PR:basic-props-of-forms-I}(iii),
        $b_\alpha$ has a right $\theta$-asymmetry $\lmb\in W$.
        Now, for all $w\in W$, we have
        \[
        b_\alpha(x,\lmb w y)=b_\alpha(wy, x)^{\theta}=b_\alpha(y,w^\alpha x)^{\theta}=b_\alpha(w^\alpha x,\lmb y)=
        b_\alpha(x,w^{\alpha\alpha}\lmb y)
        \]
        and since $b_\alpha$ is right regular, this implies $\lmb w=w^{\alpha\alpha} \lmb$.
        In addition, $b_\alpha(x,y)^{\theta\theta}=b_\alpha(y,\lmb x)^{\theta}=b_\alpha(\lmb x,\lmb y)=
        b_\alpha(x,\lmb^\alpha\lmb y)$. This means $\rAd{b}\circ \lmb^\alpha\lmb=u_{\theta}\circ u_{\theta^{-1}}^{-1}\circ\rAd{b}$, where
        $u_\theta=u_{\theta,M}$ is defined as in Proposition \ref{FORM:PR:basic-props-of-forms-I}. As $\rAd{b}$, $u_\theta$, $u_{\theta^{-1}}$
        are invertible, so is $\lmb^\alpha\lmb$. Furthermore, if $\theta$ is and involution, then $\theta=\theta^{-1}$ and we get
        $\lmb^\alpha\lmb=1$.
        Finally, the anti-automorphism $\theta$ is necessarily
        induced from $\lmb$ because
        \[(x\otimes_\alpha y)^\theta=b_\alpha(x,y)^\theta=b_\alpha(y,\lmb x)=y\otimes_\alpha\lmb x\ .\qedhere\]
    \end{proof}

    \begin{remark}\label{GEN:RM:lmb-is-not-invertible}
        The assumptions of Proposition \ref{GEN:PR:basic-properties-of-b-alpha-II} do not imply
        that $\lmb$ is invertible in $W$. An example demonstrating this will be published elsewhere.
        Nevertheless, $\lmb$ is invertible when $\alpha$ is bijective. Indeed, since $\lmb^\alpha\lmb\in \units{R}$,
        $\lmb$ is right invertible. The bijectivity of $\alpha$ implies the existence of an element
        $u\in W$ with $u^\alpha =\lmb$. We then have $\lmb u=u^{\alpha\alpha}\lmb=\lmb^\alpha\lmb\in\units{R}$,
        so $\lmb$ is also left invertible.
    \end{remark}

    \begin{cor}\label{GEN:CR:right-regular-iff-left-regular}
        If $\alpha\in\aEnd(W)$ and $\alpha^2$ is inner, then
        $b_\alpha$ is right regular (injective) if and only if $b_\alpha$ is left regular (injective).
    \end{cor}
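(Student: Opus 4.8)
The plan is to derive the statement by combining Proposition~\ref{GEN:PR:basic-properties-of-b-alpha-II} with Proposition~\ref{FORM:PR:basic-props-of-forms-I}, the linchpin being that the hypothesis $\alpha^2\in\Inn(W)$ secretly forces $\alpha$ itself to be bijective. Indeed, every inner automorphism is in particular a bijection, so $\alpha^2$ is both injective and surjective as a map $W\to W$. Injectivity of $\alpha^2$ immediately yields injectivity of $\alpha$ (if $w^\alpha=v^\alpha$ then $w^{\alpha\alpha}=v^{\alpha\alpha}$, so $w=v$), and surjectivity of $\alpha^2$ yields surjectivity of $\alpha$ (any $z$ equals $\alpha^2(x)=\alpha(\alpha(x))$ for some $x$). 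Hence $\alpha\in\aAut(W)$.

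Next I would invoke Proposition~\ref{GEN:PR:basic-properties-of-b-alpha-II}. Since $\alpha^2\in\Inn(W)$, it supplies an element $\lmb\in W$ with $w^{\alpha\alpha}\lmb=\lmb w$ for all $w\in W$ and $\lmb^\alpha\lmb\in\units{W}$, together with an anti-automorphism $\theta$ of $K_\alpha$ given by $(x\otimes_\alpha y)^\theta=y\otimes_\alpha\lmb x$, for which $\lmb$ is a right $\theta$-asymmetry of $b_\alpha$. This half of the proposition is unconditional and does not presuppose right regularity of $b_\alpha$, so it is available to us here. Because $\alpha$ is bijective by the previous paragraph, Remark~\ref{GEN:RM:lmb-is-not-invertible} applies and upgrades $\lmb$ to a genuine unit: $\lmb\in\units{W}$.

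Finally I would feed this into Proposition~\ref{FORM:PR:basic-props-of-forms-I}. Part~(i) gives that $u_{\theta,M}\colon M^{[0]}\to M^{[1]}$ is an isomorphism of right $R$-modules, and part~(ii), applied to the right $\theta$-asymmetry $\lmb$, yields the identity
\[
u_{\theta,M}\circ\lAd{b_\alpha}=\rAd{b_\alpha}\circ\lmb\ ,
\]
whence $\lAd{b_\alpha}=u_{\theta,M}^{-1}\circ\rAd{b_\alpha}\circ\lmb$. Since $u_{\theta,M}^{-1}$ is an isomorphism and $\lmb$ is an automorphism of $M$, the maps $\lAd{b_\alpha}$ and $\rAd{b_\alpha}$ are injective together and bijective together. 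Recalling that right (respectively left) regularity of $b_\alpha$ means bijectivity of $\rAd{b_\alpha}$ (respectively $\lAd{b_\alpha}$), and analogously for injectivity, this delivers exactly the claimed equivalences.

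The only delicate point, and the step I would watch most carefully, is the invertibility of $\lmb$. Remark~\ref{GEN:RM:lmb-is-not-invertible} explicitly warns that a right $\theta$-asymmetry need not be invertible in general, so it is essential to exploit $\alpha^2\in\Inn(W)$ to first conclude $\alpha\in\aAut(W)$. Merely knowing $\lmb^\alpha\lmb\in\units{W}$ gives only that $\lmb$ is right invertible, which would transfer injectivity of $\rAd{b_\alpha}$ to $\lAd{b_\alpha}$ but not conversely; with $\lmb$ a two-sided unit, both directions of both equivalences follow at once, and the rest is bookkeeping against the definitions.
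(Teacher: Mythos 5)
Your proof is correct and essentially the paper's own: both arguments obtain $\lmb$ and the anti-automorphism $\theta$ of $K_\alpha$ from Proposition~\ref{GEN:PR:basic-properties-of-b-alpha-II}, and then transfer injectivity/bijectivity between $\lAd{b_\alpha}$ and $\rAd{b_\alpha}$ via the identity $u_{\theta,M}\circ \lAd{b_\alpha}=\rAd{b_\alpha}\circ\lmb$ from Proposition~\ref{FORM:PR:basic-props-of-forms-I}(ii), using that $u_{\theta,M}$ and $\lmb$ are invertible. The one difference is cosmetic: the paper secures an invertible $\lmb$ immediately by taking it to be the unit implementing the inner automorphism $\alpha^2$ (so $w^{\alpha\alpha}=\lmb w\lmb^{-1}$, whence $w^{\alpha\alpha}\lmb=\lmb w$ and $\lmb^\alpha\lmb\in\units{W}$), so your detour through the bijectivity of $\alpha$ and Remark~\ref{GEN:RM:lmb-is-not-invertible}, while valid, is not needed.
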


    \begin{proof}
        By assumption, there is an \emph{invertible} $\lmb\in W$ such
        that $w^{\alpha\alpha}=\lmb w\lmb^{-1}$ for all $w\in W$. In particular,
        this means $w^{\alpha\alpha}\lmb =\lmb w$ and $\lmb^\alpha\lmb\in \units{W}$,
        hence by
        Proposition~\ref{GEN:PR:basic-properties-of-b-alpha-II}, $K_\alpha$ has an involution $\theta$
        and $\lmb$ is a $\theta$-asymmetry of $b_\alpha$. By Proposition~\ref{FORM:PR:basic-props-of-forms-I}(ii),
        $u_{\theta,M}\circ \lAd{b}=\rAd{b}\circ \lmb$ and since $u_{\theta,M}$ and $\lmb$ are invertible, it follows
        that $\rAd{b}$ is regular (injective) if and only if $\lAd{b}$ is.
    \end{proof}

\section{Conditions That Imply $b_\alpha$ Is Right Regular}
\label{section:GEN:regularity}

    Let $R$, $M$ and $W$ be as in  section \ref{section:GEN:preface}.
    In this section, we present conditions on $R$, $M$, $W$ and $\alpha$ that ensure
    $b_\alpha$ is right regular, as well as
    other
    supplementary results. In particular, we establish the correspondence in \eqref{GEN:EQ:correspondence-I}
    in case $M$ is a progenerator.

    \medskip

    Assume momentarily that $W$ and $R$ are arbitrary rings and let
    $\biMod{W}{R}$ denote the category of $(W,R)$-bimodules.
    Let $A\in \rMod{W}$, $B\in\rMod{R}$ and $C\in \biMod{W}{R}$. Then
    $\Hom_R(B,C)$ is a right $W$-module w.r.t.\ the action $(fw)m=f(wm)$ (where $f\in\Hom_R(B,C)$,
    $w\in W$ and $m\in M$) and there is a natural group homomorphism
    \[
    \Gamma=\Gamma_{A,B,C}:\,A\otimes_W \Hom_R(B,C)  \to  \Hom_R(B,A\otimes_W C) \\
    \]
    given by $(\Gamma(a\otimes f))b=a\otimes f(b)$ for all $f\in \Hom_R(B,C)$, $a\in A$ and $b\in B$.

    Now assume $M\in\rMod{R}$, $W=\End(M_R)$ and $\alpha\in \aEnd(W)$. Then $M$ can be viewed
    as a $(W,R)$-bimodule.
    Therefore, we have a map
    \[\Gamma=\Gamma_{M^\alpha,M,M}:M^\alpha\otimes_W\Hom_R(M,M)\to \Hom_R(M,M^\alpha\otimes_W M)\]
    (see Remark \ref{GEN:RM:twisting-remark} for the definition of $M^\alpha$).
    In addition, since $M^\alpha$ has an ``unused'' right $R$-module
    structure, $M^\alpha\otimes_W\Hom_R(M,M)$ and $\Hom_R(M,M^\alpha\otimes_W M)$
    can be considered as right $R$-modules and $\Gamma$ becomes an $R$-module homomorphism.
    The following lemma shows that up to certain identifications, $\Gamma$  is actually $\rAd{b_\alpha}$.

    \begin{lem} \label{GEN:LM:main-lemma-II}
        In the previous notation, there is a commutative diagram of right $R$-modules
        \[
        \xymatrix{
        M^\alpha\otimes_W\Hom_R(M_R,M_R)\ar[r]^{\Gamma}\ar[d]^\psi & \Hom_R(M,M^\alpha\otimes_W M)\ar[d]^{\vphi} \\
        M \ar[r]^{\rAd{b_\alpha}} & M^{[1]}
        }
        \]
        where $M^{[1]}=\Hom_R(M,(K_\alpha)_0)$ and $\psi$, $\vphi$ are bijective.
    \end{lem}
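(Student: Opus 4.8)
The plan is to produce $\psi$ and $\vphi$ as the evident canonical isomorphisms and then to reduce commutativity to the single defining relation of $\otimes_\alpha$. First I would note that $\Hom_R(M,M)=W$, so the source of $\Gamma$ is $M^\alpha\otimes_W W$, and I would take $\psi$ to be the canonical isomorphism $M^\alpha\otimes_W W\xrightarrow{\ \sim\ }M^\alpha=M$ of right $W$-modules, namely $a\otimes w\mapsto a\twistmul{\alpha}w=w^\alpha a$, with inverse $m\mapsto m\otimes\id_M$ (using $\id_M=1_W$ and $1_W^\alpha=1_W$). Since each $w^\alpha\in W=\End(M_R)$ is right $R$-linear, $\psi$ intertwines the unused right $R$-action carried by the first tensor factor with the $R$-action on $M$, so it is an isomorphism of right $R$-modules.

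Next, by Remark~\ref{GEN:RM:twisting-remark} there is a canonical identification $M^\alpha\otimes_W M\cong M\otimes_\alpha M=K_\alpha$ under which $a\otimes_W c\leftrightarrow c\otimes_\alpha a$, and I would define $\vphi$ to be $\Hom_R(M,-)$ applied to this identification (post-composition). The one genuinely delicate point—and the step I expect to require the most care—is to verify that $\vphi$ really lands in $M^{[1]}$ with the correct module structure, since $M^\alpha\otimes_W M$ carries \emph{two} right $R$-actions. One, inherited from the bimodule second factor $C=M_R$, is the action used to form the codomain $\Hom_R(M,M^\alpha\otimes_W M)$ of $\Gamma$; the other, the unused action on the first factor $M^\alpha$, is the one making this $\Hom$-group a right $R$-module. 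Tracking both through $a\otimes_W c\leftrightarrow c\otimes_\alpha a$, the former corresponds to $\mul{0}$ and the latter to $\mul{1}$. Hence the $R$-linearity defining $\Hom_R(M,M^\alpha\otimes_W M)$ becomes $\mul{0}$-linearity, matching $M^{[1]}=\Hom_R(M,(K_\alpha)_0)$, while the module structure on the $\Hom$-group matches $(hr)(m)=h(m)\mul{1}r$ on $M^{[1]}$. With this bookkeeping in place, $\vphi$ is an isomorphism of right $R$-modules because $\Hom_R(M,-)$ carries isomorphisms to isomorphisms.

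Finally I would check commutativity on a generator $a\otimes w$. On the left, $\rAd{b_\alpha}(\psi(a\otimes w))=\rAd{b_\alpha}(w^\alpha a)$ is the map $y\mapsto b_\alpha(y,w^\alpha a)=y\otimes_\alpha w^\alpha a$. On the right, $\Gamma(a\otimes w)$ is the map $y\mapsto a\otimes_W w(y)$, so $\vphi(\Gamma(a\otimes w))$ is $y\mapsto w(y)\otimes_\alpha a$. These coincide because the defining relation of $\otimes_\alpha$ gives $wy\otimes_\alpha a=y\otimes_\alpha w^\alpha a$ for all $y\in M$. Thus $\rAd{b_\alpha}\circ\psi=\vphi\circ\Gamma$, and the lemma follows. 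In short, the content is entirely in the $R$-action bookkeeping for $\vphi$; the construction of $\psi$ and the verification of commutativity are formal.
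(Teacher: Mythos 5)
Your proposal is correct and takes essentially the same route as the paper: the same $\psi$ (the canonical isomorphism $M^\alpha\otimes_W W\cong M^\alpha=M$, $a\otimes w\mapsto w^\alpha a$), the same $\vphi$ (post-composition with the isomorphism $x\otimes_W y\mapsto y\otimes_\alpha x$ onto $K_\alpha$), and the same commutativity check on generators via the relation $wy\otimes_\alpha a=y\otimes_\alpha w^\alpha a$. The careful bookkeeping you do on the two right $R$-actions (matching $\mul{0}$ for the $\Hom$-linearity and $\mul{1}$ for the module structure on $M^{[1]}$) is precisely the verification the paper leaves implicit, so nothing is missing.
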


    \begin{proof}
        Let $\psi$ be the identity map $M^\alpha\to M$ (recall that $M^\alpha=M$
        as sets) composed on the standard isomorphism
        \[M^\alpha\otimes_W\Hom(M_R,M_R)=M^\alpha\otimes_W W\cong M^\alpha\ .\]
        Then $\psi$ is given by $\psi(m\otimes_W w)=m\twistmul{\alpha}w=w^\alpha m$  and its inverse is $m\mapsto m\otimes 1$.
        The map $\vphi$ is defined by $\vphi(f)=\delta \circ f$ where $\delta$ is the isomorphism
        $M^\alpha\otimes_W M\to K_\alpha$ given by $x\otimes_W y\mapsto y\otimes_\alpha x$.
        The diagram commutes since
        \begin{eqnarray*}
        (\rAd{b_\alpha}(\psi (x\otimes_W w)))y&=&
        (\rAd{b_\alpha}(w^\alpha x))y=
        b_\alpha(y,w^\alpha x)=
        y\otimes_\alpha w^\alpha x=wy\otimes_\alpha x
        \\
        &=&
        \delta(x\otimes_W wy)=
        \delta((\Gamma(x\otimes_W w))y)=
        (\vphi(\Gamma(x\otimes_W w)))y
        \end{eqnarray*}
        for all $x,y\in M$ and $w\in W$.
    \end{proof}

    It is now of interest to find sufficient conditions for $\Gamma$ to be bijective, or at lest injective.
    This is done in the following lemma. \rem{(Note that the examples of section \ref{section:GEN:examples} below show that
    $\rAd{b_\alpha}$ need not be injective nor surjective and hence so does $\Gamma$).}

    \begin{lem} \label{GEN:LM:main-lemma-I}
        Let $A\in \rMod{W}$, $B\in\lMod{R}$ and $C\in\biMod{W}{R}$. Then:
        \begin{enumerate}
            \item[(i)] If one of the following holds:
            \begin{enumerate}
                \item[(a)] $A$ is finitely generated (abbrev.: f.g.) projective,
                \item[(b)] $A$ is projective and $B$ is f.g.,
                \item[(c)] $B$ is f.g.\ projective,
                \item[(d)] $B$ is projective and $A$ is finitely presented (abbrev.: f.p.).
            \end{enumerate}
            Then $\Gamma$ is bijective.
            \item[(ii)] If $A$ is projective, then $\Gamma$ is injective.
            \item[(iii)] If $B$ is projective and $A$ is f.g., then $\Gamma$ is surjective.
            \item[(iv)] If there is an exact sequence $A_1\to A_0\to A\to 0$ and $B$ is projective, then:
            \begin{enumerate}
                \item[(a)] $\Gamma_{A_0,B,C}$ is surjective $\derives$ $\Gamma_{A,B,C}$ is surjective.
                \item[(b)] $\Gamma_{A_0,B,C}$ is bijective and $\Gamma_{A_1,B,C}$ is surjective $\derives$ $\Gamma_{A,B,C}$ is bijective.
            \end{enumerate}
            \item[(v)] If there is an exact sequence $0\to A\to A_0\to A_1$ and $\Hom_R(B,C)$ is flat (in $\lMod{W}$), then:
            \begin{enumerate}
                \item[(a)] $\Gamma_{A_0,B,C}$ is injective $\derives$ $\Gamma_{A,B,C}$ is injective.
                \item[(b)] $\Gamma_{A_0,B,C}$ is bijective, $\Gamma_{A_1,B,C}$ is injective and ${}_WC$ is flat $\derives$ $\Gamma_{A,B,C}$ is bijective.
            \end{enumerate}
            \item[(vi)] If there is an exact sequence $B_1\to B_0\to B\to 0$ and $A$ is flat, then:
            \begin{enumerate}
                \item[(a)] $\Gamma_{A,B_0,C}$ is injective $\derives$ $\Gamma_{A,B,C}$ is injective.
                \item[(b)] $\Gamma_{A,B_0,C}$ is bijective and $\Gamma_{A,B_1,C}$ is injective $\derives$ $\Gamma_{A,B,C}$ is bijective.
            \end{enumerate}
        \end{enumerate}
        In particular, this implies that:
        \begin{enumerate}
            \item[(vii)] If $A$ embeds in a free module and $\Hom_R(B,C)$ is flat, then $\Gamma$ is injective.
            \item[(viii)] If $A$ embeds in a flat module, $B$ is f.g.\ and $\Hom_R(B,C)$ is flat, then $\Gamma$ is injective.
            \item[(ix)] If $A$ is flat and $B$ is f.p., then $\Gamma$ is bijective.
        \end{enumerate}
    \end{lem}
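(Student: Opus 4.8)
The plan is to treat $\Gamma_{A,B,C}$ as the classical tensor--hom comparison map and reduce everything to its behaviour on free modules together with naturality and diagram chases. The two base computations are these. Fixing $B,C$ and regarding $\Gamma$ as a natural transformation between the additive functors $F_1(A)=A\otimes_W\Hom_R(B,C)$ and $F_2(A)=\Hom_R(B,A\otimes_W C)$ of $A\in\rMod{W}$, one checks that for $A=W$ both functors canonically reduce to $\Hom_R(B,C)$ and $\Gamma$ is the identity; by additivity $\Gamma_{W^n}$ is then bijective for every finite free $A$, while for an arbitrary free $A=W^{(I)}$ the map $\Gamma$ becomes the canonical comparison $\Hom_R(B,C)^{(I)}\to\Hom_R(B,C^{(I)})$, which is always injective and is bijective as soon as $B$ is finitely generated (a homomorphism out of a finitely generated module lands in a finite sub-sum). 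Running the same reasoning in the variable $B$, the case $B=R$ (hence finite free $B$) again makes $\Gamma$ an identity; this is what drives the cases where $B$ is finitely generated projective.

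From these base cases I would obtain part (i) by passing to summands: if $A$ (resp.\ $B$) is finitely generated projective it is a summand of a finite free module and $\Gamma$ is a direct summand of a bijection, giving (a) and (c); for (b) one uses that $A$ is a summand of a possibly infinite free module while $B$ finitely generated keeps $\Gamma_{W^{(I)}}$ bijective; and for (d), projectivity of $B$ makes $\Hom_R(B,-)$ exact, so $F_2$ is right exact in $A$, and a finite presentation $W^m\to W^n\to A\to 0$ together with a five-lemma argument upgrades bijectivity from finite free modules to finitely presented $A$. Parts (ii) and (iii) are immediate variants of the same idea: (ii) is the injectivity of $\Gamma$ for free $A$ transported to a summand, and (iii) follows because both $F_1$ and $F_2$ are right exact (using $B$ projective) applied to $W^n\twoheadrightarrow A$.

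The propagation statements (iv)--(vi) are pure diagram chases once the correct exactness is arranged. For (iv) both $F_1$ and $F_2$ are right exact in $A$ (again $B$ projective), so the ladder coming from $A_1\to A_0\to A\to 0$ has right-exact rows and the four- and five-lemmas give (a) and (b). For (v) the flatness of $\Hom_R(B,C)$ as a left $W$-module makes $F_1$ \emph{left} exact in $A$, and flatness of ${}_WC$ additionally makes $F_2$ left exact via $0\to A\otimes_W C\to A_0\otimes_W C\to A_1\otimes_W C$; applying these to $0\to A\to A_0\to A_1$ and chasing yields the injectivity and bijectivity statements. Part (vi) is the exact transpose in $B$: viewing $G_1(B)=A\otimes_W\Hom_R(B,C)$ and $G_2(B)=\Hom_R(B,A\otimes_W C)$ as left-exact contravariant functors of $B$ --- where flatness of $A$ is precisely what keeps $G_1$ left exact --- the sequence $B_1\to B_0\to B\to 0$ produces the ladder whose chase gives (a) and (b). Finally (vii)--(ix) are assembled from the above: embed $A$ in a free (resp.\ flat) module and apply (v)(a) after first securing injectivity or bijectivity of $\Gamma$ on the ambient module, using (vi) and the finite-free base cases; (ix) in particular combines a finite presentation of $B$ with (vi)(b).

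The only genuinely delicate point, which I would write out most carefully, is the bookkeeping of exactness hypotheses so that each ladder actually has the rows the chase requires: whether $F_1,F_2$ (or $G_1,G_2$) are left- or right-exact depends on whether $B$ is projective, whether $\Hom_R(B,C)$ or ${}_WC$ is flat, and whether $A$ is flat, and it is easy to slip into invoking a chase where one row is not in fact exact. Once the functors are correctly identified, the chases are routine applications of the four and five lemmas, and no tool beyond the behaviour of $\Gamma$ on free modules is needed.
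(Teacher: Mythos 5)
Your proposal is correct and follows essentially the same route as the paper: reduce by additivity to free modules, compute $\Gamma$ explicitly there (for $A$ free, the map $\Hom_R(B,C)^{(I)}\to\Hom_R(B,C^{(I)})$; for $B$ f.g.\ free, the identity up to canonical isomorphism), then propagate via four/five-lemma ladder chases with the exactness hypotheses allocated exactly as the statement requires, and assemble (vii)--(ix) from (i), (v), (vi) just as the paper does. The only inessential divergence is in (i)(d) and (iii), where the paper takes $B$ free and cites Lam's facts about the canonical map $A\otimes_W\prod_{i\in I} C\to\prod_{i\in I}(A\otimes_W C)$ (surjective for f.g.\ $A$, bijective for f.p.\ $A$), whereas you re-derive these cases from a presentation of $A$ together with the right-exactness chases of (iv), a self-contained but equivalent argument.
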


    \begin{proof}
        We prove (i), (ii) and (iii) together: Since $\Gamma=\Gamma_{A,B,C}$ is additive in $A,B,C$ (in
        the functorial sense), we may replace
        projective with free and f.g.\ projective with f.g.\ free.
        Assume $A=\bigoplus_{i\in I}W$, then $\Gamma$ becomes the standard
        map $\bigoplus_{i\in I}\Hom_R(B,C)\to\Hom_R(B,\bigoplus_{i\in I}C)$. This map is clearly injective,
        and provided $I$ is finite, it is bijective. In addition, it is also
        easy to verify it is surjective if $B$ is f.g. Now assume $B=\bigoplus_{i\in I}R$. Then
        $\Gamma$ becomes the standard map $\veps:A\otimes \prod_{i\in I}C\to \prod_{i\in I}(A\otimes C)$,
        which is bijective if $I$ is finite.
        In addition, by \cite[\S4F]{La99}, $\veps$ is surjective if $A$ is f.g.\ and
        bijective if $A$ is finitely presented.

        (iv) We have a commutative diagram with exact rows:
        \[
         \xymatrix{
         A_1\otimes \Hom_R(B,C) \ar[r] \ar[d]^{\Gamma_{A_1,B,C}} & A_0\otimes \Hom_R(B,C) \ar[r] \ar[d]^{\Gamma_{A_0,B,C}} &
         A\otimes \Hom_R(B,C) \ar[d]^{\Gamma_{A,B,C}} \ar[r] & 0 \\
         \Hom_R(B,A_1\otimes C) \ar[r]                 & \Hom_R(B,A_0\otimes C) \ar[r]               & \Hom_R(B,A\otimes C) \ar[r] & 0
         }
        \]
        (The bottom row is exact because $B$ is projective). Then (a) and (b) now follow from the Four Lemma and the Five Lemma, respectively.

        (v) and (vi) are very similar to (iv) and are left to the reader.

        (vii) Let $0\to A\to A_0 \to A_1$ be an exact sequence with $A_0$ free.
        Then $\Gamma_{A_0,B,C}$ is injective by (ii), hence $\Gamma_{A,B,C}$ is injective by (v), since $\Hom_R(B,C)$ is flat.

        (viii) Let $0\to A\to A_0 \to A_1$ be an exact sequence with $A_0$ flat
        and let $B_1\to B_0\to B\to 0$ be a projective resolution with $B_0$ finitely generated.
        Then $\Gamma_{A_0,B_0,C}$ is bijective by (i)-(c), hence  $\Gamma_{A_0,B,C}$
        is injective (by (vi), since $A_0$ is flat), so $\Gamma_{A,B,C}$ is injective (by (v), since $\Hom_R(B,C)$
        is flat).

        (ix) Let $B_1\to B_0\to B\to 0$ be an exact sequence with $B_1$ and $B_0$ being f.g.\ projective.
        Then $\Gamma_{A,B_1,C}$ and $\Gamma_{A,B_0,C}$ are bijective by (i)-(c), hence $\Gamma_{A,B,C}$ is bijective
        (by (vi), since $A$ is flat).
    \end{proof}

    \begin{prp}\label{GEN:PR:main-thm-base}
        Let $M\in\rMod{R}$, $W=\End(M_R)$ and $\alpha\in \aEnd(W)$. Then:
        \begin{enumerate}
            \item[(i)] If $M_R$ or $M^\alpha$ are f.g.\ projective, then $b_\alpha$ is right regular.
            \item[(ii)] If $M^\alpha$ is projective and $M_R$ is f.g., then $b_\alpha$ is right regular.
            \item[(iii)] If $M^\alpha$ embeds in a free right $W$-module, then $b_\alpha$ is right injective.
            \item[(iv)] If $M^\alpha$ embeds in flat right $W$-module and $M_R$ is f.g., then $b_\alpha$ is right injective.
        \end{enumerate}
    \end{prp}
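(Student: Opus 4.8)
The plan is to deduce all four statements directly from the two preceding lemmas, with essentially no extra work beyond bookkeeping. Lemma~\ref{GEN:LM:main-lemma-II} identifies $\rAd{b_\alpha}$ with the comparison map $\Gamma=\Gamma_{M^\alpha,M,M}$ through the commutative square whose vertical maps $\psi$ and $\vphi$ are isomorphisms. Consequently $\rAd{b_\alpha}$ is bijective (resp.\ injective) if and only if $\Gamma$ is. Since, by definition, $b_\alpha$ is right regular exactly when $\rAd{b_\alpha}$ is bijective and right injective exactly when $\rAd{b_\alpha}$ is injective, it suffices to control $\Gamma$, and Lemma~\ref{GEN:LM:main-lemma-I} does precisely that. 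So the proof is a matter of matching each clause of the proposition to the appropriate part of Lemma~\ref{GEN:LM:main-lemma-I}, taking $A=M^\alpha\in\rMod{W}$, $B=M$, and $C=M$ (the latter viewed as a $(W,R)$-bimodule).

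First I would record the reduction above, and then run through the clauses. For (i): if $M^\alpha$ is f.g.\ projective this is Lemma~\ref{GEN:LM:main-lemma-I}(i)(a), while if $M_R$ is f.g.\ projective it is part (i)(c); in either case $\Gamma$ is bijective. For (ii), the hypotheses ``$M^\alpha$ projective and $M_R$ finitely generated'' are exactly those of Lemma~\ref{GEN:LM:main-lemma-I}(i)(b), again giving bijectivity of $\Gamma$, hence right regularity of $b_\alpha$. The injectivity clauses (iii) and (iv) rest on one observation: the flat module required in the hypotheses of Lemma~\ref{GEN:LM:main-lemma-I}(vii)--(viii), namely $\Hom_R(B,C)=\Hom_R(M,M)=\End(M_R)=W$, is free of rank one and therefore flat over $W$. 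Granting this, (iii) is Lemma~\ref{GEN:LM:main-lemma-I}(vii) applied to $A=M^\alpha$ embedding in a free $W$-module, and (iv) is Lemma~\ref{GEN:LM:main-lemma-I}(viii) applied to $A=M^\alpha$ embedding in a flat $W$-module with $B=M$ finitely generated; both conclude that $\Gamma$, and hence $\rAd{b_\alpha}$, is injective.

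The only point requiring genuine care — and the one easiest to slip on — is the bookkeeping of which module structures are in play: $M$ simultaneously carries its $R$-action and, through $W=\End(M_R)$, a $W$-action, while $A=M^\alpha$ is the $\alpha$-twist of $M$ rather than $M$ itself, so the hypotheses on $M^\alpha$ must not be silently confused with hypotheses on $M_R$. Once one verifies that $\Hom_R(M,M)$ is just $W$ regarded as a module over itself (hence flat, on either side), there is no remaining obstacle: each of the four statements collapses to invoking the correspondingly matched part of Lemma~\ref{GEN:LM:main-lemma-I}.
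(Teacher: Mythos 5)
Your proof is correct and follows essentially the same route as the paper's: reduce via Lemma~\ref{GEN:LM:main-lemma-II} to the bijectivity (resp.\ injectivity) of $\Gamma_{M^\alpha,M,M}$, observe that $\Hom_R(M,M)\cong {}_WW$ is flat, and invoke the matching clauses of Lemma~\ref{GEN:LM:main-lemma-I}. You are in fact slightly more careful than the paper's one-line proof, which cites only part (i)-(a) for clause (i) and leaves the case $M_R$ f.g.\ projective (your appeal to (i)-(c)) implicit.
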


    \begin{proof}
        By Lemma \ref{GEN:LM:main-lemma-II}, that $b_\alpha$ is bijective (injective)
        is equivalent to $\Gamma_{M^\alpha, M,M}$ being bijective (injective). Observe
        that $\Hom_R(M_R,{}_WM_R)\cong {}_WW$  and hence $\Hom_R(M,M)$ is flat. Parts (i)--(iv) of the corollary
        now follow from parts (i)-(a), (i)-(b), (vii), and (viii) of Lemma \ref{GEN:LM:main-lemma-I}, respectively.
    \end{proof}

    \begin{remark}
        Let $\alpha$ be an anti-\emph{auto}morphism of $W$, then $M^\alpha$ is (resp.\ embeds in)
        a free/projective $W$-module if and only if $M$ is. Since any flat module is a direct limit of
        f.g.\ free modules (see \cite{Laz64}) and twisting commutes with direct limits, the previous assertion holds
        upon replacing ``free'' with ``flat''.
    \end{remark}

    As an immediate corollary, we get:

    \begin{cor}\label{GEN:CR:main-thm-I}
        If $M_R$ is f.g.\ projective or f.g.\ semisimple, then $b_\alpha$ is right regular
        for all $\alpha\in\aEnd(W)$. In particular, there exists a one-to-one correspondence
        between $\genBil{M}/\!\sim$ and $\aEnd(W)$ as in \eqref{GEN:EQ:correspondence-adjusted-II}.
    \end{cor}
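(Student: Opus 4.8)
The plan is to reduce both cases directly to Proposition~\ref{GEN:PR:main-thm-base} and then invoke Remark~\ref{GEN:RM:dfn-of-generic-forms} to obtain the correspondence. First, if $M_R$ is finitely generated projective there is nothing to do: part~(i) of Proposition~\ref{GEN:PR:main-thm-base} already asserts that $b_\alpha$ is right regular for every $\alpha\in\aEnd(W)$, since that hypothesis is a condition on $M_R$ alone and imposes no requirement on $M^\alpha$.

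The substantive case is $M_R$ finitely generated semisimple. Here I would first observe that $W=\End(M_R)$ is a semisimple (artinian) ring: writing $M_R$ as a finite direct sum of simple modules and applying Schur's lemma identifies $W$ with a finite product of matrix rings over division rings. Consequently every right $W$-module is projective; in particular the twisted module $M^\alpha$ is a projective right $W$-module. Since $M_R$ is also finitely generated, the hypotheses of Proposition~\ref{GEN:PR:main-thm-base}(ii) are satisfied, and that proposition yields that $b_\alpha$ is right regular. Note that this argument works uniformly for \emph{all} $\alpha\in\aEnd(W)$, precisely because the projectivity of $M^\alpha$ is automatic once $W$ is semisimple and does not depend on $\alpha$ being bijective.

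Having established in both cases that $b_\alpha$ is right regular for every $\alpha\in\aEnd(W)$, the ``in particular'' clause is immediate: this is exactly the standing hypothesis under which Remark~\ref{GEN:RM:dfn-of-generic-forms} produces the one-to-one correspondence \eqref{GEN:EQ:correspondence-adjusted-II} between $\genBil{M}/\!\sim$ and $\aEnd(W)$, via $b\mapsto\alpha(b)$ and $\alpha\mapsto b_\alpha$. I do not expect any genuine obstacle: the only point requiring a word of care is the structural fact that the endomorphism ring of a finitely generated semisimple module is semisimple, which is what converts the mere finite generation of $M_R$ into projectivity of $M^\alpha$ and thereby brings the semisimple case under the umbrella of Proposition~\ref{GEN:PR:main-thm-base}(ii).
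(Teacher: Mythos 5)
Your proposal is correct and follows essentially the same route as the paper's own proof: the f.g.\ projective case is immediate from Proposition~\ref{GEN:PR:main-thm-base}(i), and in the f.g.\ semisimple case you deduce semisimplicity of $W=\End(M_R)$, hence projectivity of $M^\alpha$, and apply Proposition~\ref{GEN:PR:main-thm-base}(ii), with the correspondence then coming from Remark~\ref{GEN:RM:dfn-of-generic-forms}. The only difference is that you spell out the Schur's-lemma structure argument for $W$, which the paper leaves implicit.
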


    \begin{proof}
        The f.g.\ projective case follows from Proposition~\ref{GEN:PR:main-thm-base}(i).
        In case $M_R$ is f.g.\ semisimple, $W$ is semisimple. Therefore, $M^\alpha$ is projective, and the
        corollary follows from Proposition~\ref{GEN:PR:main-thm-base}(ii).
    \end{proof}

    For the next results, recall that the module $M$ is
    a \emph{generator} if every right $R$-module
    is an epimorphic image of $\bigoplus_{i\in I}M$ for some $I$. Equivalently,
    $M$ is a generator if $R_R$ is summand of $M^n$ for some $n\in\N$; see \cite[\S18B]{La99}. The module $M$ is
    a \emph{progenerator} if it is a generator, projective and finitely generated.

    \begin{lem}\label{GEN:LM:when-regular-implies-generic}
        Assume $M$ is a generator and let $(M,b,K)$ be a right regular bilinear space
        with $\alpha=\alpha(b)$. If $b_\alpha$ is right regular, then $b\sim b_\alpha$.
    \end{lem}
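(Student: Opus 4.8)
The plan is to exhibit the candidate similarity directly via the universal property and then verify it is an isomorphism using the generator hypothesis. Since $\alpha=\alpha(b)$, the form $b$ satisfies $b(wx,y)=b(x,w^\alpha y)$ for all $w\in W$, so the universal property of $(b_\alpha,K_\alpha)$ established in Section~\ref{section:GEN:preface} supplies a unique double $R$-module homomorphism $f\colon K_\alpha\to K$ with $b=f\circ b_\alpha$, namely $f(x\otimes_\alpha y)=b(x,y)$. This $f$ is the only possible similarity, so the entire content of the lemma reduces to the claim that $f$ is bijective.

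To control $f$, I would compare it with the right adjoints. Write $f_0$ for $f$ regarded as a map $(K_\alpha)_0\to K_0$ of right $R$-modules, which makes sense because $f$ respects $\mul{0}$. Post-composition then yields a map $\Hom_R(M,f_0)\colon \Hom_R(M,(K_\alpha)_0)\to\Hom_R(M,K_0)$, that is, $M^{[1]}_{K_\alpha}\to M^{[1]}_{K}$ (recall $M^{[1]}=\Hom_R(M,K_0)$). A one-line evaluation shows
\[
\Hom_R(M,f_0)\circ \rAd{b_\alpha}=\rAd{b},
\]
since both sides send $x$ to the functional $y\mapsto b(y,x)$ (using $f(y\otimes_\alpha x)=b(y,x)$). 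As $b$ is right regular by hypothesis and $b_\alpha$ is right regular by assumption, both $\rAd{b}$ and $\rAd{b_\alpha}$ are bijective, whence $\Hom_R(M,f_0)$ is bijective as well.

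It remains to descend this from $\Hom_R(M,-)$ to $f$ itself, and this is the step that uses the generator hypothesis and is the crux of the argument. Since $M$ is a generator, $R_R$ is a direct summand of $M^n$ for some $n$, say $M^n\cong R\oplus X$. The functor $\Hom_R(M,-)$ therefore reflects isomorphisms: if $\Hom_R(M,f_0)$ is bijective, then so is its $n$-fold direct sum $\Hom_R(M^n,f_0)$, which under the natural splitting $\Hom_R(M^n,-)\cong\Hom_R(R,-)\oplus\Hom_R(X,-)$ forces $\Hom_R(R,f_0)$ to be bijective; under the canonical identification $\Hom_R(R,N)\cong N$ this map is $f_0$ itself. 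Thus $f_0$, and hence $f$ as an additive map, is bijective, and a bijective double $R$-module homomorphism is an isomorphism in $\DRMod{R}$. Consequently $b=f\circ b_\alpha$ with $f$ an isomorphism, which is precisely $b\sim b_\alpha$.

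I expect the genuinely load-bearing step to be the last one. Without the generator hypothesis, bijectivity of $\Hom_R(M,f_0)$ need not propagate to $f_0$, as $\Hom_R(M,-)$ can fail to see the part of $K$ not reached by $M$; this is consistent with the failures of the correspondence for general modules exhibited in Section~\ref{section:GEN:examples}. Everything preceding it is the formal universal-property bookkeeping together with the identity relating $f$ to the adjoints.
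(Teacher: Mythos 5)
Your proof is correct, and it verifies the key point by a different mechanism than the paper. The paper constructs the same universal map $f\colon K_\alpha\to K$ with $b=f\circ b_\alpha$ and then checks injectivity and surjectivity of $f$ separately by hand: for surjectivity it observes that the image of $f$ is $\im(b)$ and that, were $\im(b)\subsetneq K$, the generator hypothesis would force $\Hom_R(M,\im(b)_0)\subsetneq\Hom_R(M,K_0)$, contradicting the surjectivity of $\rAd{b}$; for injectivity it uses that (again since $M$ is a generator) $\ker f=0$ if and only if $\Hom_R(M,\ker f)=0$, writes any $\vphi\in\Hom_R(M,\ker f)$ as $\rAd{b_\alpha}x$ via the right regularity of $b_\alpha$, and concludes $x=0$ from the right injectivity of $b$. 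Your argument packages these two halves into the single identity $\Hom_R(M,f_0)\circ\rAd{b_\alpha}=\rAd{b}$ --- the paper's injectivity step is precisely the injectivity of $\Hom_R(M,f_0)$ unwound pointwise, and its surjectivity step the corresponding surjectivity --- and then isolates the generator hypothesis into the standard fact that $\Hom_R(M,-)$ reflects isomorphisms, proved via the retraction $R_R\mid M^n$ (your splitting $M^n\cong R\oplus X$, or equivalently just $\vphi\mapsto\vphi\pi$ and $\psi\mapsto\psi\iota$ for a fixed splitting pair). What your route buys is cleaner bookkeeping: the only inputs are visibly the universal property, bijectivity of the two right adjoints, and the reflection property, with no need for the trace argument ($M$ generating $K_0$) that underlies the paper's surjectivity step; the paper's version, in exchange, stays entirely elementwise and never invokes the summand decomposition explicitly. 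Your closing remarks are also accurate: the generator hypothesis enters exactly at the descent from $\Hom_R(M,f_0)$ to $f_0$, and a bijective morphism of double $R$-modules is indeed an isomorphism in $\DRMod{R}$, since the set-theoretic inverse automatically respects both actions.
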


    \begin{proof}
        By the universal property of $b_\alpha$, there exists a unique double $R$-module homomorphism
        $f:K_\alpha\to K$ such that $b=f\circ b_\alpha$. We first claim that $f$ is onto. Indeed,
        the image of $f$ is $\im(b)$, where $\im(b)$ is defined as in Remark \ref{GEN:RM:onto-remark}.
        That $M$ is a generator implies that if $\im(b)\subsetneq K$, then $\Hom_R(M,\im(b)_0)\subsetneq \Hom_R(M,K_0)$.
        Since $\rAd{b}$ takes values in the l.h.s., we get a contradiction to the right regularity of $b$, so equality
        must hold. Next, we claim that $f$ is injective. Indeed, since $M$ is a generator, $\ker f=0$
        if and only if $\Hom(M,\ker f)=0$, so it is enough to show the latter.
        Let
        $\vphi\in\Hom_R(M,\ker f)\subseteq\Hom_R(M,(K_\alpha)_0)$. Since $b_\alpha$
        is right regular, there exists $x\in M$ such that $b_\alpha(y,x)=\vphi(y)$ for all $y\in M$. Applying $f$ to both
        sides yields $b(y,x)=f(\vphi(y))=0$, which implies $x=0$ (since $b$ is right regular), hence $\vphi=0$.
    \end{proof}

    \begin{thm}\label{GEN:TH:main-thm-II}
        Assume $M$ is a generator (resp.\ progenerator), then the maps $b\mapsto \alpha(b)$ and
        $\alpha\mapsto b_\alpha$ induce a one-to-one correspondence between the regular (resp.\ right regular) bilinear
        forms on $M$, considered up to similarity, and the elements of $\aAut(W)$ (resp.\ $\aEnd(W)$).
    \end{thm}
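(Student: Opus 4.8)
The plan is to verify the two requirements (1) and (2) spelled out after \eqref{GEN:EQ:correspondence-I}: that $\alpha\mapsto b_\alpha$ actually lands among the relevant regular forms, and that $b\sim b_{\alpha(b)}$ for every such form $b$. Once both hold, the correspondence follows formally: the identity $\alpha(b_\alpha)=\alpha$ (valid whenever $b_\alpha$ is right regular) gives that $b\mapsto\alpha(b)$ is a left inverse of $\alpha\mapsto b_\alpha$, condition~(2) gives that it is a right inverse, and the observation after the definition of similarity (similar regular forms induce the same anti-endomorphism) shows $b\mapsto\alpha(b)$ is well defined on similarity classes. I would treat the progenerator and generator cases separately, as each controls a different class of forms.

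For the progenerator case, $M_R$ is finitely generated projective, so Corollary~\ref{GEN:CR:main-thm-I} gives that $b_\alpha$ is right regular for \emph{every} $\alpha\in\aEnd(W)$; this is condition~(1). Since a progenerator is in particular a generator and $b_{\alpha(b)}$ is right regular by the line just cited, Lemma~\ref{GEN:LM:when-regular-implies-generic} applied to any right regular $b$ yields $b\sim b_{\alpha(b)}$, which is condition~(2). Assembling these as above produces the bijection between $\aEnd(W)$ and the right regular forms modulo similarity.

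For the generator case the essential input is the Morita fact (\cite{La99}) that when $M_R$ is a generator the left module ${}_WM$ is finitely generated projective. Fix $\alpha\in\aAut(W)$; then $\alpha$ is bijective, so by the Remark following Proposition~\ref{GEN:PR:main-thm-base} the twist $M^\alpha$ is again finitely generated projective as a right $W$-module, and Proposition~\ref{GEN:PR:main-thm-base}(i) makes $b_\alpha$ right regular. Applying the same reasoning to $\alpha^{-1}\in\aAut(W)$ together with the left analogue of Proposition~\ref{GEN:PR:main-thm-base}(i) and the similarity $b_\alpha\sim{}_{\alpha^{-1}}b$ recorded at the end of Section~\ref{section:GEN:preface}, I obtain that $b_\alpha$ is also left regular. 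Hence $\alpha\mapsto b_\alpha$ carries $\aAut(W)$ into the two-sided regular forms, with $\alpha(b_\alpha)=\alpha$. Conversely, a regular form $b$ is both right and left regular, so (as observed after the definition of regularity in Section~\ref{section:FORM:definitions}) its corresponding anti-endomorphism $\alpha(b)$ is invertible, i.e.\ $\alpha(b)\in\aAut(W)$; then $b_{\alpha(b)}$ is right regular by the previous sentence, and Lemma~\ref{GEN:LM:when-regular-implies-generic} gives $b\sim b_{\alpha(b)}$. This yields the bijection between $\aAut(W)$ and the regular forms modulo similarity.

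The main delicate point is the generator case. Right regularity of $b_\alpha$ rests on the Morita identification of ${}_WM$ as finitely generated projective and on the invariance of projectivity under twisting, which the Remark following Proposition~\ref{GEN:PR:main-thm-base} guarantees only for \emph{bijective} $\alpha$; this is precisely why one must restrict to $\aAut(W)$ rather than all of $\aEnd(W)$ here. The second subtlety is deducing \emph{left} regularity of $b_\alpha$, for which I pass through the similarity $b_\alpha\sim{}_{\alpha^{-1}}b$ and the left analogue of the regularity criterion, rather than through Corollary~\ref{GEN:CR:right-regular-iff-left-regular}, since $\alpha^2$ need not be inner for a general anti-automorphism.
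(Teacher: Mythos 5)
Your proposal is correct and follows essentially the same route as the paper's own proof: Corollary~\ref{GEN:CR:main-thm-I} plus Lemma~\ref{GEN:LM:when-regular-implies-generic} in the progenerator case, and in the generator case the Morita fact that ${}_WM$ is f.g.\ projective, twisting by the bijective $\alpha$, Proposition~\ref{GEN:PR:main-thm-base}(i), and the left--right symmetry $b_\alpha\sim{}_{\alpha^{-1}}b$ from the end of Section~\ref{section:GEN:preface}, with $\alpha(b)\in\aAut(W)$ for regular $b$ closing the loop. Your explicit justifications (the Remark after Proposition~\ref{GEN:PR:main-thm-base} for twisting, and avoiding Corollary~\ref{GEN:CR:right-regular-iff-left-regular} since $\alpha^2$ need not be inner) merely spell out what the paper compresses into ``by symmetry.''
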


    \begin{proof}
        If $M$ is a progenerator, then $b_\alpha$ is right regular for all $\alpha\in\aEnd(W)$ by Corollary~\ref{GEN:CR:main-thm-I}.
        Lemma~\ref{GEN:LM:when-regular-implies-generic} then implies that
        $b_{\alpha(b)}\sim b$ for every regular bilinear form $b$, so we are done.

        If $M$ is a generator, then it
        is well-known that ${}_WM$ is f.g.\ projective (e.g.\ see
        \cite[Exer.\ 4.1.14]{Ro88}). Therefore, for every $\alpha\in\aAut(W)$, $M^\alpha_W$
        is also f.g.\ projective. Now, by Proposition~\ref{GEN:PR:main-thm-base}(i), $b_\alpha$ is right regular, and by
        symmetry (see the end of section \ref{section:GEN:preface}), $b_\alpha$ is also left regular.
        Again, Lemma~\ref{GEN:LM:when-regular-implies-generic} implies that
        $b_{\alpha(b)}\sim b$ for every regular bilinear form $b$. Finally, notice that every $\alpha(b)$ is an invertible
        for every regular bilinear form $b$ (see section~\ref{section:FORM:definitions}) and hence lies in $\aAut(W)$.
    \end{proof}

    We will slightly strengthen Theorem~\ref{GEN:TH:main-thm-II} in section~\ref{section:GEN:generators}.

    \begin{remark}
        Theorem~\ref{GEN:TH:main-thm-II} fails when $M$ is f.g., faithful and projective
        because
        Lemma~\ref{GEN:LM:when-regular-implies-generic} is no longer true
        (Example~\ref{GEN:EX:b-is-not-similar-to-b-alpha}).
        In addition, if $M$ is a generator (and not a progenerator),
        then $b_\alpha$ need not be right regular when $\alpha$ is not bijective (Examples~\ref{GEN:EX:example-IV}
        and~\ref{GEN:EX:example-V}).
    \end{remark}

\rem{
    \begin{thm}
        If $M_R$ is a generator
        and $\alpha$ is an anti-\emph{auto}morphism of $W$, then $b_\alpha$ is regular. In particular,
        there is a one-to-one correspondence between (left and right)  generic forms on $M$, considered up to
        similarity, and anti-\emph{auto}morphisms of $W$.
    \end{thm}

    \begin{proof}
        It is well known that ${}_WM$ is f.g.\ projective when $M_R$ is a generator (e.g.\ see
        \cite[Exer.\ 4.1.14]{Ro88}). Since $\alpha$ is bijective, $M^\alpha_W$
        is also f.g.\ projective, so we are done by Corollary \ref{GEN:CR:main-thm-base}(i).
        The form $b_\alpha$ is left regular by symmetry, as explained at the end of section \ref{section:GEN:preface}.
    \end{proof}

    It turn out that the assumptions of Theorem~\ref{GEN:TH:main-thm-II} also imply that every regular form is generic.
    This will be verified in Theorem \ref{GEN:TH:main-thm-IV} below.
    We also note that that it is essential to assume
    that $\alpha$ is bijective in Theorem \ref{GEN:TH:main-thm-II}; see Example \ref{GEN:EX:example-IV}.

    \smallskip
}

    The following example presents cases in which Theorem~\ref{GEN:TH:main-thm-II}
    can be applied to big families of right $R$-modules.

    \begin{example}
        (i) If $R$ is a Dedekind domain and $M_R$ is f.g.\ then $M$ is a generator when considered as an
        $R/\ann(M)$-module. (This follows from classification of f.g.\ modules over Dedekind domains;
        e.g.\ see \cite[Th.\ 4.14]{MaximalOrders}.) Therefore, the conclusions of Theorem~\ref{GEN:TH:main-thm-II}
        apply to $M$.

        (ii) A ring $R$ is called right \emph{pseudo-Frobenius} (abbrev.: PF) if any faithful right $R$-module is
        a generator. This is equivalent to $R$ being a right self-injective semilocal ring with essential right socle;
        see \cite[Th.\ 12.5.2]{Ka82ModulesAndRings} for other definitions.
        In this case, Theorem~\ref{GEN:TH:main-thm-II} applies to all faithful $R$-modules.
        Examples of two-sided PF rings include semisimple rings, artinian rings with a simple socle, Frobenius algebras,
        and finite group algebras over the previous examples;
        see \cite[Ch.\ 5]{La99}.
    \end{example}

\rem{
    We finish with presenting special cases in which $b_\alpha$ is guaranteed to be right regular
    or right injective
    for all $\alpha\in\aAut(W)$.\rem{ In some of the cases, we have a correspondence as in \eqref{GEN:EQ:correspondence-adjusted-II}.}

    \begin{example}
        (i) If $M$ is semisimple of finite length, then $W$ is semisimple. Therefore, $M^\alpha_W$ is projective
        for all $\alpha\in \aEnd(W)$, so
        by Lemma \ref{GEN:LM:main-lemma-I}(i) and Lemma \ref{GEN:LM:main-lemma-II}, $b_\alpha$ is right regular.

        (ii)
        If $M$ is quasi-injective\footnote{
            A module $M_R$ is quasi-injective if any homomorphism from  a submodule of $M$ to $M$ can be extended to
            an endomorphism of $M$. Any injective module is quasi-injective, but not vice versa. For example,
            $\Z/p^n\in\rMod{\Z}$ is quasi-injective but not injective for any prime $p\in\N$. See \cite[\S6G]{La99}.
        }
        nonsingular\footnote{
            A module $M_R$ is called nonsingular if $\ann_R(m)$ is not essential in $R_R$ for all $m$.
            For example, the nonsingular $\Z$-modules are precisely the torsion free modules.
            See \cite[\S7]{La99}.
        } of finite uniform dimension\footnote{
            The uniform dimension of a module $M_R$, denoted $\udim M_R$, is defined to be the largest
            $n$ such that $M$ contains a direct sum of $n$ nonzero modules. For example, any module containing
            an essential
            noetherian submodule has a finite uniform dimension.
            See \cite[\S6]{La99}.
        } (e.g.\ if $M$ is the injective
        envelope of nonsingular module of finite uniform dimension),
        then by \cite[Ths.\ 13.1 \& 13.3]{La99}, $W$ is semisimple, hence $W^\alpha_M$ is projective for all
        $\alpha\in\aEnd(M)$. As $M_R$ is not known to be f.g., we can only conclude that $b_\alpha$ is injective
        by Proposition~\ref{GEN:PR:main-thm-base}(ii).

        (iii) If $M$ is quasi-injective and nonsingular (e.g.\ an injective envelope of a nonsingular module),
        then \cite[Th.\ 13.1]{La99} implies that $W$ is von-Neumann regular. By \cite[Th.\ 4.21]{La99}, any
        $W$-module is flat, $M^\alpha_W$ in particular, so if $M$ is f.g.\ (resp.\ f.p.), then $b_\alpha$ is right injective (resp.\ regular).
    \end{example}
}

\section{Orthogonal Sums}
\label{section:GEN:orthogonal-sums}

    In this section, we define orthogonal sums and prove a result about how they
    interact with the map $\alpha\mapsto b_\alpha$ of section~\ref{section:GEN:preface}. This will be used in the next section.

    \medskip

    Let $K$ be a double $R$-module and let $(M,b,K)$ and $(M',b',K)$ be bilinear
    spaces. The \emph{orthogonal sum} $(M,b,K)\perp(M',b',K)$ is defined to be
    $(M\oplus M',b\perp b',K)$ where
    \[
    (b\perp b')(x\oplus x',y\oplus y')=b(x,y)+b'(x',y')\qquad\forall~x,y\in M,~x',y'\in M'\ .
    \]
    Observe that when viewed as submodules of $M\oplus M'$, $M$ and $M'$ satisfy
    \[(b\perp b')(M,M')=(b\perp b')(M',M)=0\ .\]
    Conversely, if $(b'',M'',K)$ is a bilinear space
    and there are submodules $M,M'\leq M''$ such that $M''=M'\oplus M$ and $b''(M,M')=b''(M',M)=0$,
    then $b''=b\perp b'$ with $b=b''|_{M\times M}$ and $b'=b''|_{M'\times M'}$.

    \begin{prp}\label{GEN:PR:orth-sum-regularity}
        Let $(M_1,b_1,K)$, $(M_2,b_2,K)$ be bilinear spaces. Then $b_1\perp b_2$ is right regular (injective)
        $\iff$ $b_1$ and $b_2$ are right regular (injective).
    \end{prp}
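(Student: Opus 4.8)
The plan is to set $b = b_1 \perp b_2$ and to identify $\rAd{b}$ with the direct sum $\rAd{b_1} \oplus \rAd{b_2}$, after which the claim reduces to the elementary fact that a direct sum of module homomorphisms is injective (bijective) precisely when each summand is.

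First, with $M = M_1 \oplus M_2$, I would use that the $[1]$-dual functor $\Hom_R(-,K_0)$ is additive and contravariant, so it carries the finite biproduct $M_1 \oplus M_2$ to a natural isomorphism $M^{[1]} \cong M_1^{[1]} \oplus M_2^{[1]}$; concretely, $f \in \Hom_R(M, K_0)$ corresponds to the pair $(f \circ \iota_1, f \circ \iota_2)$ of its restrictions along the canonical inclusions $\iota_i \colon M_i \hookrightarrow M$.

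Next, I would compute $\rAd{b}$ through this identification. For $x = x_1 \oplus x_2$, the restriction $(\rAd{b}\,x) \circ \iota_i$ sends $y_i$ to $b(\iota_i y_i, x) = b_i(y_i, x_i)$, the cross terms disappearing because of the built-in orthogonality $b(M_1, M_2) = b(M_2, M_1) = 0$. Hence $(\rAd{b}\,x) \circ \iota_i = \rAd{b_i}\,x_i$, which depends only on $x_i$; under the identification above, $\rAd{b}$ therefore becomes the block-diagonal map $\rAd{b_1} \oplus \rAd{b_2} \colon M_1 \oplus M_2 \to M_1^{[1]} \oplus M_2^{[1]}$.

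Finally, taking $g_i = \rAd{b_i}$ in the observation that $g_1 \oplus g_2$ is injective (resp.\ bijective) if and only if each $g_i$ is, I would conclude that $\rAd{b}$ is injective (resp.\ bijective) exactly when both $\rAd{b_1}$ and $\rAd{b_2}$ are --- that is, $b_1 \perp b_2$ is right injective (resp.\ right regular) if and only if $b_1$ and $b_2$ both are. The only step needing genuine care is the vanishing of the cross terms responsible for block-diagonality, which is precisely where the orthogonality hypothesis enters; the remainder is formal.
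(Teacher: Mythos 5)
Your proof is correct and follows exactly the paper's argument: identify $(M_1\oplus M_2)^{[1]}$ with $M_1^{[1]}\oplus M_2^{[1]}$ via restriction along the inclusions and check that $\rAd{b_1\perp b_2}=\rAd{b_1}\oplus\rAd{b_2}$, the cross terms vanishing by orthogonality. The paper leaves this verification as ``straightforward,'' while you carry it out explicitly, but the route is the same.
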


    \begin{proof}
        Identify $(M_1\oplus M_2)^{[1]}$ with $M_1^{[1]}\oplus M_2^{[1]}$ in the standard way (i.e.\ via\linebreak $f\mapsto (f|_{M_1},f|_{M_2})$).
        Then it is straightforward to check that $\rAd{b_1\perp b_2}=\rAd{b_1}\oplus \rAd{b_2}$. The proposition follows immediately.
    \end{proof}

    Fix a right $R$-module $M$ and let $W=\End_R(M)$.
    Then we can identify $\End_R(M^n)$ with $\nMat{W}{n}$ for all $n\in\N$ (the elements of $M^n$ are considered
    as column vectors). For every $\alpha\in\aEnd(W)$, let $T_n\alpha$ denote the anti-automorphism of
    $\nMat{W}{n}$ defined by:
    \[\left[\DotsArr{w_{11}}{w_{1n}}{w_{n1}}{w_{nn}}\right]^{T_n\alpha}=
    \left[\DotsArr{w_{11}^\alpha}{w_{n1}^\alpha}{w_{1n}^\alpha}{w_{nn}^\alpha}\right]\]
    (the notation $T_n\alpha$ means ``apply $n\times n$ transpose and then $\alpha$ coordinate-wise'').
    In addition, for a bilinear form, $b:M\times M\to K$,
    let
    \[
    n\cdot b:=\underbrace{b\perp\dots\perp b}_{\textrm{$n$ times}}:M^n\times M^n\to K\ .
    \]
    The rest of this section is devoted to showing that $b_{T_n\alpha}$ is always similar to $n\cdot b_\alpha$.

    \begin{lem}\label{GEN:LM:full-idempotent-lemma}
        Let $N\in\rMod{W}$, $M\in\lMod{W}$ and let $e\in W$ be an idempotent
        satisfying $WeW=W$.
        Define a group homomorphism $\vphi: Ne\otimes_{eWe} eM\to N\otimes_{W} M$
        by $\vphi(x\otimes_{eWe} y)= x\otimes_W y$. Then
        $\vphi$ is an isomorphism.
    \end{lem}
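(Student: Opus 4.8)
The plan is to recognize this as a Morita-theoretic statement: the hypothesis $WeW=W$ says that $e$ is a \emph{full} idempotent, so $W$ and $eWe$ are Morita equivalent and $\vphi$ ought to be the induced natural isomorphism. Rather than invoke Morita theory as a black box, I would prove directly that $\vphi$ is bijective, treating surjectivity and injectivity separately. Throughout I fix, using $WeW=W$, a finite expression $1=\sum_k a_k e b_k$ with $a_k,b_k\in W$; note that $a_ke\in We$ and $eb_k\in eW$. That $\vphi$ is a well-defined biadditive map is clear, since the defining relations of $Ne\otimes_{eWe}eM$ map into those of $N\otimes_W M$.

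For surjectivity I would argue directly. Given $x\in N$ and $y\in M$, inserting $1=\sum_k a_keb_k$ in the middle of $x\otimes_W y$ gives $x\otimes_W y=\sum_k xa_k\otimes_W eb_ky$. Since $eb_ky\in eM$ is fixed by left multiplication by $e$, I may slide an $e$ onto the left tensorand, obtaining $xa_k\otimes_W eb_ky=(xa_ke)\otimes_W(eb_ky)$. As $xa_ke\in Ne$ and $eb_ky\in eM$, the right-hand side equals $\vphi\big(\sum_k (xa_ke)\otimes_{eWe}(eb_ky)\big)$, so every elementary tensor in $N\otimes_W M$—and hence all of $N\otimes_W M$—lies in the image.

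For injectivity, which is the real content and the only place where fullness does essential work, I would build an explicit inverse by factoring $\vphi$ through natural isomorphisms. For an arbitrary idempotent one has $Ne\cong N\otimes_W We$ (via $xe\mapsto x\otimes e$ and $x\otimes we\mapsto xwe$) and $eM\cong eW\otimes_W M$ (via $em\mapsto e\otimes m$ and $ew\otimes m\mapsto ewm$); these are routine to verify and require no hypothesis on $e$. Combining them and reassociating identifies $Ne\otimes_{eWe}eM$ with $N\otimes_W(We\otimes_{eWe}eW)\otimes_W M$. The crux is then the claim that the multiplication map
\[
\mu\colon We\otimes_{eWe}eW\longrightarrow W,\qquad we\otimes ew'\mapsto wew',
\]
is an isomorphism of $(W,W)$-bimodules. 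Here $\mu$ is surjective precisely because its image is $WeW=W$, and I would exhibit the inverse $\nu(w)=\sum_k (wa_ke)\otimes_{eWe}(eb_k)$. The identity $\mu\nu=\id_W$ is immediate from $\sum_k a_keb_k=1$; the nontrivial direction $\nu\mu=\id$ is the main obstacle, and I would handle it using the factorization $wew'a_ke=(we)(ew'a_ke)$ to slide the $eWe$-element $ew'a_ke$ across the tensor, after which the sum collapses via $\sum_k a_keb_k=1$ back to $we\otimes ew'$.

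Finally I would assemble the pieces. Granting that $\mu$ is an isomorphism, the chain $N\otimes_W(We\otimes_{eWe}eW)\otimes_W M\cong N\otimes_W W\otimes_W M\cong N\otimes_W M$ is an isomorphism, and tracing a generator $xe\otimes_{eWe}ey$ through the whole composite yields $xe\otimes_W y$, which coincides with $\vphi(xe\otimes_{eWe}ey)=xe\otimes_W ey$ because $xe\cdot e=xe$. Thus the composite is exactly $\vphi$, proving it is an isomorphism. (Alternatively, the formula $\psi(x\otimes_W y)=\sum_k (xa_ke)\otimes_{eWe}(eb_ky)$ defines a two-sided inverse directly, but checking that it respects the $W$-balancing is essentially the same computation as $\nu\mu=\id$, so I prefer the composite route.)
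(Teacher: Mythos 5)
Your proposal is correct, and it packages the argument differently from the paper. The paper's proof is exactly your closing parenthetical: it writes $1_W=\sum_i u_iu_i'$ with $u_i\in We$, $u_i'\in eW$, defines $\psi(x\otimes_W y)=\sum_i xu_i\otimes_{eWe}u_i'y$ directly on $N\otimes_W M$, verifies $W$-balancedness by inserting $1=\sum_j u_ju_j'$ a second time and sliding the resulting $eWe$-elements $u_j'wu_i$ across the tensor, and then notes that $\psi=\vphi^{-1}$ is straightforward. Your preferred route instead isolates the Morita-context isomorphism $\mu\colon We\otimes_{eWe}eW\to W$ as the key lemma and recovers $\vphi$ by tensoring with $N$ and $M$ and reassociating; the computational engine ($1=\sum_k a_keb_k$ plus the sliding trick, which is precisely where $\nu\mu=\id$ and the paper's balancedness check coincide) is identical in both. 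What your version buys is modularity and conceptual transparency: the bimodule isomorphism $We\otimes_{eWe}eW\cong W$ is reusable and makes the role of fullness explicit, and $\nu$ being a bimodule map comes for free once it inverts the bimodule map $\mu$. What it costs is bookkeeping — you must set up the auxiliary isomorphisms $Ne\cong N\otimes_W We$ and $eM\cong eW\otimes_W M$, invoke associativity of tensor products over different rings, and trace a generator through the composite to identify it with $\vphi$ — whereas the paper's direct two-sided inverse does everything in one well-definedness computation. Both are complete proofs; the paper's is shorter, yours is more structural.
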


    \begin{proof}
        Write $1_W=\sum_i u_iu'_i$ where $u_1,\dots,u_t\in We$ and $u'_1,\dots,u'_t\in eW$ and
        define $\psi: N\otimes_W M\to Ne\otimes_{eWe} eM$ by $\psi(x\otimes_W y)=\sum_{i}xu_i\otimes_{eWe}u'_iy$.
        Then $\psi$ is well defined because
        \begin{eqnarray*}
        \psi(xw\otimes_W y)&=&
        \sum_{i}xw u_i\otimes_{eWe}u'_iy=
        \sum_{i,j}xu_ju'_jwu_i\otimes_{eWe}u'_iy\\
        &=&
        \sum_{i,j}xu_j\otimes_{eWe}u'_jw u_i u'_iy=
        \sum_j xu_j\otimes_{eWe}u'_jwy=
        \psi(x\otimes_W wy)
        \end{eqnarray*}
        and it is straightforward to check that $\psi=\vphi^{-1}$.
    \end{proof}

    An idempotent $e\in W$ satisfying $WeW=W$ is called \emph{full}. This condition
    is equivalent to $eW_W$ (or ${}_WWe$) being a progenerator; see \cite[Rm.\ 18.10]{La99}. For example, the standard matrix
    unit $e_{11}$ is a full idempotent in $\nMat{W}{n}$.

    \begin{prp}\label{GEN:PR:power-of-a-generic-form}
        For all $\alpha\in\aEnd(W)$, we have $b_{T_n\alpha}\sim n\cdot b_\alpha$.
    \end{prp}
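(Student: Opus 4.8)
The plan is to exhibit a single explicit similarity $f\colon K_{T_n\alpha}\to K_\alpha$ realizing $n\cdot b_\alpha=f\circ b_{T_n\alpha}$, so that $b_{T_n\alpha}\sim n\cdot b_\alpha$ by definition. Write elements of $M^n$ as columns $x=(x_1,\dots,x_n)$. The biadditive map $(x,y)\mapsto \sum_{i=1}^n x_i\otimes_\alpha y_i$ descends through $M^n\otimes_\Z M^n$ to a homomorphism $f\colon K_{T_n\alpha}\to K_\alpha$, $f(x\otimes_{T_n\alpha}y)=\sum_i x_i\otimes_\alpha y_i$, provided it annihilates the defining relations $(gx)\otimes y-x\otimes(g^{T_n\alpha}y)$ for $g=(g_{ij})\in\nMat{W}{n}$. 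First I would check this by bookkeeping: since $(gx)_i=\sum_j g_{ij}x_j$ and $g_{ij}x_j\otimes_\alpha y_i=x_j\otimes_\alpha g_{ij}^\alpha y_i$ in $K_\alpha$, we get $f((gx)\otimes y)=\sum_{i,j}x_j\otimes_\alpha g_{ij}^\alpha y_i$; as $(g^{T_n\alpha})_{kl}=g_{lk}^\alpha$, this equals $\sum_{k,l}x_k\otimes_\alpha g_{lk}^\alpha y_l=f(x\otimes g^{T_n\alpha}y)$ after relabeling. Because the $R$-actions on $M^n$ are coordinatewise, the same computation shows $f$ respects $\mul{0}$ and $\mul{1}$, so $f$ is a morphism of double $R$-modules, and by construction $f(b_{T_n\alpha}(x,y))=\sum_i x_i\otimes_\alpha y_i=(n\cdot b_\alpha)(x,y)$.

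Surjectivity of $f$ is immediate, since $f(\iota_1(x)\otimes_{T_n\alpha}\iota_1(y))=x\otimes_\alpha y$, where $\iota_i\colon M\to M^n$ denotes the $i$-th coordinate inclusion. The one genuine obstacle is injectivity, i.e.\ bijectivity of $f$; I would settle it by writing down the two-sided inverse. Define $g\colon K_\alpha\to K_{T_n\alpha}$ by $g(x\otimes_\alpha y)=\iota_1(x)\otimes_{T_n\alpha}\iota_1(y)$. Well-definedness of $g$ follows because $w\in W$ acts on the first coordinate as $we_{11}\in\nMat{W}{n}$ and $(we_{11})^{T_n\alpha}=w^\alpha e_{11}$, whence
\[
\iota_1(wx)\otimes_{T_n\alpha}\iota_1(y)=(we_{11})\iota_1(x)\otimes_{T_n\alpha}\iota_1(y)=\iota_1(x)\otimes_{T_n\alpha}\iota_1(w^\alpha y),
\]
which is exactly the relation defining $\otimes_\alpha$. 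One checks $f\circ g=\id_{K_\alpha}$ on the nose.

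The heart of the matter is $g\circ f=\id_{K_{T_n\alpha}}$, equivalently the identity $x\otimes_{T_n\alpha}y=\sum_i\iota_1(x_i)\otimes_{T_n\alpha}\iota_1(y_i)$ in $K_{T_n\alpha}$. I would prove it by sliding matrix units across the twisted tensor: writing $x=\sum_i e_{i1}\iota_1(x_i)$ and using $e_{i1}^{T_n\alpha}=e_{1i}$ (the transpose sends $e_{i1}\mapsto e_{1i}$, and $\alpha$ fixes the unit entry), one obtains
\[
e_{i1}\iota_1(x_i)\otimes_{T_n\alpha}y=\iota_1(x_i)\otimes_{T_n\alpha}e_{1i}y=\iota_1(x_i)\otimes_{T_n\alpha}\iota_1(y_i),
\]
since $e_{1i}y=\iota_1(y_i)$; summing over $i$ gives the claim. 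As $g\circ f$ and $\id$ are additive and $K_{T_n\alpha}$ is generated by simple tensors, this establishes bijectivity. (Alternatively, injectivity could be obtained abstractly: by Remark~\ref{GEN:RM:twisting-remark} one has $K_{T_n\alpha}\cong(M^n)^{T_n\alpha}\otimes_{\nMat{W}{n}}M^n$, and Lemma~\ref{GEN:LM:full-idempotent-lemma} applied to the full idempotent $e_{11}$ collapses this to $(M^n)^{T_n\alpha}e_{11}\otimes_W e_{11}M^n\cong M^\alpha\otimes_W M\cong K_\alpha$. But transporting the form through that chain reduces to the same matrix-unit bookkeeping, so I prefer the explicit inverse $g$.) With $f$ an isomorphism of double $R$-modules satisfying $n\cdot b_\alpha=f\circ b_{T_n\alpha}$, we conclude $b_{T_n\alpha}\sim n\cdot b_\alpha$.
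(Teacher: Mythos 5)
Your proof is correct and is at bottom the paper's own argument: your $g$ is exactly the similarity the paper constructs (the embedding $x\otimes_\alpha y\mapsto \psi_1 x\otimes_{T_n\alpha}\psi_1 y$), and your matrix-unit verification that $g\circ f=\id_{K_{T_n\alpha}}$ — writing $x=\sum_i e_{i1}\iota_1(x_i)$ and sliding across the tensor via $e_{i1}^{T_n\alpha}=e_{1i}$ — is precisely the proof of Lemma~\ref{GEN:LM:full-idempotent-lemma} specialized to the full idempotent $e_{11}$ with $1=\sum_i e_{i1}e_{1i}$, which is how the paper establishes bijectivity. The only cosmetic differences are that you build the inverse explicitly instead of citing the lemma, and that by fixing the first coordinate you bypass the paper's check that the embedding is independent of $i$.
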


    \begin{proof}
        Let $\{e_{ij}\}$ be the standard matrix units of $U:=\nMat{W}{n}$, and let $\psi_i:M\to M^n$
        be the embedding of $M$ as the $i$-th component of $M^n$. Choose some $1\leq i\leq n$, and define
        $f:K_\alpha\to K_{T_n\alpha}$ by $f(x\otimes_\alpha y)=\psi_i x\otimes_{T_n\alpha}\psi_i y$ for all $x,y\in M$. It is easy
        to see that $f$ is well-defined. Furthermore, $f$ is independent of $i$ because
        \[
        \psi_i x\otimes_{T_n\alpha}\psi_i y=e_{ij}e_{ji}\psi_ix\otimes_{T_n\alpha} \psi_iy=e_{ji}\psi_ix\otimes_{T_n\alpha} e_{ij}^{T_n\alpha} \psi_ix=
        \psi_jx\otimes_{T_n\alpha} \psi_jy\ .
        \]
        This in turn implies that for all $x=(x_1,\dots,x_n),y=(y_1,\dots, y_n)\in M^n$, we have
        \begin{eqnarray*}
        f((n\cdot b_\alpha)(x,y))&=&
        \sum_{i=1}^n f(b_\alpha(x_i,y_i))=\sum_{i=1}^nb_{T_n\alpha}(\psi_i x_i, \psi_i y_i)
        \\
        &=&
        \sum_{i=1}^nb_{T_n\alpha}(e_{ii}x,e_{ii}y)=\sum_{i=1}^nb_{T_n\alpha}(x,e_{ii}^{T_n\alpha} e_{ii}y)=b_{T_n\alpha}(x,y)\ .
        \end{eqnarray*}
        Therefore, we are done if we show that $f$ is an isomorphism. To see this, identify $M$ with $e_{11}M^n$ and let
        $e=e_{11}$ and $\alpha'=T_n\alpha|_{eUe}$. Recall
        that $K_{T_n\alpha}=M^n\otimes_{T_n\alpha} M^n$ can be understood as $(M^n)^{T_n\alpha}\otimes_U (M^n)$ (see Remark~\ref{GEN:RM:twisting-remark}),
        and
        likewise, we can identify $K_\alpha=M\otimes_\alpha M=eM^n\otimes_{\alpha'}eM^n$ with
        $(eM^n)^{T_n\alpha}\otimes_{eUe}(eM^n)=(M^n)^{T_n\alpha}e\otimes_{eUe} e(M^n)$. Now, the map $f$ is just
        the map $\vphi$ of Lemma~\ref{GEN:LM:full-idempotent-lemma}, hence it is an isomorphism because $UeU=U$.
    \end{proof}

\rem{
    In this section, we study how the map $\alpha\mapsto b_\alpha$ of section~\ref{section:GEN:preface}
    interacts with orthogonal sums.
    We will be particularly interested in the following questions: Let
    $(M_1,b_1,K)$, $(M_2,b_2,K)$ be two regular bilinear forms.
    \begin{enumerate}
        \item[(1)] Assume $b_1$, $b_2$ are right generic (see Remark~\ref{GEN:RM:dfn-of-generic-forms}). Is $b_1 \perp b_2$ right generic?
        \item[(2)] Assume $b_1\perp b_2$ is right generic. Are $b_1$, $b_2$ right generic?
    \end{enumerate}
    We will provide sufficient conditions for an affirmative answer to both questions, and show that the answer to question
    (2) is ``no'' in general. Question (1) is still open in the general case.
    The results that we obtain in this section also ease the computation of $K_\alpha$ in certain cases.
    This will be exploited in the next section to give an explicit description of $K_\alpha$
    when the base module $M$ is a generator.

    \medskip

    We set some general notation to be used throughout the section:
     $(M_1,b_1,K)$ and $(M_2,b_2,K)$ are two
    \emph{right regular} bilinear spaces and $(M,b,K)=(M_1,b_1,K)\perp(M_2,b_2,K)$.
    Let $e_i$ be the projection of $M$ onto the summand $M_i$ ($i=1,2$). We consider
    $e_i$ as an element of $W:=\End_R(M)$ and identify $\End_R(M_i)$ with $e_iWe_i$.
    Let $\alpha=\alpha(b)$ be the corresponding anti-endomorphism of $b$.
    It is routine to check that $b(e_ix,y)=b(x,e_iy)$ for all $x,y\in M$ and hence, $e_i^\alpha=e_i$.
    Moreover, $\alpha_i:=\alpha|_{e_iWe_i}$ is the corresponding anti-endomorphism of $b_i$.
    We are interested in understanding the connection between the generizations of $b_1$, $b_2$ and $b$
    (namely, $b_{\alpha_1}$, $b_{\alpha_2}$ and $b_\alpha$, resp.).
    By the universal property of
    the generization, there are unique homomorphisms of double $R$-modules $f:K_\alpha\to K$, $f_i:K_{\alpha_i}\to K_i$
    such that $b=f\circ b_\alpha$ and $b_i=f_i\circ b_{\alpha_i}$. It is easy to see
    that $b$ (resp.\ $b_1$, $b_2$) is generic if and only if
    $f$ (resp.\ $f_1$, $f_2$) is an isomorphism. Questions (1) and (2) can now be phrased as:
    \begin{enumerate}
        \item[(1)] Assume $f_1$, $f_2$ are bijective. Is $f$ bijective?
        \item[(2)] Assume $f$ is bijective. Are $f_1$, $f_2$ bijective?
    \end{enumerate}
    To answer these questions, we observe that $b_{\alpha}|_{M_i\times M_i}$
    clearly satisfies\linebreak $b_{\alpha}|_{M_i\times M_i}(wx,y)=b_{\alpha}|_{M_i\times M_i}(x,w^{\alpha_i}y)$
    for all $x,y\in M_i$ and $w\in e_iWe_i$. Therefore, there exists a homomorphism
    $g_i:K_{\alpha_i}\to K_\alpha$ such that $b_{\alpha}|_{M_i\times M_i}=g_i\circ b_{\alpha_i}$,
    and it is straightforward to check that $f\circ g_i=f_i$.
    The answer to our questions therefore depends on whether $g_i$ is bijective.

    \begin{lem}
        Let $N\in\rMod{W}$, $M\in\lMod{W}$ and let $e\in\ids{W}$.
        Define a group homomorphism $\vphi: Ne\otimes_{eWe} eM\to N\otimes_{W} M$
        by $\vphi(x\otimes_{eWe} y)= x\otimes_W y$. Then:
        \begin{enumerate}
            \item[(i)] $WeM=M$ $\derives$ $\vphi$ is onto.
            \item[(ii)] $WeW=W$ $\derives$ $\vphi$ is an isomorphism.
        \end{enumerate}
    \end{lem}

    \begin{proof}
        (i) Let $x\in N$, $y\in M$. Then there are $y_1,\dots,y_t\in M$ and $w_1,\dotsm,w_t\in W$ such
        that $y=\sum_iw_iey_i$. Thus, $x\otimes_W y=x\otimes_W \sum_iw_iey_i
        =\sum_i x\otimes_W w_iey_i=
        \sum_i xw_ie\otimes_W ey_i=\vphi(\sum_i xw_ie\otimes_{eWe} ey_i)$,
        so $\vphi$ is onto.

        (ii)
        Write $1_W=\sum_i u_iu'_i$ where $u_1,\dots,u_t\in We$ and $u'_1,\dots,u'_t\in eW$ and
        define $\psi: N\otimes_W M\to Ne\otimes_{eWe} eM$ by $\psi(x\otimes_W y)=\sum_{i}xu_i\otimes_{eWe}u'_iy$.
        Then $\psi$ is well defined because
        \begin{eqnarray*}
        \psi(xw\otimes_W y)&=&
        \sum_{i}xw u_i\otimes_{eWe}u'_iy=
        \sum_{i,j}xu_ju'_jwu_i\otimes_{eWe}u'_iy\\
        &=&
        \sum_{i,j}xu_j\otimes_{eWe}u'_jw u_i u'_iy=
        \sum_j xu_j\otimes_{eWe}u'_jwy=
        \psi(x\otimes_W wy)
        \end{eqnarray*}
        and it is straightforward to check that $\psi=\vphi^{-1}$.
    \end{proof}

    An idempotent $e\in \ids{W}$ satisfying $WeW=W$ is called \emph{full}. This condition
    is equivalent to $eW_W$ (or ${}_WWe$) being a progenerator (so $eWe$ is Morita equivalent
    to $W$ in this case).

    \begin{prp}\label{GEN:PR:sum-of-generic-forms}
        In the previous notation:
        \begin{enumerate}
            \item[(i)] If $WM_1=M$, then $b_1$ is right generic $\derives$ $b$ is right generic.
            \item[(ii)] If $e_1$ is full, then $b=b_1\perp b_2$ is right generic $\iff$ $b_1$ is right generic.
        \end{enumerate}
    \end{prp}

    \begin{proof}
        Identify $K_\alpha$ with $M^\alpha\otimes_W M$ and $K_{\alpha_i}$ with
        $M_i^{\alpha_i}\otimes_{e_iWe_i} M_i=M^\alpha e\otimes_{eWe} eM$.
        Then the map $g_i:K_{\alpha_i}\to K_\alpha$ is the map $\vphi$ of Lemma~\ref{GEN:LM:full-idempotent-lemma}.

        If $We_1M=WM_1=M$, then the lemma implies $g_1$ is surjective, so if $f_1$ is an isomorphism, the
        identity $f\circ g_1=f_1$ easily implies that $f$ is bijective. This settles (i) as explained above.
        If $e_1$ is full, then the lemma implies that $g_1$ is an isomorphism. As $f\circ g_1=f_1$,
        this means that $f$ is bijective $\iff$ $f_1$ is bijective, which implies (ii).
    \end{proof}

    \begin{remark}\label{GEN:RM:K-alpha-computation}
        The proof of Proposition~\ref{GEN:PR:sum-of-generic-forms} implies that
        the map $g_1:K_{\alpha_1}\to K_\alpha$ is bijective when $e_1$ is full in $W$.
        This actually holds even if we take $e_1$ to be an arbitrary idempotent
        in $W$  (and not assume $e_1^\alpha=e_1$). In this more general
        case, we get an isomorphism $M^{\alpha}e_1\otimes_{e_1We_1} e_1M\to K_\alpha$
        given by $x\otimes_{e_1We_1} y\mapsto y\otimes_\alpha x$.
        This isomorphism can sometimes help in  computing $K_\alpha$.
    \end{remark}

    \begin{cor}
        Let $b:M\times M\to K$ be a bilinear form and let $n\in\N$.
        Then $b$ is right generic $\iff$ $n\cdot b:=b\perp\dots\perp b$ ($n$ times) is right generic.
    \end{cor}

    \begin{proof}
        Let $e_1\in W:=\End_R(M^n)$ be the projection of $M^n$ onto $M_1:=M\oplus 0\oplus\dots\oplus 0$.
        Then $We_1W=W$ and $b$ can be identified with  $(n\cdot b)|_{M_1\times M_1}$.
        Now, by Proposition~\ref{GEN:PR:orth-sum-regularity}, $b$ is right regular
        if and only if $n\cdot b$ is right regular, and when this holds, Proposition~\ref{GEN:PR:sum-of-generic-forms}(ii)
        implies that $n\cdot b$ is right generic if and only if $b$ is right generic.
    \end{proof}

    \begin{prp}
        Let $M\in\rMod{R}$, $W=\End(M_R)$, $\alpha\in \aEnd(W)$ and $n\in\N$.
        Identify $\End_R(M^n)$ with $\nMat{W}{n}$ and let $\beta\in\aEnd(\nMat{W}{n})$
        be defined by $(w_{ij})^\beta=(w_{ji}^\alpha)$.\rem{
        \[\left[\DotsArr{w_{11}}{w_{1n}}{w_{n1}}{w_{nn}}\right]^\beta=
        \left[\DotsArr{w_{11}^\alpha}{w_{n1}^\alpha}{w_{1n}^\alpha}{w_{nn}^\alpha}\right]\ .\]}
        Then
        $n\cdot b_\alpha\sim b_\beta$  (and in particular, $K_\alpha\cong K_\beta$).
    \end{prp}

    \begin{proof}
        Let $\{e_{ij}\}$ be the standard matrix units of $U:=\nMat{W}{n}$, let $\beta_i=\beta|_{e_{ii}Ue_{ii}}$, and let $\psi_i:M\to M^n$
        be the embedding of $M$ as the $i$-th component of $M^n$. Let $1\leq i\leq n$.
        Then $f_i:K_{\alpha}\to K_{\beta_i}$ defined by $g_i(x\otimes_\alpha y)=\psi_ix\otimes_{\beta_i} \psi_iy$
        is an isomorphism
        (straightforward) and
        since $Ue_{ii}U=U$,
        $g_i:K_{\beta_i}\to K_\beta$
        defined by $g_i(x\otimes_{\beta_i}y)=x\otimes_\beta y$
        is an isomorphism (Remark \ref{GEN:RM:K-alpha-computation}).
        Thus, $h:=g_i\circ f_i:K_\alpha\to K_\beta$ is an isomorphism and, moreover, it is independent of $i$ since
        \[h(x\otimes_\alpha y)=
        \psi_i x\otimes_{\beta}\psi_i y=e_{ij}e_{ji}\psi_ix\otimes_\beta \psi_iy=e_{ji}\psi_ix\otimes_\beta e_{ij}^\beta \psi_ix=
        \psi_jx\otimes \psi_jy\]
        for all $x,y\in M$. Now, let $x=(x_i), y=(y_i)\in M^n$.
        Then
        \begin{eqnarray*}
        h((n\cdot b_\alpha)(x,y))&=&
        \sum_{i=1}^n h(b_\alpha(x_i,y_i))=\sum_{i=1}^nb_{\beta}(\vphi_i x_i, \vphi_i y_i)
        \\
        &=&
        \sum_{i=1}^nb_{\beta}(e_{ii}x,e_{ii}y)=\sum_{i=1}^nb_{\beta}(x,e_{ii}^\beta e_{ii}y)=b_\beta(x,y)\ ,
        \end{eqnarray*}
        hence $h$ is a similarity from $n\cdot b_\alpha$ to $b_{\beta}$.
    \end{proof}

    We finish this section with demonstrating that  $b=b_1\perp b_2$ is right generic does not
    imply that $b_1$ or $b_2$ are right generic. Note that in this case, the forms
    $b_1$ and $b_2$ are right regular, but not right generic. It is still open whether
    $b_1$ and $b_2$ being right generic implies that so is $b$.

    \begin{example}
        Let $F$ be a field, let $R$ be the ring of upper-triangular $2\times 2$ matrices over $F$ and
        let $M=R_R$. We identify $\End(M_R)=\End(R_R)$ with $R$ in the standard way.
        Define $\alpha:R\to R$ by $\smallSMatII{a}{b}{0}{c}^\alpha=\smallSMatII{a}{0}{0}{c}$
        and let $e_i=e_{ii}$ (where $\{e_{ij}\}$ are the standard matrix units).
        Then $\alpha$ is an anti-endomorphism satisfying $e_i^\alpha=e_i$ for $i=1,2$.
        Let $b=b_\alpha$. By Theorem~\ref{GEN:TH:main-thm-I}, $b$ is right regular and hence, right generic.
        Let $M_i=e_iM$ and $b_i:=b|_{M_i\times M_i}$. It is easy to check that $b=b_1\perp b_2$.
        Define
        $\alpha_i$, $f_i$, $g_i$ and $f$ as above. Since $b=b_{\alpha(b)}$, $f=\id$ and $f_i=g_i$.
        We shall
        show that $b_1$
        and $b_2$ are not right generic by computing $f_1$ and $f_2$ explicitly.

        Firstly, we claim that $K=K_\alpha\cong R$ via $x\otimes_\alpha y\mapsto x^\alpha y$,
        where the double $R$-module structure on $R$ is given by
        $x\mul{0} r=r^\alpha x$ and $x\mul{1} r=xr$ for all $x,r\in R$.
        This is easily seen once noting $({}_RR)^\alpha\otimes_R ({}_RR)\cong R$ via $x\otimes_R y\mapsto x\twistmul{\alpha} y=y^\alpha x$.

        Next, make $K':=\nMat{F}{2}$ into a double $R$-module by defining $x\mul{0} r=\trans{r}x$
        and $x\mul{1} r=xr$ (where $\trans{r}$ is the transpose of $r\in R$).
        It is easy to see that the map $K_{\alpha_i}\to K'$
        given by $x\otimes_{\alpha_i} y\mapsto \trans{x}y$ is an injection of double $R$-modules.
        (Indeed, $\alpha_i=\id_{e_iRe_i}$ and $e_iRe_i\cong F$, hence
        $K_{\alpha_i}= M_i\otimes_{\alpha_i} M_i\cong M_i\otimes_F M_i$
        via $x\otimes_\alpha y\mapsto x\otimes_F y$ and $M_i\otimes_F M_i$ embeds in $\nMat{F}{2}$ via $x\otimes_F y\mapsto \trans{x}y$.)
        We can thus identify $K_{\alpha_1}$ with $K'$ and $K_{\alpha_2}$ with $\{\smallSMatII{0}{0}{0}{a}\in K'\where a\in F\}\subseteq K'$.
        The isomorphisms are given by:
        \begin{eqnarray*}
            \SMatII{a}{b}{0}{0}\otimes_{\alpha_1} \SMatII{a'}{b'}{0}{0} &\mapsto& \SMatII{aa'}{ab'}{ba'}{bb'} \\
            \SMatII{0}{0}{0}{c}\otimes_{\alpha_2} \SMatII{0}{0}{0}{c'} &\mapsto& \SMatII{0}{0}{0}{cc'}
        \end{eqnarray*}
        This allows us to compute $f_1$ and $f_2$ explicitly; under the previous identifications they are given by:
        \begin{eqnarray*}
            f_1\left(\SMatII{a}{b}{c}{d}\right) & = & \SMatII{a}{b}{0}{0} \\
            f_2\left(\SMatII{0}{0}{0}{x}\right) & = & \SMatII{0}{0}{0}{x}
        \end{eqnarray*}
        (Indeed, the first formula easily follows from $f_1(\smallSMatII{aa'}{ab'}{ba'}{bb'})=f_1(b_1(\smallSMatII{a}{b}{0}{0}, \smallSMatII{a'}{b'}{0}{0}))=
        b_{\alpha}(\smallSMatII{a}{b}{0}{0}, \smallSMatII{a'}{b'}{0}{0})=\smallSMatII{a}{b}{0}{0}^\alpha\smallSMatII{a'}{b'}{0}{0}=
        \smallSMatII{aa'}{ab'}{0}{0}$ and the second is shown via similar computation).
        In particular, $f_1$ is neither injective nor surjective and $f_2$ is not surjective.\rem{
        Note that $b_1$, $b_2$, $b_{\alpha_1}$, $b_{\alpha_2}$ and $b_\alpha$ are all right regular.
        This easy fact is left to the reader. (Alternatively,
        that $b_{\alpha_1}$, $b_{\alpha_2}$ and $b_\alpha$ are right regular follows from Theorem \ref{GEN:TH:main-thm-I}
        because $M$, $M_1$ and $M_2$ are f.g.\ and projective,
        and $b_1$, $b_2$ are right regular because $b_\alpha=b_1\perp b_2$.)}
    \end{example}
}

\rem{

    \begin{cor}
        Let $b:M\times M\to K$ be a bilinear form and let $n\in\N$.
        Then $b$ is right generic $\iff$ $n\cdot b$ is right generic.
    \end{cor}

    \begin{proof}
        By Proposition~\ref{GEN:PR:orth-sum-regularity}, $b$ is right regular
        if and only if $n\cdot b$ is right regular. Assume that this holds and let $\alpha$, $\beta$ be
        the corresponding anti-endomorphisms of $b$, $n\cdot b$ respectively. Then it is easy
        to check that $\beta$ is obtained from $\alpha$ as in Corollary \ref{GEN:CR:power-of-a-generic-form}
        and hence $b_\beta\sim n\cdot b_\alpha$.
        Now, if $b$ is right generic, then $b\sim b_\alpha$, hence $n\cdot b\sim n\cdot b_{\alpha}\sim b_{\beta}$, which
        means $n\cdot b$ is right generic.
        On the other hand, if $n\cdot b$ is right generic, then $n\cdot b\sim b_\beta\sim n\cdot b_{\alpha}$.
        Let $M_1=M\times 0\times\dots\times 0\subseteq M^n$. Then the previous
        similarity induces a similarity $(n\cdot b)|_{M_1\times M_1}\sim (n\cdot b_\alpha)|_{M_1\times M_1}$
        and this clearly implies $b\sim b_\alpha$.
    \end{proof}

    The previous corollary leads to the following question, which is still
    open.

    \begin{que}
        Let $b_1:M_1\times M_1\to K$
        and $b_2:M_2\times M_2\to K$ be two \emph{right generic} bilinear forms.
        Is $b_1\perp b_2$ always  right generic?
    \end{que}
}

\section{The Structure of $K_\alpha$}
\label{section:GEN:generators}

    Let $R,M,W$ be as in section \ref{section:GEN:preface}.
    In this section, we use the results of the previous sections
    to obtain a (relatively) explicit description of
    $b_\alpha$ and $K_\alpha$ ($\alpha\in \aEnd(W)$)
    in case $M$ is a generator. We then use this description for several of applications.

    \begin{dfn}
        Let $M$ be a right $R$-module and $W=\End_R(M)$. The module $M$ is called \emph{faithfully balanced} if
        the standard map $R\to \End_W(M)$ is an isomorphism.
    \end{dfn}

    \begin{example}
        It is well known that any $R$-module which is generator is faithfully balanced; e.g., see \cite[Exer.\ 4.1.14]{Ro88}.
    \end{example}

    Let $M$ be a generator of $\rMod{R}$. Then
    $R_R$ is a summand of
    $M^n$ for some $n\in\N$. Let $e$ be the projection from
    $M^n$ onto the summand $R_R$.
    Then $e$ is an idempotent in
    $\End_R(M^n)$, which we identify with $U:=\nMat{\End_R(M)}{n}=\nMat{W}{n}$.
    Observe that ${}_UUe\cong {}_UM^n$ via $ue\mapsto u(1_R)$.
    (Here, $1_R$ is the unity of $R$, considered as an element of $M^n$. The inverse of
    this isomorphism
    is given by $x\mapsto [m\mapsto x\cdot (em)]\in U$.)
    Identify
    $Ue$ with $M^n$. Then, $\End({}_UM^n)= \End({}_UUe)=eUe$ and since $M^n_R$
    is faithfully balanced (it is a generator), it follows that $R\cong eUe$
    as rings, so we may assume $R=eUe$. In particular, $Ue$ and $M^n$ coincide as $(U,R)$-bimodules
    and $e_{ii}Ue$ is just the $i$-th copy of $M$ in $M^n$ (where $\{e_{ij}\}$ are the standard matrix units in $U$).

    \begin{prp}\label{GEN:PR:K-alpha-computation}
        Keeping the previous notation, let $\alpha\in\aEnd(W)$
        and let $\beta:=T_n\alpha\in\aEnd(U)$, with $T_n\alpha$ defined as in section~\ref{section:GEN:orthogonal-sums}.
        Make $e^{\beta} Ue$ into  a double
        $R$-module by letting
        \[u\mul{0} r=r^{\beta} u,\qquad u\mul{1} r=ur\qquad\forall\, r\in R=eUe,\, u\in e^{\beta} Ue\]
        and define $b:M\times M=e_{11}Ue\times e_{11}Ue\to e^{\beta} Ue$ by $b(x,y)=x^{\beta} y$. Then:
        \begin{enumerate}
        \item[(i)] $b_\alpha\sim b$. The similarity is given by $x\otimes_\alpha y\mapsto x^{\beta} y$ ($x,y\in M=e_{11}Ue$).
        \item[(ii)] Assume $\alpha$ is an involution. Then once identifying $K_\alpha$ with $eUe^{\beta}$ as in (i),
        $\theta_\alpha$ is just ${\beta}|_{e^{{\beta}}Ue}$ (see section~\ref{section:GEN:preface} for the definition of $\theta_\alpha$).
        \end{enumerate}
    \end{prp}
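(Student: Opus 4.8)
The plan is to exhibit the claimed similarity $g\colon K_\alpha\to e^{\beta}Ue$, $x\otimes_\alpha y\mapsto x^{\beta}y$, as a composite $g=F\circ\iota$ of two double $R$-module isomorphisms, where $\iota$ identifies $K_\alpha$ with $K_\beta$ and $F$ realizes $K_\beta$ as the corner $e^{\beta}Ue$. For $\iota$ I would invoke Proposition~\ref{GEN:PR:power-of-a-generic-form}: it gives a similarity $K_\alpha\to K_{T_n\alpha}=K_\beta$, and its proof shows this map sends $x\otimes_\alpha y\mapsto \psi_1 x\otimes_\beta\psi_1 y$, where $\psi_1\colon M\to M^n$ is the first–coordinate embedding. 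Under the identifications $M=e_{11}Ue\subseteq Ue=M^n$ we have $\psi_1 x=x$, so $\iota(x\otimes_\alpha y)=x\otimes_\beta y$.

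The heart of the argument is the construction of $F\colon K_\beta=M^n\otimes_\beta M^n\to e^{\beta}Ue$. Viewing $M^n=Ue$ as a left $U$-module, I would set $F(x\otimes_\beta y)=x^{\beta}y$. This is well defined: the defining relation of $\otimes_\beta$ reads $ux\otimes_\beta y=x\otimes_\beta u^{\beta}y$, and applying $F$ gives $(ux)^{\beta}y=x^{\beta}u^{\beta}y=x^{\beta}(u^{\beta}y)$, with no appearance of $\beta^2$ --- this is the subtlety that makes the direct corner map legitimate even when $\alpha$ is not bijective. Since $x=xe$ and $y=ye$ force $x^{\beta}y\in e^{\beta}Ue$, and $(xr)^{\beta}=r^{\beta}x^{\beta}$ for $r\in R=eUe$, the map $F$ is a homomorphism of double $R$-modules for the structure $u\mul{0} r=r^{\beta}u$, $u\mul{1} r=ur$. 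I expect the main work to be checking $F$ is bijective, which I would do by writing down the inverse $G(z)=e\otimes_\beta z$ (legitimate since every $z=e^{\beta}we=ze$ lies in $Ue$). Then $F(G(z))=e^{\beta}z=e^{\beta}e^{\beta}we=z$ because $e^{\beta}$ is idempotent, while $G(F(x\otimes_\beta y))=e\otimes_\beta x^{\beta}y=xe\otimes_\beta y=x\otimes_\beta y$, the middle equality being the defining relation of $\otimes_\beta$ applied to $u=x$, $a=e$. Hence $F$ is an isomorphism of double $R$-modules.

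Composing, $g=F\circ\iota$ satisfies $g(x\otimes_\alpha y)=F(x\otimes_\beta y)=x^{\beta}y=b(x,y)$ for all $x,y\in M=e_{11}Ue$, so $b=g\circ b_\alpha$ with $g$ an isomorphism; this is exactly (i). For (ii), assume $\alpha$ is an involution. Then $\beta=T_n\alpha$ satisfies $\beta^2=\id$ (applying $T_n\alpha$ twice transposes twice and applies $\alpha^2=\id$ entrywise), and $(e^{\beta}we)^{\beta}=e^{\beta}w^{\beta}e$, so $\beta$ maps $e^{\beta}Ue$ into itself and restricts to an involution $\theta$ there; the axioms $(z\mul{0} r)^\theta=z^\theta\mul{1} r$ and $(z\mul{1} r)^\theta=z^\theta\mul{0} r$ follow from $\beta$ reversing products together with $r^{\beta^2}=r$. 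Finally, identifying $K_\alpha$ with $e^{\beta}Ue$ via $g$, I would verify $g\circ\theta_\alpha=\theta\circ g$ on generators: $g(\theta_\alpha(x\otimes_\alpha y))=g(y\otimes_\alpha x)=y^{\beta}x$, while $\theta(g(x\otimes_\alpha y))=(x^{\beta}y)^{\beta}=y^{\beta}x^{\beta^2}=y^{\beta}x$. Thus $\theta_\alpha$ transports to $\beta|_{e^{\beta}Ue}$, proving (ii).

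The only genuinely new computation is the bijectivity of $F$, and within it the identity $G\circ F=\id$; everything else is bookkeeping with the relation defining $\otimes_\beta$ and the idempotency of $e$ and $e^{\beta}$. The point that must be handled with care is that both $\iota$ and $F$ respect the right $R=eUe$-action, so that $g$ is an isomorphism of \emph{double} $R$-modules rather than merely of abelian groups.
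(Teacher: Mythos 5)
Your proof is correct and takes essentially the same route as the paper's: both factor the similarity $x\otimes_\alpha y\mapsto x^{\beta}y$ through the isomorphism $K_\alpha\cong K_{T_n\alpha}$ of Proposition~\ref{GEN:PR:power-of-a-generic-form} and then identify $K_\beta$ with the corner $e^{\beta}Ue$, and your part (ii) is the same generator computation $(x^{\beta}y)^{\beta}=y^{\beta}x$. The only difference is cosmetic: where the paper invokes Remark~\ref{GEN:RM:twisting-remark} together with the standard fact $A\otimes_U Ue\cong Ae$, you verify the corner isomorphism directly via the explicit inverse $z\mapsto e\otimes_\beta z$, which is the same content unpacked.
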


    \begin{proof}
        (i) Since $M$ is identified with $e_{11}Ue$, we may identify $\End_R(M)$
        with $e_{11}Ue_{11}$ and $\alpha$ with $\beta|_{e_{11}Ue_{11}}$.
        As in the proof of Proposition~\ref{GEN:PR:power-of-a-generic-form}, the map
        $M\otimes_\alpha M\to M^n\otimes_\beta M^n$ given by $x\otimes_\alpha y\mapsto x\otimes_\beta y$
        is an isomorphism.
        Identify $K_\beta$ with
        $(Ue)^\beta\otimes_U Ue$ as in Remark~\ref{GEN:RM:twisting-remark}.
        The latter is isomorphic to $(Ue)\twistmul{\beta} e=e^\beta Ue$
        via $x\otimes_U u\mapsto x\twistmul{\beta} u=u^\beta x$ (this is a general fact; for any
        $A\in\rMod{U}$, $A\otimes_UUe\cong Ae$). Part (i)
        now follows by composing the isomorphisms $K_\alpha\to K_\beta$ and $K_\beta\to e^\beta Ue$.
        Here is the explicit computation:
        \[
        \begin{array}{ccccccl}
        e_{11}Ue\otimes_{\alpha}e_{11}Ue &\cong&   Ue\otimes_\beta Ue &\cong& (Ue)^\beta\otimes_U Ue &\cong& e^\beta Ue \\
        x\otimes_{\alpha} y              &\mapsto& x\otimes_\beta y   &\mapsto& y\otimes_U x &\mapsto& y\twistmul{\beta}x=x^\beta y\ .
        \end{array}
        \]

        (ii) Assume $\alpha$ is an involution and identify $K_\alpha$ with $e^\beta Ue$.
        Then for all $x,y\in e_{11}Ue$, $(x\otimes_{\alpha} y)^{\theta_\alpha}=y\otimes_\alpha x$,
        so under the identification $K_\alpha\cong e^\beta U e$ we get
        $(x^\beta y)^{\theta_\alpha}=y^\beta x$ and the latter equals $(x^\beta y)^\beta$ since $\beta$
        is also an involution. Thus, $\theta_\alpha$ coincides with $\beta$ on $e^\beta Ue$.
    \end{proof}

    \begin{remark}
        In the proposition's assumptions,
        it also possible to understand $e^\beta Ue$ as $e^\beta M^n=\im(e^\beta)$. Under this identification,
        the form $b$ is given by
        the formula
        \[b(x,y)=([z\mapsto x\cdot ez]^\beta) (y)\qquad\forall x,y\in M\ .\]
        Here, $z\in M^n$ and
        $M$ has to identified as one of the summands of $M^n=M\oplus\dots\oplus M$.
    \end{remark}

    Proposition~\ref{GEN:PR:K-alpha-computation} enables us to give another  proof for Theorem
    \ref{GEN:TH:main-thm-II} and even strengthen it.

    \begin{cor}\label{GEN:CR:generic-forms-over-generators}
        Assume $M\in\rMod{R}$ is generator, let $W=\End(M_R)$ and let $\alpha\in\aEnd(W)$.
        If $\alpha$ is injective, then $b_\alpha$ is left injective. If $\alpha$ is bijective,
        then $b_\alpha$ is (right and left) regular.
    \end{cor}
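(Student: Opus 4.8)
The plan is to reduce to the explicit model furnished by Proposition~\ref{GEN:PR:K-alpha-computation} and then separate the argument into an \emph{injectivity} part, handled by a direct trace computation, and a \emph{surjectivity} part, which is where the finiteness of ${}_WM$ must enter. First I would invoke Proposition~\ref{GEN:PR:K-alpha-computation}(i): writing $U=\nMat{W}{n}$, letting $e$ be the projection of $M^n$ onto the summand $R_R$, $R=eUe$, $M=e_{11}Ue$ and $\beta=T_n\alpha$ (notation of section~\ref{section:GEN:orthogonal-sums}), the form $b_\alpha$ is similar to $b(x,y)=x^\beta y$ on $M=e_{11}Ue$. Since a similarity $f\colon K\to K'$ gives $\lAd{b'}=f_*\circ\lAd{b}$ and $\rAd{b'}=f_*\circ\rAd{b}$ with $f_*$ an isomorphism, left/right injectivity and regularity are similarity invariants, so it suffices to prove the assertions for $b$. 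I would also record that $\beta=T_n\alpha$ is injective (resp.\ bijective) exactly when $\alpha$ is, since it applies $\alpha$ entrywise after a transposition.

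To prove the first claim I would assume $\alpha$, hence $\beta$, injective and take $x\in M$ with $\lAd{b}(x)=0$, i.e.\ $x^\beta y=0$ for all $y\in e_{11}Ue$. From $x=e_{11}xe$ and $e_{11}^\beta=e_{11}$ one gets $x^\beta=x^\beta e_{11}$, so the set $\{x^\beta y : y\in e_{11}Ue\}$ equals $x^\beta Ue$, which is therefore $0$. The crucial observation is that $eM^n\cong R_R$ generates $M^n$: for every $m\in M^n$ there is $w\in U=\End_R(M^n)$ with $w(1_R)=m$ (send $1_R\mapsto m$ on $eM^n$ and kill the complement), whence $\sum_{w\in U}w(eM^n)=M^n$. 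Reading $x^\beta Ue=0$ as $x^\beta\bigl(w(ez)\bigr)=0$ for all $w\in U$ and $z\in M^n$, I conclude that $x^\beta$ annihilates $\sum_w w(eM^n)=M^n$, so $x^\beta=0$ in $\End_R(M^n)$ and, by injectivity of $\beta$, $x=0$. Thus $b$, and hence $b_\alpha$, is left injective.

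For the bijective claim I expect this trace argument to deliver only injectivity of the adjoints, so the surjectivity built into regularity must come from finiteness. When $\alpha$ is bijective, the fact that $M$ is a generator gives that ${}_WM$ is finitely generated projective (\cite[Exer.\ 4.1.14]{Ro88}), and then $M^\alpha_W$ is finitely generated projective by the remark following Proposition~\ref{GEN:PR:main-thm-base}; Proposition~\ref{GEN:PR:main-thm-base}(i) then yields that $b_\alpha$ is right regular, and the symmetry discussed at the end of section~\ref{section:GEN:preface} (namely $b_\alpha\sim{}_{\alpha^{-1}}b$) upgrades this to left regular as well, giving regularity.

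The main obstacle—and the reason the two claims are proved by different mechanisms—is precisely this surjectivity. The generator hypothesis alone controls \emph{kernels}, through the trace identity $\sum_w w(eM^n)=M^n$ used above, and this is enough for the injectivity statement; but to \emph{invert} the adjoints one genuinely needs that ${}_WM$ is finitely generated projective, which is available only under the stronger (bijective) hypothesis via the twisting remark. I would therefore not attempt to force the bijective case through the explicit model, since the trace computation there stalls at injectivity.
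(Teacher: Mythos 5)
Your proof is correct, but only half of it follows the paper's route, and your closing diagnosis of the explicit model is mistaken. The injectivity half is essentially the paper's argument: the paper likewise reduces via Proposition~\ref{GEN:PR:K-alpha-computation} to the form $b(x,y)=x^\beta y$ and deduces from $b(x,M)=0$ that $x^\beta\in\annl Ue$, concluding $x^\beta=0$ because ${}_UUe$ is faithful --- it gets faithfulness instantly from $U=\End_R(M^n)=\End_{eUe}(Ue)$, where you re-derive it by hand through the trace identity $\{w(1_R)\suchthat w\in U\}=M^n$; the paper also first replaces $b_\alpha$ by $n\cdot b_\alpha$ (Propositions~\ref{GEN:PR:power-of-a-generic-form} and~\ref{GEN:PR:orth-sum-regularity}) so as to take $n=1$, a simplification it explicitly calls unnecessary, so your skipping it is harmless. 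Where you genuinely diverge is the bijective case: the paper \emph{does} complete it inside the model, and the computation does not ``stall at injectivity.'' Given $f\in\Hom_R(Ue,e^\beta Ue)$, view $f$ as an endomorphism of $Ue_R$; since $U=\End(Ue_R)$, $f$ is left multiplication by a \emph{unique} $u\in U$, and since $u$ and $e^\beta u$ induce the same map on $Ue$, uniqueness gives $u=e^\beta u$; bijectivity of $\beta$ then yields $x\in U$ with $x^\beta=u$, and $x^\beta=(xe)^\beta$ forces $x\in Ue=M$, so $\lAd{b}$ is surjective, hence bijective by the injectivity half, with right regularity by the left--right symmetry of section~\ref{section:GEN:preface}. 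Thus surjectivity comes from the endomorphism-ring description of $U$ (i.e.\ from the generator hypothesis alone), not from finite generation of ${}_WM$. Your substitute --- ${}_WM$ f.g.\ projective, hence $M^\alpha_W$ f.g.\ projective, hence Proposition~\ref{GEN:PR:main-thm-base}(i) plus symmetry --- is logically sound and non-circular (it is verbatim the generator case of Theorem~\ref{GEN:TH:main-thm-II}, which precedes the corollary), but it buys brevity at the cost of the corollary's stated purpose, namely to give \emph{another}, self-contained proof of that theorem via Proposition~\ref{GEN:PR:K-alpha-computation}; the paper's route keeps the whole argument inside one explicit model, while yours outsources exactly the half that the corollary was designed to reprove.
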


    \begin{proof}
        By Proposition~\ref{GEN:PR:power-of-a-generic-form} and Proposition \ref{GEN:PR:orth-sum-regularity},
        we can replace $b_\alpha$ with $n\cdot b_\alpha$, thus assuming $n=1$, $U=\nMat{W}{1}=W$, $e_{11}=1$
        and $\beta=\alpha$ in previous computations.
        (This step is not really necessary, but it simplifies the arguments to follow.)
        Define $b$  as in Proposition \ref{GEN:PR:K-alpha-computation}. Then it is enough to prove that $b$ is injective/regular.
        Indeed,
        $b(x,M)=0$ implies $x^\alpha\in\annl Ue$. Since $U= \End_R(M)=\End_{eUe}(Ue)$, ${}_UUe$ is faithful,
        so $x^\alpha=0$.
        Thus, if $\alpha$ is injective, $x=0$, hence $b_\alpha$ is left injective.

        Now assume $\alpha$ is bijective. We claim that $\lAd{b}$ is surjective. This
        is easily seen to be equivalent to
        showing that
        that any $f\in\Hom_R(Ue,e^\beta Ue)$ is induced
        by left multiplication with an element of $(Ue)^\beta=e^\beta U$.
        Indeed, viewing $f$ as an endomorphism of $Ue$, we see that $f(x)=ux$
        for some \emph{unique} $u\in U$ (because $U=\End(M^n_R)=\End(Ue_R)$). Since $u$ and $e^\beta u$
        clearly induce the same endomorphism on $Ue_R$, we must have $u=e^\beta u$, as required.
        Thus, $\lAd{b}$ is surjective, hence  bijective by the previous paragraph.
        That $b$ right regular follows by symmetry.
    \end{proof}

    We could neither find nor contradict the existence of a generator $M$ with an \emph{injective} $\alpha\in\aEnd(W)$
    such that
    $b_\alpha$ is not right injective. However, if $\alpha$ is not injective, then it is possible that $b_\alpha$
    would be the zero form even when $M$ is a generator; see Example~\ref{GEN:EX:example-V}.

\medskip

    The rest of this section uses Proposition~\ref{GEN:PR:K-alpha-computation} to obtain
    various structural results about $K_\alpha$, provided certain assumptions on $M$ and $R$.
    In particular, we prove
    the claims
    posed in Example~\ref{GEN:EX:classical-involutions}.

    \begin{prp}\label{GEN:CR:dimesion-of-K-alpha-cor}
        Assume $M\in\rMod{R}$ is free of rank $n\in\N$,  let $W=\End(M_R)$ and let $\alpha\in\aEnd(W)$.
        Then $(K_\alpha)_1^{n}\cong R^{n}$ as right $R$-modules.
        (Recall that $(K_\alpha)_1$ means ``$K_\alpha$, considered as a right $R$-module
        w.r.t.\ $\mul{1}$''.)
    \end{prp}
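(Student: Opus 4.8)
The plan is to realize $(K_\alpha)_1^{\,n}$ as $W\otimes_\alpha M$ and then invoke the identification $W\otimes_\alpha M\cong M$ coming from Remark~\ref{GEN:RM:twisting-remark}. The only input I need from freeness is a \emph{left} $W$-module statement: since $M_R$ is free of rank $n$, we have $W=\End(M_R)\cong\nMat{R}{n}$, and hence ${}_WM^n\cong{}_WW$. Concretely, ${}_WM$ is the column module $We_{11}$ and $W=\bigoplus_{i=1}^nWe_{ii}$ with $We_{ii}\cong We_{11}$ (right multiplication by the matrix units $e_{1i}$, $e_{i1}$), so summing the columns gives ${}_WM^n\cong{}_WW$. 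I would record this first.

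The crucial bookkeeping observation is that in $K_\alpha=M\otimes_\alpha M$ the $\mul{1}$-action of $R$ lives on the \emph{second} tensor factor, because $(x\otimes_\alpha y)\mul{1}r=x\otimes_\alpha yr$. Consequently, if I take the $n$-fold power of the \emph{first} factor, $M^n\otimes_\alpha M$, then additivity of $\otimes_\alpha$ in the first variable (Remark~\ref{GEN:RM:twisting-remark}) splits it as $\bigoplus_{i=1}^n(M\otimes_\alpha M)$, and since $\mul{1}$ only touches the common second factor, this direct sum is exactly $(K_\alpha)_1^{\,n}$ as a right $R$-module. Thus powering the first factor already produces $(K_\alpha)_1^{\,n}$, with the advantage that the first factor need only be controlled as a left $W$-module.

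Now I would assemble the chain
\[
(K_\alpha)_1^{\,n}\;\cong\;M^n\otimes_\alpha M\;\cong\;W\otimes_\alpha M\;\cong\;M\;\cong\;R^n ,
\]
all as right $R$-modules. The second isomorphism is induced by ${}_WM^n\cong{}_WW$ applied in the first variable; being the identity on the second factor, it is automatically $\mul{1}$-linear. The third isomorphism is the case $n=1$ of $W^n\otimes_\alpha B\cong B^n$ from Remark~\ref{GEN:RM:twisting-remark}, explicitly $w\otimes_\alpha m\mapsto w^\alpha m$ (equivalently $w\otimes_\alpha m\mapsto m\otimes_W w\mapsto m\twistmul{\alpha}w$ under $W\otimes_\alpha M\cong M^\alpha\otimes_W W$). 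Here one checks $(w\otimes_\alpha m)\mul{1}r=w\otimes_\alpha mr\mapsto w^\alpha(mr)=(w^\alpha m)r$, so the map carries the $\mul{1}$-structure to the ordinary right $R$-structure of $M$. The last isomorphism is $M_R\cong R^n$.

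The only genuine obstacle is the right $R$-module bookkeeping: one must keep track of $\mul{1}$ (not $\mul{0}$) and verify at each arrow that the isomorphism is $\mul{1}$-linear. Choosing to power the first factor is precisely what makes this painless. Had I powered the second factor instead, I would have had to transport the right $R$-structure through the \emph{bimodule} isomorphism $M^n\cong W$ and then through $W\otimes_\alpha(-)$, where the twist by $\alpha$ obscures the $R$-action (and where $\alpha$ need not be bijective). With the first-factor route, freeness enters only through ${}_WM^n\cong{}_WW$ and the final $\mul{1}$-linearity check is immediate, so no real difficulty remains.
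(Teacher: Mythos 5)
Your proof is correct. Every step is licensed by Remark~\ref{GEN:RM:twisting-remark}: $\otimes_\alpha$ is biadditive in the first variable, and $W\otimes_\alpha M\cong M$ via $w\otimes_\alpha m\mapsto w^\alpha m$ (with inverse $m\mapsto 1\otimes_\alpha m$; this needs no bijectivity of $\alpha$). Your $\mul{1}$-linearity checks are exactly right, since all your first-variable maps are the identity on the second factor, and freeness enters only through ${}_WM^n\cong{}_WW$. However, the route is organized differently from the paper's. The paper first invokes Proposition~\ref{GEN:PR:K-alpha-computation} (with $e=e_{11}$ and $U=W=\nMat{R}{n}$) to identify $K_\alpha$ with $e_{11}^\alpha We_{11}$, and then decomposes $We_{11}=\bigoplus_{i=1}^n e_{ii}^\alpha We_{11}$ using the orthogonal idempotents $e_{ii}^\alpha$ (which sum to $1^\alpha=1$), each summand being isomorphic to $(K_\alpha)_1$ via left multiplication by suitable $e_{ij}^\alpha$; this yields $(K_\alpha)_1^n\cong We_{11}\cong R^n$. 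So the paper \emph{splits} $M$ into $n$ twisted copies of $(K_\alpha)_1$, while you \emph{assemble} $n$ copies of $(K_\alpha)_1$ into $M$ through $M^n\otimes_\alpha M\cong W\otimes_\alpha M\cong M$; indeed, if you trace your composite through ${}_WM\cong We_{11}$, the $i$-th summand maps exactly onto $e_{ii}^\alpha We_{11}$, so the two arguments produce the same isomorphism read in opposite directions. What your route buys: it is self-contained, bypassing Proposition~\ref{GEN:PR:K-alpha-computation} and the machinery behind it (Lemma~\ref{GEN:LM:full-idempotent-lemma}, Proposition~\ref{GEN:PR:power-of-a-generic-form}), and it makes the right-module bookkeeping trivial. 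What the paper's route buys: the intermediate identification $K_\alpha\cong e_{11}^\alpha We_{11}$ is an isomorphism of \emph{double} $R$-modules (both $\mul{0}$ and $\mul{1}$ structures), which is reused afterwards (e.g.\ in Theorem~\ref{GEN:TH:when-K-alpha-is-standard}), whereas your argument deliberately controls only the $\mul{1}$-structure — which is all the present proposition asserts.
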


    \begin{proof}
        Assume $M=R^n$ and identify $W$ with $\nMat{R}{n}$. Let $\{e_{ij}\}$ be the standard
        matrix units of $W$. Then by Proposition~\ref{GEN:PR:K-alpha-computation}, $K_\alpha\cong e_{11}^\alpha W e_{11}$
        (take $e=e_{11}$). Consider $K_i:=e_{ii}^\alpha W e_{11}$ as a \emph{right $R$-module}. Then $K_i\cong K_j$
        for all $i,j$ (the isomorphism being multiplication on the left by $e_{ij}^\alpha$).
        Thus, $(K_\alpha)_1^n\cong K_1\oplus \dots\oplus K_n=(\sum_i e_{ii}^\alpha) We_{11}=We_{11}\cong R_R^n$
        as right $R$-modules.
    \end{proof}

    \begin{lem}\label{GEN:LM:dfn-of-phi}
        Fix a double $R$-module $K$. For $M\in\rMod{R}$, define $\Phi_M:M\to M^{[1][0]}$ by
        $(\Phi_Mx)f=f(x)$ for all $x\in M$ and $f\in M^{[1]}$ (see section~\ref{section:FORM:definitions} for
        the definitions of $[0]$ and $[1]$). Then:
        \begin{enumerate}
            \item[(i)] $\{\Phi_M\}_{M\in\rMod{R}}$ is a \emph{natural transformation} from
            $\id_{\rMod{R}}$ to $[0][1]$ (i.e.\ for all $N,N'\in\rMod{R}$ and $f\in \Hom_R(N,N')$, one has
            $f^{[1][0]}\circ\Phi_N=\Phi_{N'}\circ f$).
            \item[(ii)] $\Phi$ is additive (i.e.\ $\Phi_{N\oplus N'}=\Phi_N\oplus\Phi_{N'}$ for all $N,N'\in\rMod{R}$).
            \item[(iii)] $(\rAd{b})^{[0]}\circ\Phi_M=\lAd{b}$ for every general bilinear form $b:M\times M\to K$.
            \item[(iv)] $R^{[1][0]}$ can be identified with $\End_R(K_1)$. Under that identification,
            $(\Phi_Rr)k=k\mul{0} r$ for all $r\in R$ and $k\in K$.
        \end{enumerate}
    \end{lem}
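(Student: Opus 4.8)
The plan is to prove all four parts by direct computation; the only thing needing care throughout is tracking which of the two actions $\mul{0},\mul{1}$ is in force at each step, since $K_0$ and $K_1$ have the same underlying abelian group. Before addressing (i)--(iv) I would first check that $\Phi_M$ is well defined. For $x\in M$ and $f\in M^{[1]}=\Hom_R(M,K_0)$ one has $(\Phi_Mx)(fr)=(fr)(x)=(f(x))\mul{1}r=\bigl((\Phi_Mx)f\bigr)\mul{1}r$, so $\Phi_Mx$ is right $R$-linear from $M^{[1]}$ into $K_1$, i.e.\ $\Phi_Mx\in M^{[1][0]}$; and $(\Phi_M(xr))(f)=f(xr)=(f(x))\mul{0}r=\bigl((\Phi_Mx)r\bigr)(f)$, so $\Phi_M$ itself is right $R$-linear.

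For (i), recall that for $f\in\Hom_R(N,N')$ the map $f^{[1]}$ sends $g\mapsto g\circ f$, whence $f^{[1][0]}$ sends $h\mapsto h\circ f^{[1]}$. Evaluating on $g\in (N')^{[1]}$ gives
\[
\bigl(f^{[1][0]}(\Phi_Nx)\bigr)(g)=(\Phi_Nx)(g\circ f)=(g\circ f)(x)=g(f(x))=\bigl(\Phi_{N'}(f(x))\bigr)(g),
\]
which is exactly $f^{[1][0]}\circ\Phi_N=\Phi_{N'}\circ f$. Part (ii) then follows formally: each dual $[i]=\Hom_R(-,K_{1-i})$ is an additive functor, so the composite $[0][1]$ is additive, and a natural transformation between additive functors is automatically compatible with finite biproducts; alternatively one applies (i) to the inclusions and projections attached to $N\oplus N'$.

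For (iii), I would evaluate on $y\in M$. Since $(\rAd{b})^{[0]}(\Phi_Mx)=(\Phi_Mx)\circ\rAd{b}$,
\[
\bigl((\Phi_Mx)\circ\rAd{b}\bigr)(y)=(\Phi_Mx)(\rAd{b}y)=(\rAd{b}y)(x)=b(x,y)=(\lAd{b}x)(y),
\]
giving $(\rAd{b})^{[0]}\circ\Phi_M=\lAd{b}$ at once.

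The step requiring the most care is (iv), as it involves unwinding the double dual. I would use the canonical identification $R^{[1]}\cong K_1$ via $f\leftrightarrow f(1)$, which is an isomorphism of right $R$-modules (as observed after the definition of the $i$-dual). Applying $[0]$ gives $R^{[1][0]}\cong (K_1)^{[0]}=\Hom_R(K_1,K_1)=\End_R(K_1)$. To identify the image of $\Phi_Rr$, write $k=f(1)$; then $(\Phi_Rr)(f)=f(r)=f(1\cdot r)=f(1)\mul{0}r=k\mul{0}r$, so $\Phi_Rr$ corresponds to the map $k\mapsto k\mul{0}r$. Finally, the double-module axiom $(k\mul{1}s)\mul{0}r=(k\mul{0}r)\mul{1}s$ shows this map commutes with the $\mul{1}$-action and hence genuinely lies in $\End_R(K_1)$, confirming the stated formula $(\Phi_Rr)(k)=k\mul{0}r$.
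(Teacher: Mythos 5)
Your proof is correct and takes essentially the same route as the paper: parts (i)--(iii) by the same direct evaluations (your computation for (iii) is the one the paper spells out verbatim), and part (iv) via the same identification $R^{[1]}\cong K_1$, $f\mapsto f(1)$, followed by applying $[0]$ to get $R^{[1][0]}\cong\End_R(K_1)$ and checking $(\Phi_Rr)k=k\mul{0}r$. The extra details you supply (well-definedness of $\Phi_M$, the additivity argument in (ii), and the double-module axiom confirming $k\mapsto k\mul{0}r$ lies in $\End_R(K_1)$) are exactly the steps the paper dismisses as straightforward or routine, and they are all verified correctly.
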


    \begin{proof}
        (i)--(iii) are  straightforward computation.
        (Recall that for $f\in\Hom_R(N,N')$, we have $f^{[i]}(\vphi)=\vphi\circ f$ for all $\vphi\in N^{[i]}$.)
        For example, $(\rAd{b})^{[0]}\circ\Phi_M=\lAd{b}$ holds since for all $x,y\in M$, we have
        $(\lAd{b}x)y=b(x,y)=(\rAd{b}y)x=(\Phi_Mx)(\rAd{b}y)=((\rAd{b})^{[0]}(\Phi_Mx))y$.

        (iv) We have $R^{[1]}=\Hom_R(R_R,K_0)\cong K_1$ via $f\mapsto f(1)$ ($f\in R^{[1]}$)
        and hence $R^{[1][0]}\cong K_1^{[0]}=\Hom_R(K_1,K_1)=\End_R(K_1)$. It is now routine to verify
        that under that isomorphism, the map $\Phi_R(r)$ is just $[k\mapsto k\mul{0}r]\in\End_R(K_1)$.
    \end{proof}

    \begin{thm}\label{GEN:TH:when-K-alpha-is-standard}
        Let $n\in\N$, $M\in\rMod{R}$, $W=\End(M_R)$ and $\alpha\in\aEnd(W)$. Assume that $M^k\cong R^n$ for some $k\in\N$
        and $N^n\cong R^n$ implies $N\cong R_R$ for all $N\in \rMod{R}$ (e.g.\ if $R$ is semilocal or a principal
        ideal domain). Then:
        \begin{enumerate}
            \item[(i)] There exists $\gamma=\gamma(\alpha)\in\aEnd(R)$ such that
            $K_\alpha$ is isomorphic to the standard double $R$-module of $(R,\gamma)$ (see Example~\ref{FORM:EX:base-example}).
            The anti-endomorphism $\gamma$ is unique up to composition with an inner automorphism of $R$.
            \item[(ii)] $\alpha\in\aAut(W)$ $\iff$ $\gamma\in\aAut(R)$.
            \item[(iii)] There exists $\lmb\in W$ with $\lmb^\alpha\lmb=1$ (resp.\ $\lmb^\alpha\lmb\in\units{W}$)
            and $w^{\alpha\alpha}\lmb=\lmb w$ for all $w\in W$ $\iff$
            there exists $\mu\in R$ with $\mu^\gamma\mu=1$ (resp.\ $\mu^\gamma\mu\in\units{R}$) and $r^{\gamma\gamma}\mu=\mu r$
            for all $r\in R$.\footnote{
                The elements $\lmb$, $\mu$ are invertible when $\alpha$, $\gamma$ are invertible, respectively; see Remark~\ref{GEN:RM:lmb-is-not-invertible}.
            }
            \item[(v)] The map $\alpha\mapsto \gamma(\alpha)$ gives rise to injective maps
            \begin{align*}
            \Inn(W)\setminus\!\aEnd(W)& \to  \Inn(R)\setminus\!\aEnd(R),\\
            \Inn(W)\setminus\!\aAut(W)& \to  \Inn(R)\setminus\!\aAut(R)\ .
            \end{align*}
            If $M\cong R^n$, then these maps are bijective.
        \end{enumerate}
    \end{thm}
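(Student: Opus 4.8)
The plan is to deduce all four parts from an explicit identification of $K_\alpha$, so the first move is to observe that $M$ is a \emph{progenerator}. Since $M^k\cong R^n$, the module $M$ is a direct summand of a finitely generated free module, hence finitely generated projective; and $R_R$ is a summand of $R^n\cong M^k$, so $M$ is a generator. Thus $M$ is a progenerator, and Corollary~\ref{GEN:CR:main-thm-I} makes $b_\alpha$ right regular for \emph{every} $\alpha\in\aEnd(W)$ (and likewise $b_\gamma$ over $R$, which is free). This blanket right regularity is exactly what activates the converse directions of Proposition~\ref{GEN:PR:K-alpha-isomorphic-to-K-beta} and Proposition~\ref{GEN:PR:basic-properties-of-b-alpha-II}, which the later parts rely on.

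For part (i) I would first reduce to a free module. By Proposition~\ref{GEN:PR:power-of-a-generic-form}, $b_{T_k\alpha}\sim k\cdot b_\alpha$, so $K_\alpha\cong K_{T_k\alpha}$ as double $R$-modules; transporting $T_k\alpha$ along an isomorphism $\sigma\colon M^k\xrightarrow{\sim}R^n$ yields $\alpha'\in\aEnd(\nMat{R}{n})$ with $K_\alpha\cong K_{\alpha'}$, the base module now being free of rank $n$. Proposition~\ref{GEN:CR:dimesion-of-K-alpha-cor} gives $(K_{\alpha'})_1^n\cong R^n$, and the cancellation hypothesis forces $(K_\alpha)_1\cong(K_{\alpha'})_1\cong R_R$. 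Identifying the additive group of $K_\alpha$ with $R$ so that $\mul{1}$ is right multiplication, the double-module axiom shows each map $k\mapsto k\mul{0}a$ is a right $R$-endomorphism of $R_R$, hence left multiplication by an element $a^\gamma$; the assignment $a\mapsto a^\gamma$ is then an anti-endomorphism $\gamma\in\aEnd(R)$ and $K_\alpha$ is precisely the standard double $R$-module of $(R,\gamma)$. Uniqueness up to inner automorphism follows because that standard module is isomorphic to $K_\gamma$ (Example~\ref{FORM:EX:base-example} with Lemma~\ref{GEN:LM:when-regular-implies-generic}), so two choices $\gamma,\gamma'$ give $K_\gamma\cong K_{\gamma'}$, and the converse part of Proposition~\ref{GEN:PR:K-alpha-isomorphic-to-K-beta} (both forms being right regular) produces $\vphi\in\Inn(R)$ with $\gamma'=\vphi\circ\gamma$.

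Parts (ii) and (iii) then flow from the single isomorphism $K_\alpha\cong K_\gamma$ of double $R$-modules. For (iii), Proposition~\ref{GEN:PR:basic-properties-of-b-alpha-II} converts the existence of $\lmb$ (resp.\ $\mu$) with the stated properties into the existence of an anti-automorphism, resp.\ involution, of $K_\alpha$ (resp.\ $K_\gamma$); since these are intrinsic to the double-module structure, $K_\alpha\cong K_\gamma$ makes the two existence statements equivalent. For (ii) I would use that, $b_\alpha$ being right regular, $\alpha$ is bijective iff $b_\alpha$ is left regular (Corollary~\ref{GEN:CR:generic-forms-over-generators} for one direction, the discussion in Section~\ref{section:FORM:definitions} for the other). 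By Lemma~\ref{GEN:LM:dfn-of-phi}(iii) and the bijectivity of $\rAd{b_\alpha}$, left regularity of $b_\alpha$ is equivalent to $\Phi_M$ being bijective; naturality and additivity of $\Phi$ (Lemma~\ref{GEN:LM:dfn-of-phi}(i),(ii)) together with $M^k\cong R^n$ reduce this to bijectivity of $\Phi_R$, which by Lemma~\ref{GEN:LM:dfn-of-phi}(iv) is exactly the map $\gamma$ under our identification. Hence $\alpha$ is bijective iff $\gamma$ is.

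Finally, for (v) the map $\alpha\mapsto\gamma(\alpha)$ is well defined on $\Inn$-orbits and injective: if $\beta=\vphi\circ\alpha$ with $\vphi\in\Inn(W)$ then $K_\beta\cong K_\alpha$ by the forward part of Proposition~\ref{GEN:PR:K-alpha-isomorphic-to-K-beta}, so $K_{\gamma(\beta)}\cong K_{\gamma(\alpha)}$ and the converse part over $R$ puts $\gamma(\beta)$ in the $\Inn(R)$-orbit of $\gamma(\alpha)$; conversely, $\gamma(\alpha)$ and $\gamma(\beta)$ lying in one $\Inn(R)$-orbit forces $K_\alpha\cong K_\beta$, whence $\beta$ lies in the $\Inn(W)$-orbit of $\alpha$. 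Part (ii) shows the correspondence restricts correctly to the anti-automorphism versions. When $M\cong R^n$ I would prove surjectivity by producing a preimage: given $\gamma\in\aEnd(R)$ set $\alpha:=T_n\gamma$; then Proposition~\ref{GEN:PR:power-of-a-generic-form} gives $K_\alpha\cong K_\gamma$, so $\gamma(\alpha)=\gamma$ up to inner automorphism, and $T_n\gamma$ is bijective exactly when $\gamma$ is, handling both maps. I expect the main obstacle to be part (i): arranging the reduction to a free module so that the dimension count of Proposition~\ref{GEN:CR:dimesion-of-K-alpha-cor} and the cancellation hypothesis combine to pin down $(K_\alpha)_1$. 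Once $(K_\alpha)_1\cong R_R$ is secured, the remaining parts are essentially formal consequences of the intrinsic isomorphism $K_\alpha\cong K_\gamma$.
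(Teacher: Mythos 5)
Your proposal is correct and follows essentially the same route as the paper's proof: reduce to $M=R^n$ via Proposition~\ref{GEN:PR:power-of-a-generic-form}, pin down $(K_\alpha)_1\cong R_R$ by combining Proposition~\ref{GEN:CR:dimesion-of-K-alpha-cor} with the cancellation hypothesis, read $\gamma$ off the $\mul{0}$-action on $(K_\alpha)_1$, and then deduce (ii), (iii) and (v) from Lemma~\ref{GEN:LM:dfn-of-phi}, Proposition~\ref{GEN:PR:basic-properties-of-b-alpha-II} and Proposition~\ref{GEN:PR:K-alpha-isomorphic-to-b-beta}-style transfer via Proposition~\ref{GEN:PR:K-alpha-isomorphic-to-K-beta}, with $T_n\gamma$ supplying surjectivity exactly as in the paper. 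The only (cosmetic) deviation is your uniqueness argument in (i), which routes through the identification of the standard double module with $K_\gamma$ and the converse of Proposition~\ref{GEN:PR:K-alpha-isomorphic-to-K-beta}, where the paper instead checks directly that two standard double modules are isomorphic precisely when the underlying anti-endomorphisms differ by an inner automorphism.
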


    \begin{proof}
        By Proposition~\ref{GEN:PR:power-of-a-generic-form}, we may replace $M$ with $M^k$ and henceforth assume $M=R^n$.
        Throughout, $S_\gamma$ denotes the standard double $R$-module of $(R,\gamma)$. Recall that $S_\gamma=R$ as sets
        and $k\mul{0}r=r^\gamma k$ and $k\mul{1}r=kr$ for all $k,r\in R$.

        (i)
        By Proposition~\ref{GEN:CR:dimesion-of-K-alpha-cor}, $(K_\alpha)_1^n\cong R^n$, so by assumption, $(K_\alpha)_1\cong R_R$.
        Choose a basis $\{k\}$ to  $(K_\alpha)_1$. Then for all $r\in R$, there is a unique $r^\gamma\in R$ such that
        $k\mul{0} r=k\mul{1} r^\gamma$. The map $\gamma$ is easily seen to be an anti-automorphism of $R$, and
        it is routine to verify that $S_\gamma\cong K_\alpha$ via  $r\mapsto k\mul{1}r$.

        To see that $\gamma$ is unique up to composition with an inner-automorphism, it enough to
        show that for all $\gamma,\gamma'\in\aEnd(R)$,
        $S_\gamma\cong S_{\gamma'}$ $\iff$ there exists $u\in \units{R}$
        such that $r^\gamma=u^{-1}r^{\gamma'}u$ for all $r\in R$.
        Indeed, since $(S_\gamma)_1=(S_{\gamma'})_1=R_R$, we have $S_\gamma\cong S_{\gamma'}$ $\iff$
        there exists $u\in\units{R}$ such that
        $u(r^\gamma k)=r^{\gamma'}(uk)$ for all $r,k\in R$.
        But $u(r^\gamma k)=r^{\gamma'}(uk)$ for all $r,k\in R$
        $\iff $$ur^\gamma=r^{\gamma'}u$ for all $r\in R$ $\iff$ $r^\gamma=u^{-1}r^{\gamma'}u$ for all $r\in R$, so we are done.

        (ii)
        Identify $K:=K_\alpha$ with $S_\gamma$, let $b=b_\alpha$, and let $\Phi_M$ be as in Lemma~\ref{GEN:LM:dfn-of-phi}.
        By that lemma, we have $(\rAd{b})^{[0]}\circ \Phi_M=\lAd{b}$. Since $\rAd{b}$ is bijective (Theorem~\ref{GEN:TH:main-thm-II}),
        $\lAd{b}$ is bijective if and only if $\Phi_M$ is bijective, and since $M=R^n$ and $\Phi$ is additive, the latter is equivalent
        to $\Phi_R$ being bijective. Identifying $R^{[1][0]}$ with $\End_R((S_\gamma)_1)=\End_R(R_R)\cong R$ as in
        Lemma~\ref{GEN:LM:dfn-of-phi}(iv), we see that $\Phi_R$ is just the map $\gamma$. Therefore, $\lAd{b}$ is bijective
        if and only if $\gamma$ is.

        Now, if $\gamma$ is bijective, then $b$ is right and left regular, implying $\alpha$ is invertible (its inverse
        is the \emph{left} corresponding anti-endomorphism of $b$, which exists since $b$ is left regular). On the other hand,
        if $\alpha$ is invertible, then $b$ is left regular by Theorem~\ref{GEN:TH:main-thm-II}, implying $\gamma\in \aAut(R)$.

        (iii)
        Define $b':R\times R\to S_\gamma$ by $b'(x,y)=x^\gamma y$. By Example~\ref{FORM:EX:base-example}, $c$ is right
        regular with corresponding anti-endomorphism $\gamma$, so by Theorem~\ref{GEN:TH:main-thm-II}, $b'\sim b_\gamma$.
        Now, by Proposition~\ref{GEN:PR:basic-properties-of-b-alpha-II}, the existence of $\mu$ as above is equivalent
        to the existence of an involution (resp.\ anti-automorphism) on $S_\gamma$. However,
        the same proposition implies that the existence of
        $\lmb$ as above is equivalent to the existence of an involution (resp.\ anti-automorphism)
        on $K_\alpha$. Since $K_\alpha\cong S_\gamma$, we are done.

        (iv)
        Let $\alpha,\alpha'\in\aEnd(W)$ and assume $\Inn(R)\gamma(\alpha)=\Inn(R)\gamma(\alpha')$.
        By the proof of (i), this means $K_\alpha\cong K_{\alpha'}$, so by Proposition~\ref{GEN:PR:K-alpha-isomorphic-to-K-beta},
        $\Inn(W)\alpha=\Inn(W)\alpha'$, as required.

        To finish, assume $M\cong R^n$ and let $\gamma\in\aEnd(R)$. Define $b_\gamma$ as
        in (iii). Then by Proposition~\ref{GEN:PR:power-of-a-generic-form}, $n\cdot b_\gamma\sim b_{T_n\gamma}$.
        Since $M\cong R^n$, we may view $b_{T_n\gamma}$ as a form on $M$,
        and hence assume $T_n\gamma\in W$. But now we have $K_{T_n\gamma}\cong K_\gamma\cong S_\gamma$,
        so  by definition, $\gamma(T_n\gamma)=\gamma$.
    \end{proof}

    As a special case of the theorem, we get that for every ring $R$ such that $N^n\cong R^n$ implies $N\cong R_R$ ($N\in\rMod{R}$; $n$ fixed),
    we have a set bijection
    \begin{equation}\label{GEN:EQ:anti-auto-of-matrices}
        \begin{array}{ccc}
        \Inn(R)\setminus\!\aEnd(R) &\quad\cong\quad&  \Inn(\nMat{R}{n})\setminus\!\aEnd(\nMat{R}{n}) \\
        \Inn(R)\cdot \gamma &\mapsto&  \Inn(\nMat{R}{n})\cdot T_n\gamma
        \end{array}
    \end{equation}
    which maps $\Inn(\nMat{R}{n})\setminus\!\aAut(\nMat{R}{n})$ to  $\Inn(R)\setminus\!\aAut(R)$.
    In particular,
    $R$ has an anti-automorphism (resp.\ anti-endomorphism) $\iff$ $\nMat{R}{n}$ has an anti-automorphism (resp.\ anti-endomorphism).
    In case $\aAut(R)\neq\phi$ (e.g.\ if $R$ is commutative), we also get a group isomorphism
    \[
    \Out(R)\cong \Out(\nMat{R}{n})
    \]
    where $\Out(R)$ is the \emph{outer automorphism group} of $R$, namely $\Aut(R)/\Inn(R)$.
    Indeed, let $W=\nMat{R}{n}$ and fix some $\gamma_0\in \aAut(R)$.
    Observe that $\Out(R)\cong \Inn(R)\setminus\!\aAut(R)$ as sets via $\Inn(R)\vphi\mapsto \Inn(R)(\vphi\circ\gamma_0)$
    and likewise, $\Out(W)\cong \Inn(W)\setminus\!\aAut(W)$ via $\Inn(W)\psi\mapsto \Inn(W)(\psi\circ T_n\gamma_0)$. Composing
    these isomorphisms with the bijection in \eqref{GEN:EQ:anti-auto-of-matrices}
    gives a set bijection $\Out(R)\to\Out(W)$ sending the coset
    of $\vphi\in \Aut(R)$ to the coset of the automorphism $[(r_{ij})_{i,j}\mapsto (\vphi(r_{ij}))_{i,j}]\in \Aut(W)$, but
    this bijection is also a group homomorphism.

    \begin{remark}
        (i)
        In general, even when $M$ is a progenerator, very little can be said about the structure of $K_\alpha$.
        For example, the base module $M$ of the regular bilinear
        form $b:M\times M\to K$ constructed in Example~\ref{GEN:EX:form-over-incidence-algebra} is a progenerator.
        Since $b$ is regular (routine), it is similar to $b_\alpha$ for some  $\alpha\in\aAut(\End_R(M))$ (Theorem~\ref{GEN:TH:main-thm-II}),
        what allows
        us to identify $K$ with $K_\alpha$.
        However, $(K_1)^m$ is not free for all $m\in \N$, in contrast to Proposition~\ref{GEN:CR:dimesion-of-K-alpha-cor}
        and Theorem~\ref{GEN:TH:when-K-alpha-is-standard}(i).

\rem{
        (ii) Theorem~\ref{GEN:TH:when-K-alpha-is-standard} and
        Corollary~\ref{GEN:CR:anti-automorphism-transfer} fail completely if we drop the assumption that $N^n\cong R^n$ implies $N\cong R_R$;
        Example~\ref{GEN:EX:last-example} below
        presents a noetherian ring $S$ such that $\aAut(S)=\phi$ but $\aAut(\nMat{S}{4})\neq \phi$.
}

        (ii) It is possible to prove Theorem~\ref{GEN:TH:when-K-alpha-is-standard} without using the form
        $b_\alpha$ explicitly:
        Identify $M=R^n$ with $\nMat{R}{n}e_{11}$ and define
        $b:M\times M\to K:=e_{11}^\alpha\nMat{R}{n}e_{11}$ by $b(xe_{11},ye_{11})=e_{11}^\alpha x^\alpha ye_{11}$.
        Prove directly that $b$ is right regular and $(K_1)^n\cong R^n$, and proceed with $b$, $K$ in place of $b_\alpha$, $K_\alpha$.

        (iii) Let $\alpha\in\aEnd(W)$ with $W$ as in Theorem~\ref{GEN:TH:when-K-alpha-is-standard}.
        If there is $\vphi\in\Inn(W)$ such that $\vphi\circ\alpha$ is an involution, then
        there exists $\lmb\in W$ with $\lmb^\alpha\lmb=1$
        and $w^{\alpha\alpha}\lmb=\lmb w$ for all $w\in W$ (write $\vphi(w)=uwu^{-1}$ and take $\lmb=u^{-1}u^\alpha$). The converse
        is false in general, but it is true when $W$ is semisimple  (see \cite[Pr.\ 2.1]{Reiter75}, for instance).
        Together with Theorem~\ref{GEN:TH:when-K-alpha-is-standard}, this can be used to show that when $R$ is simple artinian, all
        involutions on $W=\End_R(M)$ are induced by regular \emph{$\mu$-hermitian} forms over $R$ ($\mu\in\Cent(R)$). The details
        are left to the reader.
    \end{remark}

\rem{
    The next example, shows that Theorem~\ref{GEN:TH:when-K-alpha-is-standard} fails
    completely if we do not assume that $N^n\cong R^n$ implies $N\cong R$. This is done
    by constructing a ring $R$ such that $R$ has no anti-automorphism, but $\nMat{R}{4}$ has an involution.
    The example use fractional ideals of number fields (see \cite{Rei70} for definitions and details),
    so in order to distinguish between the $n$-th power
    of a fractional ideal $I$ and the direct sum of $n$ copies of that ideal, the latter will
    be denoted by $I^{\oplus n}$.

    \begin{example}\label{GEN:EX:last-example}
        Let $K$ be a number field such that $\Gal(K/\Q)=\{\id_K\}$ and let $D$ be the integral closure
        of $\Z$ in $K$. Assume that there is a fractional $D$-ideal $I$ such that $I$ has order $4$ in the
        class group of $K$ (i.e.\ $I^4$ is principal, but $I^2$ is not principal) and $I^2$
        is not a fourth power in the class group. Let $M=D^{\oplus 3}\oplus I$ and let $R=\End_D(M)$.
        Observe that $M^{\oplus 4}\cong D^{\oplus 16}$, hence $\nMat{R}{4}\cong \nMat{D}{16}$. Thus, $\nMat{R}{4}$ has an involution.
        We claim that $R$ does not have an anti-automorphism. Assume by contradiction that $\alpha\in\aAut(R)$.
        Then $\alpha$ restricts to an automorphism of $D=\Cent(R)$, and hence extends to an automorphism of $K$.
        As $\Gal(K/\Q)=\{\id_K\}$, $\alpha$ must fix $K$, and hence $D$.
        By Proposition~\ref{FORM:PR:morita-context}, there is a regular bilinear form $b:M\times M\to K$
        with $K$ a double $D$-progenerator of the same type as $\alpha$.
        This means that $K$ can be understood as  a fractional $D$-ideal $J$, considered double $D$-module
        by setting $j\mul{0}d=j\mul{1}d=jd$ for all $j\in J$ and $d\in D$.
        Since $b$ is regular, we have an isomorphism $D^{\oplus 3}\oplus I= M\cong M^{[1]}=\Hom_D(M,J)\cong J^{\oplus3}\oplus JI^{-1}$.
        This is well-known to imply $D\cdot I\cong J^3\cdot JI^{-1}$, i.e.\ $I^2\cong J^4$, a contradiction.
        Therefore, we can take $R=S$ if $\nMat{S}{2}$ has an involution, and $R=\nMat{S}{2}$
        otherwise. Explicit choices of  $K$, $D$, $I$ are $K=\Q[x\where x^3=43]$, $D=\Z[x]$,
        $I=\ideal{2,x+1}$ (verified using SAGE).
    \end{example}
}

    We finish this section by specializing to the case where $R$ is a field.

    \begin{prp}\label{GEN:PR:classical-cor-over-fields}
        Let $F$ be a field and let $V$ be a f.d.\ right $F$-vector-space. Let $W=\End_F(V)$
        and identify $F$ with $\Cent(W)$. For all $\alpha\in \aEnd(W)$ we have:
        \begin{enumerate}
            \item[(i)] $\alpha(\Cent(W))\subseteq\Cent(W)$, hence $\alpha$ can be understood as an (anti-)endo\-mor\-phism
            of $F$. In addition, $\alpha|_F$ is bijective precisely when $\alpha$ is.
            \item[(ii)] $K_\alpha$ is isomorphic to the standard double $F$-module of $(F,\alpha|_F)$.
            \item[(iii)]
            There is a one-to-one correspondence between \emph{right regular}
            bilinear forms on $V$, considered up to similarity, and anti-endomorphisms of $W$.
            More precisely, the form $b_\alpha$ is right regular and left injective. It is left regular if and only if $\alpha$
            is bijective.
            \item[(iv)] If $(\alpha|_F)^2=\id_F$, then $b_\alpha$ is similar to a classical regular bilinear form
            (i.e.\ a sesquilinear form) over $(F,\alpha|_F)$. That bilinear form is unique up to multiplying by a scalar (in $\units{F}$).
            Conversely, if $\beta$ is an involution of $F$ and $b_\alpha$ is similar to  a classical
            regular bilinear form over $(F,\beta)$, then $\alpha|_F=\beta$ and in particular, $(\alpha|_F)^2=\id_F$.
            \item[(v)] If $\alpha$ is an involution,
            then one of the following is true:
            \begin{enumerate}
                \item[(1)] $\alpha|_F=\id_F$ and $b_\alpha$ is similar to a symmetric bilinear form over $F$.
                \item[(2)] $\alpha|_F=\id_F$ and $b_\alpha$ is similar to an alternating bilinear form over $F$.
                \item[(3)] $\alpha|_F\neq\id_F$ and $b_\alpha$ is similar to a $1$-hermitian form over $(F,\alpha|_F)$.
            \end{enumerate}
            If moreover $\Char F\neq 2$, then \emph{precisely} one of the above is true. (By definition, the cases (1),(2),(3) correspond
            to the cases where $\alpha$ is orthogonal, symplectic or unitary, respectively.)
        \end{enumerate}
    \end{prp}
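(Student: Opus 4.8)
The plan is to deduce everything from the general results already established, specialized to the case where the base ring is the field $F$ and the module is $V\cong F^n$ with $n=\dim_F V$ (which we may assume positive). Here $V$ is a progenerator, and $F$ satisfies the cancellation hypothesis of Theorem~\ref{GEN:TH:when-K-alpha-is-standard}: over a field, $N^n\cong F^n$ forces $\dim_F N=1$, so $N\cong F_F$. Since $F$ is commutative, $\Inn(F)$ is trivial, so the anti-endomorphism $\gamma=\gamma(\alpha)\in\aEnd(F)$ produced by that theorem is genuinely unique, satisfies $K_\alpha\cong S_\gamma$ (the standard double $F$-module of $(F,\gamma)$, Example~\ref{FORM:EX:base-example}), and has $\alpha\in\aAut(W)$ exactly when $\gamma\in\aAut(F)$. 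The whole proof then rests on identifying $\gamma$ with $\alpha|_F$.

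For (i) I would first show $\alpha(\Cent W)\subseteq\Cent W$. As $W=\nMat{F}{n}$ is simple, $\ker\alpha$ (a two-sided ideal) is $0$, so $\alpha$ is injective and $\{\alpha(e_{ij})\}$ is, after reindexing, a complete system of $n\times n$ matrix units summing to $1$; any such system is conjugate to the standard one, so its centralizer in $W$ equals $\Cent W=F$. For central $z$, the element $z^\alpha$ commutes with $\alpha(W)$, hence with all $\alpha(e_{ij})$, so $z^\alpha\in F$; thus $\alpha|_F$ is a (necessarily injective) field endomorphism. To identify it with $\gamma$, take a scalar $z=c\cdot\id_V$ ($c\in F$); since $xc=zx$ in $V$ and $z^\alpha=\alpha|_F(c)\cdot\id_V$, the defining relation of $b_\alpha$ gives, for all $x,y\in V$,
\[
b_\alpha(x,y)\mul{0}c=b_\alpha(xc,y)=b_\alpha(zx,y)=b_\alpha(x,z^\alpha y)=b_\alpha(x,y)\mul{1}\bigl(\alpha|_F(c)\bigr).
\]
As $K_\alpha$ is generated by the values of $b_\alpha$, we get $k\mul{0}c=k\mul{1}(\alpha|_F(c))$ for all $k$, which is precisely the relation defining $\gamma$ on $S_\gamma\cong K_\alpha$; comparing forces $\gamma=\alpha|_F$. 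Part (ii) and the equivalence ``$\alpha|_F$ bijective iff $\alpha$ bijective'' then read off from Theorem~\ref{GEN:TH:when-K-alpha-is-standard}(i),(ii) (the forward direction also following by applying the center argument to $\alpha^{-1}$). For (iii), the correspondence is the progenerator case of Theorem~\ref{GEN:TH:main-thm-II}; $b_\alpha$ is right regular by Corollary~\ref{GEN:CR:main-thm-I} (as $V$ is f.g.\ projective) and left injective by Corollary~\ref{GEN:CR:generic-forms-over-generators} (as $V$ is a generator and $\alpha$ is injective), and it is left regular exactly when $\alpha$—equivalently $\gamma$—is bijective, by Theorem~\ref{GEN:TH:when-K-alpha-is-standard}(ii).

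For (iv), when $(\alpha|_F)^2=\id_F$ the module $S_{\alpha|_F}$ is exactly the one attached to sesquilinear forms over $(F,\alpha|_F)$ in Example~\ref{GEN:EX:standard-classical-bilinear-forms}; transporting $b_\alpha$ across $K_\alpha\cong S_{\alpha|_F}$ turns it into a classical form over $(F,\alpha|_F)$, which is regular because $\alpha|_F$, being an involution, is bijective, so $\alpha$ is bijective and $b_\alpha$ is two-sided regular. Uniqueness up to scaling is the fact recorded in section~\ref{section:GEN:preface} that similarity of classical forms over a field means equality up to an element of $\units{F}$. For the converse, a classical regular form over $(F,\beta)$ takes values in $S_\beta$, so similarity yields $S_{\alpha|_F}\cong S_\beta$; uniqueness of $\gamma$ (i.e.\ $S_\gamma\cong S_{\gamma'}\Rightarrow\gamma=\gamma'$ over a field) forces $\alpha|_F=\beta$, whence $(\alpha|_F)^2=\beta^2=\id_F$.

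For (v), suppose $\alpha$ is an involution, so $\gamma=\alpha|_F$ is an involution of $F$ and $b_\alpha$ is $\theta_\alpha$-symmetric. Transport $\theta_\alpha$ to an involution $\theta$ of $S_\gamma$; by the $R=F$ case of Proposition~\ref{GEN:PR:basic-properties-of-b-alpha-II} (the centrality condition being automatic over the commutative $F$), $\theta$ has the form $r\mapsto\mu r^{\gamma}$ with $\mu^{\gamma}\mu=1$, so $b_\alpha$, viewed in $S_\gamma$, is $\mu$-hermitian. If $\gamma\neq\id_F$, then $F$ is a quadratic extension of the fixed field of $\gamma$, and Hilbert 90 provides $\nu\in\units{F}$ with $\mu=\nu^{\gamma}\nu^{-1}$; rescaling by $\nu$ (a similarity in $S_\gamma$) produces a $1$-hermitian form, giving case (3). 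If $\gamma=\id_F$ then $\mu^2=1$, so $\mu=\pm1$: for $\mu=1$ the form is symmetric (case (1)); for $\mu=-1$ it is skew-symmetric, which forces $\Char F\neq2$ and hence alternating (case (2)). Finally, for $\Char F\neq2$ the three cases are mutually exclusive: the value of $\alpha|_F$ separates (3) from (1),(2), and a nonzero form (which $b_\alpha$ is, being regular with $V\neq0$) cannot be both symmetric and alternating. The steps I expect to require the most care are the center-preservation lemma underlying the identification $\gamma=\alpha|_F$, and the normalizations in (v)—pinning down $\theta_\alpha$ as an explicit $\mu$-hermitian structure and then removing $\mu$ by Hilbert 90 and by the skew-implies-alternating passage in characteristic $\neq 2$.
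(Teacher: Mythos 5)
Your proposal is correct and follows essentially the same route as the paper: specialize Theorem~\ref{GEN:TH:when-K-alpha-is-standard} to $R=F$, identify $\gamma(\alpha)=\alpha|_F$ via the same computation $b_\alpha(x,y)\mul{0}c=b_\alpha(cx,y)=b_\alpha(x,c^\alpha y)=b_\alpha(x,y)\mul{1}c^\alpha$, obtain (iii) from Theorem~\ref{GEN:TH:main-thm-II} and Corollary~\ref{GEN:CR:generic-forms-over-generators}, and settle (iv)--(v) through Example~\ref{GEN:EX:standard-classical-bilinear-forms} and Hilbert~90, your only deviations being local: you route the bijectivity equivalence in (i) and the classification of involutions of $S_\gamma$ in (v) through Theorem~\ref{GEN:TH:when-K-alpha-is-standard}(ii) and Proposition~\ref{GEN:PR:basic-properties-of-b-alpha-II} where the paper argues directly (a dimension count on $\alpha(W)$, and $\lmb:=1_F^\theta$), and you supply the mutual-exclusivity check for $\Char F\neq 2$ that the paper leaves implicit. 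One clause needs repair: ``for $\mu=-1$ it is skew-symmetric, which forces $\Char F\neq 2$'' is backwards --- skew-symmetry does not force $\Char F\neq 2$; rather, if $\Char F=2$ then $\mu=-1$ coincides with $\mu=1$ and case (1) applies, while if $\mu=-1\neq 1$ then $\Char F\neq 2$ and skew-symmetry implies alternating, so the trichotomy survives in every characteristic.
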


    \begin{proof}
        (i) Since $W$ is simple, $\ker\alpha=0$. Fix some matrix units $\{e_{ij}\}$ in $W$. Then $\{e_{ij}^\alpha\}$
        are also matrix units for $W$ and it is easy to see that every element that commutes with $\{e_{ij}^\alpha\}$ lies in $\Cent(W)$.
        As  $\alpha(\Cent(W))$ commutes with $\{e_{ij}^\alpha\}$, we get that $\alpha(\Cent(W))\subseteq\Cent(W)$.
        If $\alpha$ is bijective, then  by the same argument, $\alpha^{-1}(\Cent(W))\subseteq\Cent(W)$, which
        implies $(\alpha|_{F})^{-1}=\alpha^{-1}|_F$. On the other hand, if $\alpha|_F$ is bijective, then $\alpha(W)$
        is an $F$-subalgebra of $W$ (since $\alpha(F)=F$) of dimension $(\dim V)^2$ (since
        $\ker \alpha=0$). Thus,  $\alpha$ is surjective, and hence bijective.

        (ii)
        By Theorem~\ref{GEN:TH:when-K-alpha-is-standard}, $K_\alpha$ is isomorphic to the standard
        double $F$-module of $(F,\gamma)$ for some $\gamma\in\aEnd(F)$. (In
        fact, $\gamma$ is unique since $\Inn(F)=\{\id_F\}$.)  For all $a\in F=\Cent(W)$ and $x,y\in V$, we have
        $b_\alpha(x,y)\mul{0} a=b_\alpha(a x,y)=b_\alpha(x,a^\alpha y)=b_\alpha(x,y)\mul{1} a^\alpha$. This forces $\gamma=\alpha|_F$, as
        required.

        (iii)
        Observe that any $\alpha\in\aEnd(W)$ is injective because $W$ is simple. Everything now follows from
        Theorem~\ref{GEN:TH:main-thm-II} and Corollary~\ref{GEN:CR:generic-forms-over-generators}.

        (iv)
        Assume $(\alpha|_F)^2=\id_F$. Then $\alpha|_F$ is bijective, hence $\alpha$ is bijective (by (i)).
        Thus,
        $b_\alpha$ is regular (by (iii)) and $K_\alpha$ is the standard
        double $F$-module of $(F,\alpha|_F)$, which, according to Example \ref{GEN:EX:standard-classical-bilinear-forms},
        means that $b_\alpha$ is similar to a classical bilinear form $b:V\times V\to F$.
        That $b$ is unique to multiplying by an element of $\units{F}$ easily follows from the fact that $b$
        is unique up to similarity.
        Conversely, if $b_\alpha$ is similar to a regular hermitian form $b:V\times V\to F$ over $(F,\beta)$,
        then for all $a\in F$ and $x,y\in V$, $b(xa,y)=a^\beta b(x,y)=b(x,ya^\beta)$. Thus, $\alpha|_F=\beta$, as required.

        (v) That $\alpha$ is an involution implies $(\alpha|_F)^2=\id_F$, hence by (iv), $b_\alpha$ is similar
        to a classical bilinear form $b:V\times V\to F$ over $(F,\alpha|_F)$.
        Identify $K_\alpha$ with $F$ via the similarity $b_\alpha\sim b$ and consider $\theta:=\theta_\alpha$ (see
        section~\ref{section:GEN:preface}) as an involution of the standard double $F$-module
        of $(F,\alpha|_F)$ (rather than $K_\alpha$). Let $\lmb=1_F^{\theta}$. Then for all $k\in F$,
        $k^{\theta}=(1\mul{1} k)^{\theta}=1^{\theta}\mul{0} k=\lmb k^\alpha$
        and $1=1^{\theta\theta}=\lmb^{\theta}=\lmb\lmb^{\alpha}$. As $b$ is  $\theta$-symmetric, this means that $b$ is $\lmb$-hermitian.
        Now, if $\alpha|_F=\id_F$, then $\lmb^2=1$, hence $\lmb=1$ or $\lmb=-1$ which implies (1) or (2), respectively.
        If $\alpha|_F\neq\id_F$, then by Hilbert's Theorem 90, there is $u\in F$ with $u^\alpha u^{-1}=\lmb$.
        Define $b'(x,y)=ub(x,y)$. Then $b'$ is similar to $b$ and it is routine to check that $b'$ is a $1$-hermitian form
        over $(F,\alpha|_F)$, i.e.\ (3) holds.
    \end{proof}

    \begin{remark}
        The results of the Proposition \ref{GEN:PR:classical-cor-over-fields} are not typical;
        in general, even when $M$ is free and $R$ has an involution, there is no ``nice'' characterization
        of the anti-endomorphisms of $W$ that correspond to \emph{classical} bilinear forms.
    \end{remark}

\section{On a Result of Osborn}
\label{section:GEN:osborn}

    In this section, we use the results of sections \ref{section:GEN:regularity}
    and \ref{section:GEN:generators} to prove  a variant of a theorem of Osborn. Throughout, $\Jac(R)$ is
    the Jacobson radical of the ring $R$.

    \begin{thm}[Osborn]
        Let $(W,\alpha)$ be a  ring with involution such that $2\in \units{W}$ and every element
        $w\in W$ with $w^\alpha=w$ is either a unit or nilpotent.
        Let $\alpha'$ denote the induced
        involution on $W/\Jac(W)$. Then $\Jac(W)\cap \{w\in W\suchthat w^\alpha=w\}$
        consists of nilpotent elements and one of the following holds:
        \begin{enumerate}
            \item[(i)] $W/\Jac(W)$ is a division ring.
            \item[(ii)] $W/\Jac(W)\cong D\times D^\op$ for some division ring $D$ and under that isomorphism
            $\alpha'$ exchanges $D$ and $D^\op$.
            \item[(iii)] $W/\Jac(W)\cong \nMat{F}{2}$ for some field $F$ and under that isomorphism
            $\alpha'$ is a symplectic involution (i.e.\ it is induced by a classical alternating bilinear form over $F$; see \cite[Ch.\ 1]{InvBook}).
        \end{enumerate}
    \end{thm}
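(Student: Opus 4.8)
The plan is to pass to the semisimple quotient $\quo{W}=W/\Jac(W)$ and analyze the induced involution there. First I would dispose of the radical: if $w\in\Jac(W)$ satisfies $w^\alpha=w$, then $w$ cannot be a unit (a unit lying in $\Jac(W)$ would force $\Jac(W)=W$), so by hypothesis $w$ is nilpotent; this gives the assertion about $\Jac(W)\cap\{w\suchthat w^\alpha=w\}$. Next, since $2\in\units{W}$, the symmetrization $w\mapsto\frac{1}{2}(w+w^\alpha)$ lets me lift symmetric elements of $\quo{W}$ to symmetric elements of $W$: given $\quo{w}$ with $\quo{w}^{\alpha'}=\quo{w}$ and any lift $w_0$, the element $w=\frac{1}{2}(w_0+w_0^\alpha)$ is $\alpha$-symmetric and reduces to $\quo{w}$. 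As units and nilpotents map to units and nilpotents (and conversely a symmetric unit or nilpotent lift forces $\quo{w}$ to be a unit or nilpotent), the quotient $(\quo{W},\alpha')$ again satisfies ``every symmetric element is a unit or nilpotent''. In particular, any $\alpha'$-symmetric idempotent $e$ is a unit or nilpotent, and being idempotent this forces $e\in\{0,1\}$; so $\quo{W}$ has only trivial $\alpha'$-symmetric idempotents.

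The core of the argument is that $(\quo{W},\alpha')$, being semiprimitive with these two properties, is forced into one of the three listed forms. When $W$ is semilocal, $\quo{W}$ is automatically semisimple artinian, which is the case our methods cover directly; the reduction to semisimplicity in full generality is the one external ingredient. Assume then that $\quo{W}$ is semisimple, and write $\quo{W}=\prod_i R_i$ as a product of simple artinian rings. Then $\alpha'$ permutes the $R_i$ with orbits of size $1$ or $2$, and an $\alpha'$-stable subproduct yields an $\alpha'$-symmetric central idempotent; triviality of such idempotents therefore forces a single orbit. If the orbit has size two, then $\quo{W}\cong R_1\times R_1^{\op}$ with $\alpha'$ the exchange, an $\alpha'$-symmetric idempotent corresponds to a single idempotent of $R_1$, and triviality forces $R_1$ to have no nontrivial idempotents, i.e.\ $R_1=D$ is a division ring, giving case (ii).

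It remains to treat a single $\alpha'$-stable simple factor $\quo{W}=\End_D(V)$ with $V$ a finite-dimensional right vector space over a division ring $D$. Here I would invoke Theorem~\ref{GEN:TH:main-thm-II} together with Proposition~\ref{GEN:PR:classical-cor-over-fields}: the involution $\alpha'$ is induced by a regular $\theta$-symmetric (hence hermitian or alternating) form $b$ on $V$, and $\alpha'$-symmetric idempotents correspond to orthogonal projections onto nondegenerate subspaces, so triviality of these idempotents says $(V,b)$ is orthogonally indecomposable. Since $2\in\units{\quo{W}}$, a hermitian form diagonalizes orthogonally, and indecomposability then forces $\dim_D V=1$ and $\quo{W}=D$, which is case (i); an alternating form has only isotropic vectors, so indecomposability forces the hyperbolic plane, whence $\dim V=2$, $D=F$ is a field, and $\alpha'$ is symplectic, which is case (iii). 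The invertible-or-nilpotent hypothesis is what excludes the remaining possibilities (a noncommutative $D$ with $\dim V=2$, or higher-dimensional anisotropic pieces), and I expect the main obstacle to be exactly this form-theoretic case analysis—showing that orthogonal indecomposability together with the invertible-or-nilpotent condition leaves precisely the one-dimensional and hyperbolic-plane cases—together with the reduction to semisimplicity of $\quo{W}$ in the non-semilocal setting.
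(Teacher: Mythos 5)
Your reduction steps and your semisimple analysis are essentially the paper's own route: the symmetrized lift $\frac{1}{2}(w_0+w_0^\alpha)$, the triviality of $\alpha'$-invariant idempotents in $\quo{W}=W/\Jac(W)$, the orbit analysis of the primitive central idempotents (single orbit of size $1$ or $2$, the size-$2$ case giving $D\times D^\op$ with the exchange), and the dichotomy in the single simple factor between an anisotropic vector (splitting off a line, forcing $\dim_D V=1$) and total isotropy (forcing the hyperbolic plane) is precisely the proof of Theorem~\ref{GEN:TH:gen-of-Osborns-result}, which the paper states implies Osborn's Theorem for semilocal $W$. Bear in mind, though, that for the statement as given the paper offers no proof of its own beyond the citation to \cite[\S4]{Osborn70}; what you have reconstructed is the paper's variant plus the easy reduction, not the full theorem.

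The genuine gap is the one you flag as ``the one external ingredient'': for arbitrary $W$ the quotient $W/\Jac(W)$ is merely semiprimitive, and the conclusion of the theorem includes that it is one of three \emph{artinian} rings. Nothing in your argument establishes this, and it does not follow from triviality of symmetric idempotents alone: the paper points out after Theorem~\ref{GEN:TH:gen-of-Osborns-result} that $S\times S^\op$ with $S$ a simple domain that is not a division ring is a semiprimitive counterexample to the idempotent-only variant. Ruling such rings out requires using the full invertible-or-nilpotent hypothesis on non-idempotent symmetric elements (in $S\times S^\op$ the symmetric element $(s,s^\op)$, $s$ a nonzero nonunit, is neither), and this is exactly the part of \cite{Osborn70} carried out with Jordan-algebra techniques; your proposal never exploits the hypothesis beyond idempotents, so it proves only the semilocal case. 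Two smaller repairs: Proposition~\ref{GEN:PR:classical-cor-over-fields} applies only when the base is a field, so over a division ring $D$ you must instead combine Theorem~\ref{GEN:TH:main-thm-II} with Proposition~\ref{GEN:CR:dimesion-of-K-alpha-cor} to get a regular $\theta_\alpha$-symmetric form with $\dim_D(K_\alpha)_1=1$; and your ``hermitian or alternating'' dichotomy is not available a priori for such general forms --- in the totally isotropic case one must \emph{derive} that $D$ is commutative, as the paper does by showing $\theta=-\id_K$ and $\mul{0}=\mul{1}$, before one can speak of a classical alternating form and invoke the hyperbolic-plane decomposition.
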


    \begin{proof}
        See \cite[\S4]{Osborn70}.
    \end{proof}

    Osborn's result has several generalizations (see papers related to \cite{Osborn70}) and his
    proof is based on Jordan algebras.
    We will prove Osborn's Theorem in the case $W$ is semisimple, but under milder assumptions.
    Our proof actually implies Osborn's Theorem for semilocal rings, for one can easily reduce to the case
    $\Jac(W)=0$; see \cite[\S4]{Osborn70}. Our techniques are very different, though, and will rely on
    Theorem~\ref{GEN:TH:main-thm-II}.

    \begin{thm}\label{GEN:TH:gen-of-Osborns-result}
        Let $(W,\alpha)$ be a  ring with involution such that $W$ is semisimple and the only
        $\alpha$-invariant\footnote{
            An element $w\in W$ is \emph{$\alpha$-invariant} if $w^\alpha=w$. Some texts use \emph{$\alpha$-symmetric} instead of $\alpha$-invariant.
        } idempotents in $W$ are $0$ and $1$. Then one of the following holds:
        \begin{enumerate}
            \item[(i)] $W$ is a division ring.
            \item[(ii)] $W\cong D\times D^\op$ for some division ring $D$ and under that isomorphism
            $\alpha$ exchanges $D$ and $D^\op$.
            \item[(iii)] $W\cong \nMat{F}{2}$ for some field $F$ and under that isomorphism
            $\alpha$ is a symplectic involution.
        \end{enumerate}
    \end{thm}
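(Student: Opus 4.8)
The plan is to read off the structure of $(W,\alpha)$ from the Wedderburn decomposition together with the form–theoretic dictionary of Theorem~\ref{GEN:TH:main-thm-II}, the hypothesis on $\alpha$-invariant idempotents translating into orthogonal \emph{indecomposability} of an associated bilinear space. Writing $W\cong\prod_{i=1}^{k}\nMat{D_i}{n_i}$ by Wedderburn--Artin, let $\epsilon_1,\dots,\epsilon_k$ be the primitive central idempotents. Since $\alpha$ is an anti-automorphism it stabilizes $\Cent(W)$ and permutes the $\epsilon_i$, and as $\alpha^2=\id$ this permutation has order at most $2$. A fixed $\epsilon_i$ is an $\alpha$-invariant idempotent, hence $\epsilon_i=1$ and $k=1$; a transposed pair $\{\epsilon_i,\epsilon_j\}$ gives the $\alpha$-invariant idempotent $\epsilon_i+\epsilon_j=1$, so $k=2$. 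In the transposed case $\alpha$ maps $A:=\nMat{D_1}{n_1}$ anti-isomorphically onto the other factor, whence $W\cong A\times A^{\op}$ with $\alpha$ exchanging the factors; the $\alpha$-invariant idempotents are then exactly the pairs $(a,\sigma(a))$ with $a\in A$ idempotent and $\sigma$ the induced anti-isomorphism, so the hypothesis forces $A$ to have no nontrivial idempotents and hence $A=D$ to be a division ring. This is case~(ii).

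There remains the case $W=\nMat{D}{n}=\End_D(V)$ with $V=D^n$ and $\alpha$ an involution. Here $V$ is a progenerator and $D$ is a division ring, so $N^n\cong D^n$ forces $N\cong D$; thus Theorems~\ref{GEN:TH:main-thm-II} and~\ref{GEN:TH:when-K-alpha-is-standard} apply, and $\alpha$ corresponds to a regular, $\theta$-symmetric form $b$ which, after identifying $K_\alpha$ with the standard double $D$-module of some $(D,\gamma)$ (Example~\ref{FORM:EX:base-example}), is a regular sesquilinear form over $(D,\gamma)$. The crucial observation is the dictionary: for an idempotent $e\in W$ one has $e^\alpha=e$ if and only if $V=eV\oplus(1-e)V$ is $b$-orthogonal, in which case both summands are regular by Proposition~\ref{GEN:PR:orth-sum-regularity}. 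Consequently the hypothesis that $0,1$ are the only $\alpha$-invariant idempotents says precisely that $(V,b)$ admits no orthogonal decomposition into two nonzero regular subspaces.

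I would finish by a dimension count. If some $v$ is anisotropic, i.e.\ $b(v,v)\neq0$, then $\langle v\rangle$ is regular and $V=\langle v\rangle\perp\langle v\rangle^{\perp}$, so indecomposability forces $n=1$ and $W=D$, which is case~(i). Otherwise $b(v,v)=0$ for all $v$, so $b$ is alternating; picking $v_1$ and, using that $b$ is also left regular (Proposition~\ref{FORM:PR:basic-props-of-forms-I}(vii)), some $v_2$ with $a:=b(v_1,v_2)\neq0$, one obtains a regular hyperbolic plane $P=\langle v_1,v_2\rangle$, and $V=P\perp P^{\perp}$ now forces $P^{\perp}=0$, i.e.\ $n=2$. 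Expanding $b(v_1r+v_2s,\,v_1r+v_2s)=0$ for all $r,s\in D$ gives $r^\gamma=ara^{-1}$ for every $r$, and comparing this with the anti-multiplicativity of $\gamma$ yields $rs=sr$; hence $D=F$ is a field and $\gamma=\id_F$, so $b$ is an alternating bilinear form over $F$. By Proposition~\ref{GEN:PR:classical-cor-over-fields}(v) the involution $\alpha$ on $\nMat{F}{2}$ is then symplectic, which is case~(iii).

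The step I expect to be the main obstacle is the middle one: installing the correspondence over a possibly noncommutative division ring $D$ and verifying the idempotent/orthogonality dictionary, followed by the commutativity argument that collapses $D$ to a field in the alternating case. The Wedderburn bookkeeping and the anisotropic split are comparatively routine.
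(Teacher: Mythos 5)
Your proof is correct, and its skeleton coincides with the paper's: reduce via the primitive central idempotents (your invariant-pair description of idempotents in $A\times A^{\op}$ is the paper's $e+e^\alpha$ argument in different clothing), then in the simple case translate the hypothesis through Theorem~\ref{GEN:TH:main-thm-II} into orthogonal indecomposability of $(V,b_\alpha)$ --- the paper proves exactly your dictionary, that an idempotent $e$ satisfies $e^\alpha=e$ iff $V=eV\oplus(1-e)V$ is a $b$-orthogonal splitting --- and finish with the anisotropic/alternating dichotomy. Where you diverge is the middle machinery. The paper never identifies $K_\alpha$ with a standard double module over $D$: it uses only $\dim_D(K_\alpha)_1=1$ (Proposition~\ref{GEN:CR:dimesion-of-K-alpha-cor}) and works with the abstract double module throughout, deriving in the alternating case first $\theta=-\id_K$, then $\mul{0}=\mul{1}$, and from that the commutativity of $D$ via $k\mul{1}(ab)=k\mul{1}(ba)$; the conclusion $\dim_DV=2$ is then delegated to the classical fact that regular alternating forms split into two-dimensional subspaces. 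You instead invoke Theorem~\ref{GEN:TH:when-K-alpha-is-standard} --- legitimate here, since the cancellation hypothesis $N^n\cong D^n\Rightarrow N\cong D$ is trivial over a division ring --- to realize $b$ as a sesquilinear form over $(D,\gamma)$, build a hyperbolic plane explicitly and split off its orthogonal complement to get $n=2$ without quoting the alternating-form classification, and extract commutativity from the asymmetry identity $r^\gamma=ara^{-1}$ compared against the anti-multiplicativity of $\gamma$. Your route buys a more self-contained endgame at the cost of the heavier Theorem~\ref{GEN:TH:when-K-alpha-is-standard}; the paper's route needs less input but outsources the final dimension count. One cosmetic remark: citing Proposition~\ref{GEN:PR:classical-cor-over-fields}(v) for symplecticity is unnecessary (and slightly delicate in characteristic $2$, where the cases listed there need not be exclusive); since the theorem defines symplectic as ``induced by a classical alternating bilinear form,'' your alternating $b$ witnesses case~(iii) directly.
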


    \begin{proof}
        We may assume $W$ is not the zero ring.
        Let $\{e_1,\dots,e_n\}$ be the primitive idempotents of $\Cent(W)$.
        Then $\alpha$ permutes $e_1,\dots,e_n$. Assume $n>1$. Then we have $e_i\neq e_i^\alpha$
        for all $i$. This implies $e_1+e_1^\alpha$ is a nonzero $\alpha$-invariant idempotent, hence
        $e_1+e_1^\alpha=1$. Thus, $n=2$ and $e_1^*=e_2$. Write $W_i=e_iW$. Then $W\cong W_1\times W_2$
        and $\alpha$ exchanges $W_1$ and $W_2$. If $0\neq e\in W_1$ is an idempotent, then $e^\alpha\in W_2$,
        hence $e+e^\alpha$ is a non-zero $\alpha$-invariant idempotent, hence $e+e^\alpha=1$ and $e=1_{W_1}$. This means
        $W_1$ is a simple artinian ring with no non-trivial idempotents, hence it is a division ring. As
        $W_2\cong W_1^\op$ via $\alpha$, (ii) holds.

        Now assume $n=1$. Then $W$ is simple artinian and we can write $W=\End_D(V)$ for some division
        ring $D$ and a f.d.\ right $D$-vector-space $V$. Let $b=b_\alpha$, $K=K_\alpha$ and $\theta=\theta_\alpha$.
        Then $b:V\times V\to K$ is a regular $\theta$-symmetric bilinear form by Theorem~\ref{GEN:TH:main-thm-II}.
        Moreover, by Proposition~\ref{GEN:CR:dimesion-of-K-alpha-cor}, $\dim_D K_1=1$.

        We claim that if $V=U_1\oplus U_2$ with $b(U_1,U_2)=b(U_2,U_1)=0$, then $U_1=0$ or $U_2=0$.
        Indeed, let $e\in\End_D(V)$ be the projection onto
        $U_1$ with kernel $U_2$. It is straightforward to check that $b(ex,y)=b(ex,ey)=b(x,ey)$,
        hence $e^\alpha=e$. Therefore, $e=1$  or $e=0$, so $U_1=V$ or $U_1=0$.

        Assume there is $x\in V$ such that $b(x,x)\neq 0$ and
        define
        \[L=x^\perp=\{y\in V\where b(x,y)=0\}\ .\]
        We claim that $V=L\oplus xD$. Clearly $x\notin L$, hence $xD\cap L=0$.
        On the other hand, for all $v\in V$, there exists
        $d\in D$ such that $b(x,x)\mul{1}d=b(x,v)$ (because $\dim_D K_1=1$),
        hence $b(x,v-xd)=b(x,v)-b(x,x)\mul{1}d=0$ and this implies $v=xd+(v-xd)\in xD+L$.
        Now,  since $b$ is $\theta$-symmetric $b(L,xD)=b(xD,L)^\theta=0$, so by the previous paragraph,
        $L=0$.
        But this means $\dim_D V=1$, so $W\cong D$ and (i) holds.

        We may now assume that $b(x,x)=0$ for all $x\in V$.
        Then  $0=b(x+y,x+y)=b(x,y)+b(y,x)=b(x,y)+b(x,y)^\theta$ for all $x,y\in V$,
        hence $\theta=-\id_K$.
        Furthermore, for all $x,y\in V$ and
        $a\in D$ we have $b(x,y)\mul{0}a=b(xa,y)=-b(y,xa)=-b(y,x)\mul{1}a=b(x,y)\mul{1}a$, hence $\mul{0}=\mul{1}$.
        This implies that for any $0\neq k\in K$ and $a,b\in D$, we have $k\mul{1}(ab)=(k\mul{1}a)\mul{1}b=
        (k\mul{0}a)\mul{1}b=(k\mul{1}b)\mul{0}a=(k\mul{1}b)\mul{1}a=k\mul{1}(ba)$, hence $ab=ba$.
        Therefore, $D$ is a field and $K$ is isomorphic as a double $D$-module to
        the standard double module of $(D,\id_D)$. As $b(x,x)=0$ for all $x\in V$,
        $b$ is a classical alternating bilinear form.
        We are thus finished if we prove that $\dim_DV=2$ (as this would
        imply $W\cong\nMat{D}{2}$, as in (iii)). However, this follows from the well-known fact
        that every regular alternating form over a field is the orthogonal sum of $2$-dimensional alternating forms
        (and $b$ cannot be the orthogonal sum of two non-trivial forms, as argued above).
    \end{proof}

    Theorem~\ref{GEN:TH:gen-of-Osborns-result} is false for rings $W$ with $\Jac(W)=0$ (and hence
    it does not imply Osborn's Theorem for general rings). For example, for any simple \emph{domain} $S$ which
    is not a division ring (e.g.\ a Weyl algebra),
    the ring $W=S\times S^\op$ has an involution and satisfies $\Jac(W)=0$,
    but it fails to satisfy any of the conditions (i)--(iii) of Theorem~\ref{GEN:TH:gen-of-Osborns-result}.

\section{Counterexamples}
\label{section:GEN:examples}

    This last section presents counterexamples.
    We begin with  demonstrating that $b_\alpha$ can be degenerate even
    when the base ring is a finite dimensional algebra over a field.

    \begin{example}
        Let $F$ be a field and let $R$
        be the commutative subring of $\nMat{F}{3}$ consisting of matrices of the form:
        \[\SMatIII{a}{}{}{b}{a}{}{c}{}{a}\ .\]
        Let $x=e_{21}$ and $y=e_{31}$ (where $\{e_{ij}\}$ are
        the standard matrix units of $\nMat{F}{3}$). Then
        $\{1,x,y\}$ is an $F$-basis of $R$. Consider the elements of $M=F^3$ as \emph{row vectors}
        and let $\{e_1,e_2,e_3\}$ be the standard $F$-basis of $M$. Then $M$ is naturally
        a right $R$-module (the action of $R$ being matrix multiplication on the right) and a straightforward
        computation shows that  $\End(M_R)\cong R$, i.e.\ all $R$-linear maps $f:M\to M$
        are of the form $m\mapsto mr$ for some $r\in R$. Let $\alpha=\id_{R}\in\aAut(R)$.
        Then  $M\otimes_\alpha M$ is just $M\otimes_R M$. We now have:
        \begin{eqnarray*}
            b_\alpha(e_1,e_1) &=& e_1\otimes e_1 = e_2x \otimes e_1=e_2x\otimes  e_1=0\ ,\\
            b_\alpha(e_2,e_1) &=& e_2\otimes e_1 = e_2 \otimes  e_3y=  e_2y\otimes e_3=0\ , \\
            b_\alpha(e_3,e_1) &=& e_3\otimes e_1 = e_3 \otimes  e_2x =  e_3x\otimes e_2 =0\ .
        \end{eqnarray*}
        Therefore, $b_\alpha(M,e_1)=0$, hence $b_\alpha$ is not right injective.
        In particular, the correspondence \eqref{GEN:EQ:correspondence-adjusted-II} of Remark~\ref{GEN:RM:dfn-of-generic-forms} fails for $M$.
    \end{example}

    The next example demonstrates that $b_\alpha$ can be injective even when it is not regular.

\rem{
    \begin{example}\label{GEN:EX:flat-ideals}
        Let $R$ be a commutative ring admitting a \emph{proper}  ideal $A\idealof R$ with the following properties:
        \begin{enumerate}
            \item[(a)] $A_R$ is flat and $\ann(A)=0$.
            \item[(b)] $A^2=A$.
            \item[(c)] $\End(A_R)\cong R$, i.e.\ all $R$-linear maps $f:A\to A$
            are of the form $a\mapsto ar$ for some $r\in R$.
        \end{enumerate}
        Let $\alpha=\id_R\in \aAut(\End(A_R))$. Then we can identify $A\otimes_\alpha A$ with $A\otimes_R A$. Since $A_R$ is flat,
        the latter is isomorphic to  $A^2=A$ via $x\otimes y\mapsto xy$ (see \cite[\S4A]{La99}).
        This is clearly a double $R$-module isomorphism, where the actions $\mul{0}$ and $\mul{1}$
        on $A$ are the standard action of $R$ on $A$.
        Thus, $b_\alpha$ is similar to $b:A\times A\to A$ defined by $b(x,y)=xy$.
        Moreover, $b(A,x)=0$ implies $x\in\ann A=0$, hence $b$ is right injective, thus
        right stable.
        However, $b$ is not right regular
        since $\id_A\neq\rAd{b}(a)$ for all $a\in A$. (Indeed, if $\id_A=\rAd{b}(a)$, then
        $1-a\in\ann(A)=0$, hence $1=a\in A$ which contradicts our assumptions.)
        Similarly, $b$ is left stable but not right regular.

        It is left to provide an explicit example of $R$ and $A$. Let $F$ be a field. Then any of the following
        satisfies (a), (b) and (c):
        \begin{enumerate}
            \item[(1)] $R=F[x^q\where 0<q\in\Q]$ and $A=\ideal{x^q\where 0<q\in \Q}$.
            \item[(2)] $R=\prod_{\aleph_0} F$ and $A=\bigoplus_{\aleph_0} F$.
        \end{enumerate}
        In (1), any ideal of $R$ is flat since $R$ is a Pr\"{u}fer domain, and in (2),
        any ideal of $R$ is flat since $R$ is von-Neumann regular; see \cite[\S4B]{La99}.
        The rest of the details are left to the reader.
    \end{example}
}

    \begin{example}\label{GEN:EX:injective-forms-II}
        Let $F$ be a field, let $R=F[s,t]$ and let $M=\ideal{s,t}:=sR+tR$.
        It is routine to verify that $\End_R(M)\cong R$ (in the sense of the previous example). Let $\alpha=\id_R\in\aAut(\End_R(M))$. We claim that
        $b_\alpha$ is injective, but not regular. Indeed, we can
        identify $K_\alpha$ with $M\otimes_R M$ as above.
        Using this isomorphism,
        it is not hard (but tedious)
        to verify that the set
        \[\{s\otimes_\alpha s^i,~s\otimes_\alpha t,~t\otimes_\alpha s,~t\otimes_\alpha t^i,~s^j\otimes_\alpha t^k\where i,j,k\in \N,j+k>2\}\]
        forms an $F$-basis to $K_\alpha$.
        Let $\vphi\in M^{[1]}=\Hom_R(M,(K_\alpha)_0)$ be defined by $\vphi(x)=t\otimes_\alpha x$ (in fact $\vphi=\lAd{b_\alpha} t$).
        Then for all $x\in M$, we have $(\rAd{b_\alpha}x)s=s\otimes_\alpha x\neq t\otimes_\alpha s=\vphi(s)$, hence
        $\rAd{b_\alpha}x\neq \vphi$. It follows that $\rAd{b_\alpha}$ is not onto, hence $b_\alpha$ is not right regular.

        Now consider the classical form $b:M\times M\to R$ over $(R,\id_R)$ given
        by $b(x,y)=xy$. It is easy to check that $b$ is nondegenerate and $b(rx,y)=b(x,ry)$
        for all $r\in R$, hence there is a double $R$-module
        homomorphism $f:K_\alpha\to K$ such that $f\circ b_\alpha=b$. If there is $x\in M$
        such that $b_\alpha (M,x)=0$, then $b(M,x)=f(b_\alpha(M,x))=f(0)=0$, implying $x=0$.
        Thus, $b_\alpha$ is right injective. That $b_\alpha$ is left injective and not left regular follows by symmetry.
    \end{example}

    \begin{remark}
        Assume $b_\alpha$ is right injective but not right regular (e.g.\ as in Example~\ref{GEN:EX:injective-forms-II}).
        Then the correspondence
        in \eqref{GEN:EQ:correspondence-adjusted-II} fails. However, it is still possible to recover $\alpha$
        from $b_\alpha$ in this case, because for all $w\in W$, there exists a \emph{unique}  $w^\alpha\in W$ such that
        $b_\alpha(wx,y)=b_\alpha(x,w^\alpha y)$ for all $x,y\in M$. This suggests the following definition: Call a bilinear
        form $b:M\times M\to K$ \emph{right stable} if for all $w\in W:=\End_R(M)$, there exists a \emph{unique}
        $w^\alpha\in W$ such that $b(wx,y)=b(x,w^\alpha y)$ for all $x,y\in M$. The map $w\mapsto w^\alpha$
        is then a well-defined anti-endomorphism of $W$. We can now ask whether there is a correspondence
        between \emph{stable forms on $M$}, up to some similarity, and elements of $\aAut(W)$. It turns out that the answer is ``yes''
        in many cases, e.g.\ in the setting of  Example~\ref{GEN:EX:injective-forms-II}. Moreover, it is possible
        that $b_\alpha$ would be \emph{degenerate and stable}. The details of these results will be published elsewhere.
    \end{remark}

    The next two examples demonstrate what might happen when $M$ is a generator,
    but $\alpha\in\aEnd(\End(M))$ is not bijective. They imply that there need  not be a
    one-to-one correspondence between the anti-\emph{endo}morphisms of $\End(M)$ and the \emph{right regular}
    forms on $M$, despite the fact the the anti-\emph{auto}morphisms of $\End(M)$ correspond to \emph{regular}
    forms in this case (Theorem~\ref{GEN:TH:main-thm-II}). In addition,
    they imply that the injectivity of $\alpha$
    in Corollary \ref{GEN:CR:generic-forms-over-generators} is essential.

    \begin{example}\label{GEN:EX:example-IV}
        Let $N$ be any nonzero torsion $\Z$-module and let $M=\Z\oplus N\in\rMod{\Z}$. We consider the elements of $M$ as column vectors. Then
        \[W:=\End_{\Z}(M)=\SMatII{\End(\Z_\Z)}{\Hom(N,\Z)}{\Hom(\Z,N)}{\End(N_\Z)}=\SMatII{\Z}{0}{N}{\End(N_\Z)}\ .\]
        Note that $M$ is a generator and $e:=\smallSMatII{1}{0}{0}{0}\in W$ is a projection from
        $M$ onto $\Z_{\Z}$. Thus, we can identify $M$ with
        \[We=\SMatII{\Z}{0}{N}{0}\ .\]
        Define $\alpha\in \aEnd(W)$ by
        $\smallSMatII{a}{0}{b}{c}^\alpha=\smallSMatII{a}{0}{0}{\quo{a}}$ where $\quo{a}$ is the image
        of $a\in\Z$ in $\End(N_\Z)$.
        Then by Proposition \ref{GEN:PR:K-alpha-computation},
        $b_\alpha$ is similar to $b:M\times M\to e^\alpha We=1We=M$
        defined by $b(\smallSMatII{x}{0}{y}{0},\smallSMatII{z}{0}{w}{0}))=
        \smallSMatII{x}{0}{y}{0}^\alpha\smallSMatII{z}{0}{w}{0}=\smallSMatII{x}{0}{0}{\quo{x}}\smallSMatII{z}{0}{w}{0}=\smallSMatII{xz}{0}{xw}{0}$.
        It is now easy to see that $b$ is right injective but not left injective.
        In addition, $b$ is not right regular.
        To see this, let $0\neq f\in\End(N_{\Z})$ and note that the homomorphism
        $\smallSMatII{x}{0}{y}{0}\mapsto \smallSMatII{0}{0}{f(y)}{0}\in\Hom_{\Z}(M,(K_\alpha)_0)$ does
        not lie in  $\im(\rAd{b})$.
    \end{example}

    \begin{example}\label{GEN:EX:example-V}
        View $N:=\Z[\frac{1}{p}]/\Z$ as a $\Z_p$-module as in Example \ref{GEN:EX:counter-example-I}.
        Then $\End(N_{\Z_p})=\Z_p$. Define $M=\Z_p\oplus N\in\rMod{\Z_p}$ and consider the elements
        of $M$ as column vectors.
        Then
        \[W:=\End_{{\Z}_p}(M)=\SMatII{\End(\Z_p)}{\Hom(N,\Z_p)}{\Hom(\Z_p,N)}{\End(N)}=\SMatII{\Z_p}{0}{N}{\Z_p}\ .\]
        Let $e=\smallSMatII{1}{0}{0}{0}\in W$. As in the previous example, we can identify $M$ with $We$.
        Define $\alpha\in\aEnd(W)$ by $\smallSMatII{x}{0}{y}{z}=\smallSMatII{z}{0}{0}{z}$.
        Then by Proposition \ref{GEN:PR:K-alpha-computation},
        $K_\alpha$ is isomorphic to $e^\alpha We=0We=0$,
        so $b_\alpha$ is the zero form!\rem{(In particular, the assumption that $\alpha$ is injective in Corollary
        \ref{GEN:CR:generic-forms-over-generators} is essential).}
    \end{example}

    Our last example demonstrates that a regular bilinear form $b$ need not be
    similar to its generization, $b_{\alpha(b)}$. The construction
    also shows several other interesting phenomena, which are pointed out at the end.

    \begin{example}\label{GEN:EX:b-is-not-similar-to-b-alpha}
        Let $1<n\in\N$ and let $F$ be a field. Denote by $R$ the ring of upper-triangular matrices
        over $F$. For $0\leq i\leq n$, let $M_i$ denote the right $R$-module consisting
        of row vectors of the form
        \[(\,0,\dots,0,\underbrace{*,\dots,*}_i\,)\in F^n\]
        with $R$ acting by matrix multiplication on the right.
        It is not hard to verify that $\dim_F\Hom_{R}(M_n,M_n/M_{i})=1$
        for all $0\leq i<n$. In addition, $\End_{R}(M_n/M_i)=F$ for
        all $0\leq i<n$. The latter implies the following fact, to be used later,
        which follows from the Krull-Schmidt Theorem (see
        \cite[Th.\ 2.9.17]{Ro88}): Assume that $a_0+\dots +a_n=b_0+\dots +b_n$ and
        $\bigoplus_{i=0}^n (M_n/M_i)^{a_i}\cong\bigoplus_{i=0}^n (M_n/M_i)^{b_i}$. Then
        $a_i=b_i$ for all $0\leq i\leq n$. (The assumption $a_0+\dots+ a_n=b_0+\dots +b_n$
        is needed because $M_n/M_n$ is the zero module.)

        Let $T$ be the transpose involution on $K:=\nMat{F}{n}$. We make $K$
        into  a double $R$-module by defining
        \[A\mul{0} B=\trans{B}A\qquad\textrm{and}\qquad A\mul{1} B=AB\]
        for all $A\in K$ and $B\in R$. Then $b:M_n\times M_n\to K$
        defined by $b(x,y)=\trans{x}y$ is a bilinear form. For $0\leq u,v\leq n$,
        let $K_{u,v}$ denote the matrices $A=(A_{ij})\in K$ for which
        $A_{ij}=0$ if $i\leq u$ or $j\leq v$. For example,
        when $n=3$, $K_{1,2}$ and $K_{2,0}$ consist of matrices of
        the forms
        \[\SMatIII{0}{0}{0}{0}{0}{*}{0}{0}{*}\quad,\quad \SMatIII{0}{0}{0}{0}{0}{0}{*}{*}{*}\]
        respectively.
        Then $K_{u,v}$ is a sub-double-$R$-module
        of $K$, hence $K/K_{u,v}$ is a double $R$-module, and $b_{u,v}:M_n\times  M_n\to K/K_{u,v}$
        defined by $b(x,y)=\trans{x}y+K_{u,v}$ is a bilinear form.

        Recall that we use $K_i$ to denote the $R$-module obtained from $K$ by letting $R$ act via $\mul{i}$.
        Then $(K/K_{u,v})_0\cong M_n^v\oplus (M_n/M_{n-u})^{n-v}$ (the summands
        are the columns
        of $K/K_{u,v}=\nMat{F}{n}/K_{u,v}$) and $(K/K_{u,v})_1\cong M_n^u\oplus (M_n/M_{n-v})^{n-u}$ (the summands
        are the rows
        of $K/K_{u,v}=\nMat{F}{n}/K_{u,v}$). Therefore, by the fact recorded above, for $0\leq u,u',v,v'<n$ and $i\in\{0,1\}$,
        we have
        \begin{eqnarray*}
        (K/K_{u,v})_i\cong (K/K_{u',v'})_i\phantom{{}_{-1}}&\iff& (u,v)=(u',v') \qquad \textrm{and} \\
        (K/K_{u,v})_i\cong (K/K_{u',v'})_{1-i}&\iff&
        (u,v)=(v',u')\ .
        \end{eqnarray*}

        We now claim that $b_{u,v}$ is right regular when $u>0$.
        Indeed,
        it is easy to check that $b_{u,v}$ is right injective in this case. In addition,
        we have
        \begin{eqnarray*}
        \dim_F M^{[1]}&=&
        \dim_F\Hom_{R}(M_n,(K/K_{u,v})_0) \\
        &=&v\dim_F\Hom_{R}(M_n,M_n)+(n-v)\dim_F\Hom_{R}(M_n,M_n/M_{n-u})=n\ .
        \end{eqnarray*}
        Therefore, dimension considerations imply $\rAd{b_{u,v}}$ is bijective, i.e.\ $b_{u,v}$ is right regular.
        Similarly, $b_{u,v}$ is left regular when $v>0$.

        Now, observe that $\End_{R}(M_n)\cong F$ and $b_{u,v}(ax,y)=b_{u,v}(x,ay)$ for all $x,y\in M_n$ and $a\in F$. Therefore, provided
        $b_{u,v}$ is right regular,
        its corresponding anti-endomorphism is $\id_F$.
        It follows that the forms
        $\{b_{u,v}\where 0<u\}$ are right regular and have the same corresponding
        anti-automorphism (which is in fact an involution), and hence the
        same generization, which is regular since $M_n$ is f.g.\ projective (Corollary~\ref{GEN:CR:main-thm-I}).
        In addition, the double $R$-modules $\{K/K_{u,v}\where u,v<n\}$
        are pairwise non-isomorphic, hence the forms
        $\{b_{u,v}\where u,v<n\}$ are pairwise non-similar.
        This means that
        the forms $\{b_{u,v}\where 0<u,v<n\}$ are regular, pairwise non-similar, but
        nevertheless have the same generization, which is regular. In particular, necessarily all
        but possibly one of them is not similar to its generization.
        Let us show that this is in fact true for all of these forms.

        We claim that common generization of $\{b_{u,v}\where 0<u\}$ is similar to $b$.
        Indeed, let $\alpha=\id_F\in\aEnd(F)$. Then $\dim_F K_\alpha=\dim_F M_n\otimes_\alpha M_n=\dim_F
        M_n\otimes_F M_n=n^2$. The universality of $b_\alpha$ implies that there is a double
        $R$-module homomorphism $f:K_\alpha\to K$, that must be onto since $\im(b)=K$.
        (Recall that $\im(b)$ was defined to be the additive group spanned by $\{b(x,y)\where x,y\in M_n\}$.)
        As $f$ is clearly $F$-linear and $\dim_F K=n^2$, dimension considerations imply that $f$ is an
        isomorphism. Thus, $b$ is the generization of all the forms $\{b_{u,v}\where 0<u\}$, and since
        $\dim_FK>\dim_F K/K_{u,v}$ whenever $u,v<n$, we see that $b_{u,v}\nsim b$ for all $0<u,v<n$, as required.

        Note that since $M_n$ is f.g.\ projective, there is a one-to-one correspondence between
        $\aEnd(\End_R(M_n))$ and the \emph{right generic} forms on $M_n$, up to similarity (Remark~\ref{GEN:RM:dfn-of-generic-forms}
        and Corollary~\ref{GEN:CR:main-thm-I}). However, we have just shown that there is no
        correspondence between $\aEnd(\End_R(M_n))$ the \emph{right regular} forms on $M_n$ (since
        the maps $\alpha\mapsto b_\alpha$ and $b\mapsto \alpha(b)$ of section~\ref{section:GEN:preface} are
        not  inverse to each other).

        Finally, let $0<u,v<n$ be distinct. Then $b_{u,v}$ is regular, $\alpha(b_{u,v})$ is an involution, but
        $\im(b)=K/K_{u,v}$ does not have an anti-automorphism since $(K/K_{u,v})_0\ncong (K/K_{u,v})_1$.
        Furthermore, $b_{u,0}$ is right regular, but left
        degenerate. This shows that
        Proposition~\ref{GEN:PR:generic-forms-basic-props} fails for non-generic forms.
\rem{
            Let $0<u,u',v,v'<n$ be such that $(u,v)\neq (u',v')$ and $uv=u'v'$. Then
            any double $R$-module homomorphism from $K/K_{u,v}$ to $K/K_{u',v'}$ is neither injective nor surjective,
            because $\dim_FK/K_{u,v}\cong \dim_F K/K_{u',v'}$ and $K/K_{u,v}\ncong K/K_{u',v'}$.
            This shows that the problem of defining \emph{weak similarities}, posed in Remark \ref{GEN:RM:weak-similarity},
            is far from trivial. In particular, one cannot expect weak similarities to merely consist of morphisms between
            double modules.
}
\rem{
        Let $I=\{1,\dots,n-1\}\times\{0,\dots,n-1\}$. We  now see that the forms
        $\{b_{u,v}\}_{(u,v)\in I}$ are right regular (and even left regular if $v>0$), but not
        right generic, i.e.\ they are not similar to their generization, $b$.
        Indeed, $\dim_F K_{u,v}<\dim_F K$ for all $(u,v)\in I$ and hence, there cannot
        be a similarity between $b$ and $b_{u,v}$. In fact, the forms
        $\{b_{u,v}\}_{(u,v)\in I}$
        are also pairwise non-similar, for if $b_{u,v}\sim b_{u',v'}$
        for some $(u,v),(u',v')\in  I$, then $(K/K_{u,v})_0\cong (K/K_{u',v'})_0$.
        This implies
        \[M_n^v\oplus (M_n/M_{n-u})^{n-v} \cong (K/K_{u,v})_0  \cong (K/K_{u',v'})_0 \cong M_n^{v'}\oplus (M_n/M_{n-u'})^{n-v'}\ ,\]
        so we must have $(u,v)=(u',v')$
        by the fact stated in the first paragraph.

        We also point out that if $(u,v),(u',v')\in  \{0,\dots,n-1\}^2$
        are distinct and
        satisfy $uv=u'v'$, then any double $R$-module homomorphism from $K/K_{u,v}$ to $K/K_{u',v'}$
        is not injective nor surjective. For otherwise, it would have to be bijective because
        $\dim_F K/K_{u,v}=\dim_F K/K_{u',v'}$.
        This shows that the problem of defining \emph{weak similarities}, posed in Remark \ref{GEN:RM:weak-similarity},
        is far from trivial. In particular, one cannot expect weak similarities to merely consist of morphisms between
        double modules. In addition, we also note that $K/K_{u,v}$ does not have an anti-isomorphism
        when $u\neq v$ and $u,v<n$, despite the fact that $\alpha(b_{u,v})=\id_F$ is an involution when $u>0$. This is true because
        \[(K/K_{u,v})_1\cong M_n^u\oplus (M_n/M_{n-v})^{n-u} \ncong  M_n^v\oplus (M_n/M_{n-u})^{n-v}\cong (K/K_{u,v})_0\]
        when $u\neq v$ and $u,v< n$,
        and an anti-isomorphism of $K/K_{u,v}$  induces an isomorphism $(K/K_{u,v})_1\cong (K/K_{u,v})_0$. Moreover,
        in the case $(u,v)=(1,0)$, $b_{u,v}$ is  left degenerate, and in particular, not left regular. (Compare this behavior with
        Proposition \ref{GEN:PR:generic-forms-basic-props}(iii).)
}
\rem{
        To finish, observe that the form $b_{1,0}$
        is left degenerate and can thus be classified
        as ``badly behaved''. However, we have seen that its generization, $b=b_{n,n}$, is
        regular, which can be considered as ``well behaved''. This
        demonstrates how generization can make badly behaved forms
        into well behaved forms.
}
    \end{example}

    We could neither find nor contradict the existence of the following:
    \begin{itemize}
        \item An anti-\emph{auto}morphism $\alpha$ such that $b_\alpha$
        is right regular but not left regular. (In this case $\alpha^2$ cannot be inner
        by Corollary \ref{GEN:CR:right-regular-iff-left-regular}.)
        \item A generator $M$ and an \emph{injective} $\alpha\in\aEnd(\End_R(M))$ such that
        $b_\alpha$ is not \emph{right} injective.
    \end{itemize}

\rem{
\section{Rings That Are Morita Equivalent to Their Opposites}
\label{section:FORM:applications}

    Recall that two rings $R$, $S$ are said to be \emph{Morita equivalent} if the module
    categories $\rMod{R}$ and $\rMod{S}$ are equivalent. By Morita's Theorems (see \cite[\S18]{La99}, \cite[\S4.1]{Ro88} and also
    \cite[Ch.\ 4]{MaximalOrders}), this is equivalent to the existence of a right $R$-progenerator
    (i.e.\ a f.g.\ projective $R$-generator) such that $S\cong \End(P_R)$. Such progenerators are called
    \emph{$(S,R)$-progenerators}. Moreover, there is a one-to-one correspondence between $(S,R)$-progenerators
    (considered up to isomorphism of $(S,R)$-bimodules) and equivalences between $\rMod{R}$ and $\rMod{S}$ (considered
    up to \emph{isomorphism of equivalences}). We record for later that any $(S,R)$-progenerator $P$ induces an isomorphism
    $\sigma_P:\Cent(R)\to \Cent(S)$ given by $\sigma_P(r)=s$ where $s$ is the unique element of $\Cent(S)$
    satisfying $sp=pr$ for all $p\in P$. As $\sigma_P$ only depends on the isomorphism class of $P$, it follows
    that any equivalence of categories from $\rMod{R}$ to $\rMod{S}$ induces an isomorphism $\Cent(R)\to \Cent(S)$.
    If we consider $S$ as a $\Cent(R)$-algebra via this isomorphism, then the equivalence between $\rMod{R}$
    and $\rMod{S}$ becomes an equivalence of \emph{$\Cent(R)$-categories}.\footnote{
        Recall that a $C$-category (with $C$ a commutative ring) is an additive category
        whose $\Hom$-sets  are endowed with a (right) $C$-module structure such that composition is $C$-bilinear.
    }

    In the special case $S=R^\op$, we may consider the isomorphism $\Cent(R)\to \Cent(S)=\Cent(R^\op)$ as an
    automorphism of $\Cent(R)$. This automorphism is called the \emph{type} of the Morita equivalence between
    $R$ and $R^\op$ or the $(R^\op,R)$-progenerator. It will also be beneficial to define the \emph{type} of an anti-automorphism of $R$ to
    be its restriction to $\Cent(R)$. (For example, with this notation, anti-automorphisms of $R$ that fix
    $\Cent(R)$ are of type $\id_{\Cent(R)}$.) Any anti-automorphism $\alpha$ of $R$ gives rise to an $(R^\op,R)$-progenerator of
    the same type; simply extend the right $R$-module structure of $R$ to an $(R^\op,R)$-bimodule by letting $r^\op\cdot a=r^\alpha a$
    for all $r,a\in R$.

    \medskip

    In this section, we use general bilinear forms to partially answer a
    question that
    was suggested to the author by David Saltman (to whom the author is grateful):
    Let $R$ be a ring. Under what assumptions  all or some of the following conditions are equivalent:
    \begin{enumerate}
        \item[(1)] $R$ Morita equivalent to a ring with involution,
        \item[(2)] $R$ is Morita equivalent to a ring with an anti-endomorphism,
        \item[(3)] $R$ is Morita equivalent to $R^{\op}$.
    \end{enumerate}
    Obviously, (1)$\derives$(2)$\derives$(3), so one is interested in showing (2)$\derives$(1) or (3)$\derives$(2).

    By the first part of the following theorem, which is due to Saltman (\cite{Sa78}),
    this problem is related with order-$2$ elements in the Brauer group of a commutative
    ring. (For definition of Azumaya algebras, the Brauer group, the corestriction map etc., see \cite{Sa99}.)

    \begin{thm}[Saltman]\label{FORM:TH:order-in-the-Brauer-grp}
        Let $C$ be a commutative ring, let $\Br(C)$ be the Brauer group of $C$ and let
        $\sim_{\Br}$ denote the Brauer equivalence relation. Let $R$ be an Azumaya algebra over $C$.
        \begin{enumerate}
            \item[(i)]
            There is an Azumaya
            $C$-algebra $R'\sim_{\Br} R$ admitting a $C$-involution $\iff$ $R\sim_{\Br} R^\op$ (i.e.\ $R$ has order $2$ or $1$ in $\Br(C)$)
            $\iff$ $R$ is Morita equivalent to $R^\op$ and the equivalence is of type $\id_C$.
            \item[(ii)] Let $C/C_0$ be a Galois extension of rings with Galois group $\{1,\sigma\}$.
            There is an Azumaya $C$-algebra $R'\sim_{\Br}R$ admitting an involution of type $\sigma$ $\iff$
            $\Cor_{C/C_0}(R)\sim C_0$.
        \end{enumerate}
    \end{thm}

    \begin{proof}
        (i) The first equivalence is
        \cite[Th.\ 3.1(a)]{Sa78}. As for the
        second equivalence, by \cite[Cr.\ 17.2]{Ba64}, $R\sim_{\Br} R^\op$ $\iff$ $\rMod{R}$ is equivalent to $\rMod{R^\op}$
        as \emph{$C$-categories}, which is equivalent to $R$ being Morita equivalent to $R^\op$
        with $\id_C$ as the induced isomorphism. (The assumption that $\Spec(C)$ is noetherian in \cite{Ba64} can be ignored
        by \cite[Th.\ 2.3]{Sa99}; see also \cite[\S1]{Sa78}.)

        (ii) This is
        \cite[Th.\ 3.1(b)]{Sa78}.
    \end{proof}

    \begin{remark}
        In case $C$ is a field, we can take $R'=R$ in Theorem \ref{FORM:TH:order-in-the-Brauer-grp}.
        This result (which preceded \cite[Th.\ 3.1]{Sa78}) is due to Albert; e.g.\ see \cite[Ths.\ 10.19, 10.22]{Al61StructureOfAlgs}.
    \end{remark}

    In this section, we use general bilinear forms to prove that (3)$\derives$(2) for various families of rings.
    In addition, we shall prove that (3)$\derives$(1) under some assumptions on the Morita equivalence
    and the ring in question. In particular, we will present a new proof of Theorem~\ref{FORM:TH:order-in-the-Brauer-grp}
    which miraculously avoids the module-theoretic technicalities of \cite{Sa78}.
    However, once dropping our assumptions, (2)$\nderives$(1) even when $R$ is a f.d.\ division algebra (the anti-automorphism
    cannot fix the center in this case); see Examples~\ref{FORM:EX:div-ring-without-involution} and~\ref{FORM:EX:two-nderives-one}.

    \medskip

    Our results rely on the following proposition, which roughly means that
    all anti-automorphisms (resp.\ involutions) of rings that are Morita equivalent to
    $R$ are explained by regular (resp.\ regular and symmetric) general bilinear forms
    defined over $R$-progenerators (compare with \cite[Th.\ 4.2]{Sa78}). Before we formulate it, note that every
    $(R^\op,R)$-progenerator $P$ can be made into double $R$-module by letting
    $p\mul{0} r = pr$ and $p\mul{1} r=r^\op p$. Such
    double $R$-modules will be called \emph{double $R$-progenerators}. When
    there is no risk of confusion, we will treat double $R$-progenerators as
    $(R^\op,R)$-bimodules as well.
    In particular, the \emph{type} of a double $R$-progenerator $K$ is the type of
    its corresponding $(R^\op,R)$-progenerator, namely the automorphism $\sigma\in\Aut(\Cent(R))$
    satisfying $k\mul{0}c=k\mul{1} c^\sigma$ for all $k\in K$, $c\in\Cent(R)$.

    \begin{prp}\label{FORM:PR:morita-context}
        Let $R$ be a ring, let $M$ be a right $R$-progenerator
        and let $S=\End_R(M)$. Identify
        $\Cent(R)$ with $\Cent(S)$ via $c\mapsto [m\mapsto mc]\in S$. Then:
        \begin{enumerate}
            \item[(i)] $S$ has an anti-auto\-mor\-phism
            of type $\sigma$ ($\sigma\in\Aut(\Cent(R))$) $\iff$ there exists a right regular bilinear space $(M,b,K)$
            such that $K$ is a double $R$-progenerator of type $\sigma$. In this case,
            $b$ is also left regular.
            \item[(ii)] $S$ has  an anti-automorphism of type $\sigma$ whose
            square is inner  $\iff$ in (i), $K$ can be chosen to have an  anti-automorphism $\theta$.
            \item[(iii)] $S$ has an involution of type $\sigma$ $\iff$ in (i), $K$ can be chosen to have an involution
            $\theta$ and $b$ can be chosen to be $\theta$-symmetric.
        \end{enumerate}
    \end{prp}

    \begin{proof}
        (i) Let $\alpha$ be an anti-automorphism of $S$ of type $\sigma$.
        Then by Theorem~\ref{GEN:TH:main-thm-I} (and its left version), there is a regular
        bilinear form $b_\alpha:M\times M\to K_\alpha$ whose corresponding anti-endomorphism
        is $\alpha$.
        We claim that $K_\alpha$ is a double $R$-progenerator.
        Indeed, recall that $K_\alpha \cong M^\alpha\otimes_S M$.
        We can extend the right $S$-module structure
        on $M^\alpha$ to an $(R^\op, S)$-bimodule structure by setting $r^\op \cdot m=mr$. It is  straightforward to check
        that $M^\alpha$ is an $(R^\op, S)$-progenerator (use the fact that $M$ is and $(S,R)$-progenerator)
        and that the isomorphism $K_\alpha \cong M^\alpha\otimes_S M$ is an isomorphism
        of $(R^\op, R)$-bimodules (recall that this isomorphism is given by $x\otimes_\alpha y\mapsto y\otimes_S x$).
        By Morita's Third Theorem (see \cite[\S18D]{La99}), $K_\alpha \cong M^\alpha\otimes_S M$ is an $(R^\op, R)$-progenerator,
        hence so is $K_\alpha$. Finally, for all $c\in\Cent(R)$, we have $b_\alpha(x,y)\mul{0}c=b_\alpha(xc,y)=
        b_\alpha(cx,y)=b_\alpha(x,c^\alpha y)=b_\alpha(x,y c^\sigma)=b(x,y)\mul{1} c^\sigma$, hence
        $k\mul{0} c=k\mul{1} c^\sigma$ for all $k\in K_\alpha$ and $c\in \Cent(R)$, i.e.\ $K_\alpha$ is of type $\sigma$.

        Conversely, assume $(M,b,K)$ is a right regular (resp.\ right regular
        and $\theta$-sym\-met\-ric) bilinear form with $K$ a double $R$-progenerator of type $\sigma$.
        If $b$ was left regular, then its right corresponding anti-endomorphism
        $\alpha$ would be bijective (the left corresponding anti-endomorphism would be its inverse) and of
        type $\sigma$ (straightforward), and hence
        be the required anti-automorphism of $S$.
        It is therefore enough to show that $b$ is left regular.

        Let $\Phi_M$ be defined as in Lemma~\ref{GEN:LM:dfn-of-phi}. Then $\lAd{b}=(\rAd{b})^{[0]}\circ \Phi_M$.
        Since $\rAd{b}$ is bijective, this means that we can satisfy with showing that $\Phi_M$
        is bijective, and since $M$ is f.g.\ projective and $\Phi$ is additive, it is enough to verify that $\Phi_R$ is bijective.
        But this follows from Lemma~\ref{GEN:LM:dfn-of-phi}(iv)
        because $\End({}_{R^\op}K)=R$.

        (ii) This easily follows from (i), Corollary~\ref{GEN:PR:basic-properties-of-b-alpha-II} and Remark~\ref{GEN:RM:lmb-is-not-invertible}
        (which explains why $\lmb$ of Corollary~\ref{GEN:PR:basic-properties-of-b-alpha-II} is invertible
        when $\alpha$ is bijective).

        (iii) If $\alpha$ is an involution of $S$, then $b_\alpha$ is $\theta_\alpha$-symmetric (see
        section \ref{section:GEN:preface}). The rest is routine.
    \end{proof}

    As a result of the proof, we get:

    \begin{cor}
        Let $R$ be a ring, let $K$ be a double $R$-module obtained from
        an $(R^\op, R)$-progenerator, and let $\rproj{R}$ denote the category
        of f.g.\ projective right $R$-modules. Then once restricted to
        $\rproj{R}$, the functors
        $[0],[1]$ are mutual inverses.
    \end{cor}

    \begin{proof}
        In the proof of Proposition \ref{FORM:PR:morita-context} we showed that any
        $M\in \rproj{R}$ is \emph{naturally} isomorphic to $M^{[1][0]}$ and by symmetry,
        $M$ is also naturally isomorphic to $M^{[0][1]}$. Finally, $R^{[i]}\cong K_i\in \rproj{R}$,
        hence the additivity of $[i]$ implies it takes $\rproj{R}$ into itself.
    \end{proof}

    We can now formulate a criterion for when a ring is Morita equivalent to a ring with an anti-automorphism (resp.\ involution).

    \begin{thm}\label{GEN:TH:when-R-is-Morita-equiv-to-a-ring-with-inv}
        Let $R$ be a ring and let $\sigma\in\Aut(\Cent(R))$. Then:
        \begin{enumerate}
            \item[(i)] $R$ is  Morita equivalent to a ring with  anti-automorphism
            of type $\sigma$ $\iff$
            there exists
            a right $R$-progenerator $M$ and a double $R$-progenerator $K$ of type $\sigma$ such that $M\cong M^{[1]}:=\Hom(M,K_0)$.
            \item[(ii)] $R$ is Morita equivalent to  a ring with involution (resp.\ anti-auto\-mor\-phism whose square is inner)
            of type $\sigma$ $\iff$
            there exists a double $R$-progenerator $K$ of type $\sigma$ admitting an involution (resp.\ anti-automorphism).
        \end{enumerate}
    \end{thm}

    \begin{proof}
        Observe that any bilinear form $b:M\times M\to K$ can be recovered from the map $\rAd{b}:M\to M^{[1]}$
        (via $b(x,y)=(\rAd{b}y)x$). In particular, there exists a right regular bilinear form on $M$ taking values in
        a double $R$-module $K$
        if and only if there is an isomorphism $M\to M^{[1]}$ (where $[1]$ is computed w.r.t.\ $K$).
        Therefore,  (i) and one of the implications of (ii) (namely,
        $\derives$) follows from Proposition~\ref{FORM:PR:morita-context}.

        Assume $K$ is a double $R$-progenerator of type $\sigma$, and $K$ admits an involution
        (resp.\ anti-automorphism)
        $\theta$. Let $P$ be any right $R$-progenerator
        and let $M=P\oplus P^{[1]}$. Define $b:M\times M\to K$ by $b(x\oplus f, y\oplus g)=
        gx+(fy)^{\theta}$. The form
        $b$ is clearly $\theta$-symmetric when $\theta$
        is an involution.
        We are thus done by Proposition~\ref{FORM:PR:morita-context} if we show that
        $b$ is  right regular. Indeed, a straightforward computation shows that
        \[
        \rAd{b}=\SMatII{0}{\id_{P^{[1]}}}{u_{\theta,P^{[1]}}\circ\Phi_P}{0}\in \Hom_R(P\oplus P^{[1]},P^{[1]}\oplus P^{[1][1]})
        \qedhere
        \]
         ($\Phi$ was defined
        in Lemma~\ref{GEN:LM:dfn-of-phi} and
        $u_{\theta}$ was defined in Proposition~\ref{FORM:PR:basic-props-of-forms-I}). As $\Phi_M$ and $u_{\theta,P^{[1]}}$
        are both bijective ($\Phi_M$ is bijective since $\Phi_R$ is bijective by Lemma~\ref{GEN:LM:dfn-of-phi}(iv)), so is $\rAd{b}$.
    \end{proof}

    \begin{remark}\label{GEN:RM:explicit-ring-with-inv}
        Let $K$ be a double $R$-progenerator with an involution (resp.\ anti-automorphism)
        $\theta$ and let $K^*=\Hom_R(K_R,R_R)$
        be the $(R,R^\op)$-progenerator dual to $K$. If we would choose
        $P=R_R$ in the proof of Theorem~\ref{GEN:TH:when-R-is-Morita-equiv-to-a-ring-with-inv},
        then the ring with involution (resp.\ anti-automorphism) $(W,\alpha)$ which the theorem yields
        is given by
        \[
        W:=\End_R(R\oplus R^{[1]})=\End_R(R\oplus K_R)=\SMatII{R}{K^*}{K}{R^\op}\ ,
        \]
        \[
        \alpha:\SMatII{a}{f}{k}{b^\op}\mapsto\SMatII{b}{f^\eta}{k^\theta}{a^\op}\ .
        \]
        Here, $a,b\in R$, $k\in K$, $f\in K^*$, and $f^\eta$ is defined
        to be the unique element of $K^*$ satisfying
        $(f^\eta k)^\op=[x\mapsto k^\theta\cdot (fx)]\in\End_R(K_R)=R^\op$ for all $k\in K$.
    \end{remark}

    Theorem \ref{GEN:TH:when-R-is-Morita-equiv-to-a-ring-with-inv}
    means that if we show every double $R$-progenerator $K$ admits an $R$-progenerator $M$ with
    $M\cong M^{[1]}$, then this would imply (3)$\derives$(2). Not surprisingly, the former condition
    is false in general (see Example \ref{FORM:EX:bad-progenerator}). However, under certain
    finiteness assumptions, one can
    prove a suitable ``Stable Point Theorem''
    for the additive monoid $(\rproj{R}/\!\cong,\oplus)$ and thus deduce that (3)$\derives$(2).
    This is done in the following theorem, whose proof is brought in the next section.

    \begin{thm}\label{FORM:TH:three-implies-two}
        Let $R$ be a ring admitting an $(R^\op,R)$-progenerator $K$.
        Assume $R$ is semilocal or $\dim_{\Q}(R\otimes_{\Z}\Q)$ and $|\ker (R\to R\otimes_{\Z}\Q)|$ are finite.\footnote{
            In this case, $R$ is called \emph{$\Q$-finite}.
        } Then every
        $P\in\rproj{R}$ admits a number $n\in\N$ such
        that $P\cong P^{n[1]}:=P^{[1][1]\dots[1]}$
        ($n$ times). In particular, there exists a right
        $R$-progenerator $P$ such that $P^{[1]}\cong P$.
    \end{thm}

    \begin{cor}\label{FORM:CR:three-implies-two}
        Assume $R$ is semilocal or an algebra of finite type over
        a Dedekind domain whose faction field is a number field. Then $R$ is Morita equivalent to $R^\op$ with
        equivalence of type $\sigma$
        if and only if $R$ is Morita equivalent to a ring with an anti-automorphism of type $\sigma$.
    \end{cor}

    Theorem \ref{GEN:TH:when-R-is-Morita-equiv-to-a-ring-with-inv} also implies that if we want
    to show (3)$\derives$(1) for a ring $R$, then it is enough to show that
    all (or at least some) double $R$-progenerators admit an involution.
    There is a surprising sufficient condition for this to happen, namely the existence of a
    \emph{Goldman element}. Recall that if $R$ is a ring and $C=\Cent(R)$, then
    a Goldman element is an element $g\in R\otimes_CR$ such
    that $g^2=1$ and $g(a\otimes_C b)g=b\otimes_C a$ for all $a,b\in R$.

    \begin{prp}
        Let $R$ be a ring and let $C=\Cent(R)$.
        If $R$ admits a Goldman element, then any double $R$-module $K$
        with $\mul{0}|_{K\times C}=\mul{1}|_{K\times C}$ has an involution.
    \end{prp}

    \begin{proof}
        Let $g$ be a Goldman element of $R$.
        The assumption on $K$
        allows us to view $K$ as an $R\otimes_C R$-module via $k\cdot(a\otimes_C b)=k\mul{0}a\mul{1}b$.
        It is now routine to check that  $k\mapsto kg$ is an involution of $K$.
    \end{proof}

    \begin{cor}\label{GEN:CR:Mor-equiv-to-op-cor-I}
        Assume $R$ has a Goldman element. Then $R$ is Morita equivalent to
        ring with involution of type $\id_{\Cent(R)}$ $\iff$ $R$ is Morita equivalent
        to $R^\op$ via equivalence of type $\id_{\Cent(R)}$.
    \end{cor}

    It is well-known that Azumaya algebras have a Goldman
    element (e.g.\ see \cite[p.\ 112]{KnusOjan74}), so we have obtained a new short
    proof of Theorem~\ref{FORM:TH:order-in-the-Brauer-grp}(i).\footnote{
        In fact, if we use the description
        of Remark \ref{GEN:RM:explicit-ring-with-inv}, then there is some similarity
        between our proof and the proof of \cite[Th.\ 3.4]{Sa78}
        (which is the final step in the proof
        of Theorem \ref{FORM:TH:order-in-the-Brauer-grp}(i) in \cite{Sa78}).
        One could also say that the proof of \cite[Th.\ 3.4]{Sa78}
        can be explained via general bilinear forms.
    } We can also
    prove part (ii) of Theorem~\ref{FORM:TH:order-in-the-Brauer-grp} by showing existence of involutions
    on double $R$-progenerators.
    However, we need to assume some mild assumptions on $\Cent(R)$.

    \begin{prp}\label{GEN:PR:double-R-mods-of-type-II}
        Let $C/C_0$ be a Galois extension
        of commutative rings with Galois group $\{1,\sigma\}$,
        let $R$ be an Azumaya $C$-algebra and let $K$ be a double $R$-progenerator of type $\sigma$.
        If $K$ has an involution, then $\Cor_{C/C_0}(R)\sim_{\Br}C_0$. The converse holds when
        all
        rank-$1$ projective $C$-modules are free (e.g.\ if $C$ is a PID or local).
    \end{prp}

    \begin{proof}
        Let $R^\sigma$ be the $R$-algebra obtained by viewing $R$ as a $C$-algebra via $\sigma:C\to C\cdot 1_R\subseteq R$.
        Observe that $\sigma$ extends to an automorphism of $S:=R\otimes_C R^\sigma$ given by $(a\otimes b)^\sigma=b\otimes a$.
        Recall that $\Cor_{C/C_0}(R)$ is defined to be
        the Azumaya $C_0$-algebra $S^{\{1,\sigma\}}$ consisting of of elements
        fixed by $\sigma$.
        There is a homomorphism of $C$-algebras $\vphi:S\to\End_{C}(K_0)$ given by
        $(\vphi(a\otimes b))k=k\mul{0} a\mul{1} b$ (this is well defined since $K$ is of type $\sigma$).
        As both $S$ and $\End_C(K_0)$ are Azumaya $C$-algebras (\cite[Th.\ 2.2d, Pr.\ 2.5]{Sa99})
        of equal ranks (since $R^\op\cong\End_R(K_1)$), the map $\vphi$ is actually an isomorphism (\cite[Cr.\ 2.9a]{Sa99}).
        Consider $S$ as a subring of $T:=\End_{C_0}(K_1)$ via $\vphi$. Then by
        \cite[Th.\ 2.8]{Sa99},
        $T\cong S^{\{1,\sigma\}} \otimes_{C_0}\Cent_{T}(S^{\{1,\sigma\}})$. Thus, $\Cor_{C/C_0}(R)=S^{\{1,\sigma\}}\sim_{\Br} C_0$
        $\iff$ $\Cent_{T}(S^{\{1,\sigma\}})\sim_{\Br}C_0$. Let $Q:=\Cent_{T}(S^{\{1,\sigma\}})$. Then the copy
        of $C$ inside $T$ clearly lies in $Q$ and it is routine to check that $\Cent_Q(C)=C$ (use
        the fact that $S^{\{1,\sigma\}} C=S$, which holds since $C/C_0$ is Galois).

        Assume $K$ is has an involution $\theta$. Then $\theta$ lies in $Q$ and satisfies
        $\theta^2=1$ and $\theta c\theta^{-1}=c^\sigma$ for all $c\in C$.
        Since $C/C_0$ is Galois, $\End_{C_0}(C)$ is isomorphic to the trivial crossed product
        $\Delta(C/C_0,G,1)$ (see \cite[Chs.\ 6-7]{Sa99}), hence $\End_{C_0}C=C\oplus \sigma C$.
        Therefore, there is a homomorphism $\Hom_{C_0}(C)\to Q$ given by sending $C$ to itself
        and $\sigma$ to $\theta$. Again, as $\Hom_{C_0}(C)$ and  $Q$ are Azumaya of equal rank (as $C_0$-algebras),
        they are isomorphic, hence $Q$ and $\Cor_{C/C_0}(R)$ are trivial.

        Now assume $\Cor_{C/C_0}(R)\sim_{\Br} C_0$ and every
        rank-$1$ projective $C$-module is free.
        Then $Q\sim_{\Br} C_0$.
        Since $\Cent_Q(C)=C$, \cite[Sec.\ 4.2.18]{DeMeyIngr71SeparableAlgebras} implies that $Q$ is isomorphic
        to a crossed product $\Delta(C/C_0,G,f)$ where $G=\{1,\sigma\}$, $f\in H^2(G,\units{C})$, and the isomorphism
        $Q\cong \Delta(C/C_0,G,f)$ fixes $C$. By \cite[Th.\ 4.1.1]{DeMeyIngr71SeparableAlgebras},
        the assumption that every rank-$1$ projective $C$-module is free implies
        that the map $H^2(G,\units{C})\to\Br(C_0)$ is injective, hence $f$ can be taken to be the trivial
        character. Therefore, there is an
        element $\theta\in Q$ such that $\theta^2=1$ and $\theta c\theta^{-1}=c^\sigma$ for all $c\in C$.
        It is now routine to verify that $\theta$ is an involution of $K$.
    \end{proof}

    \begin{remark}
        The formulation of \cite[Th.\ 4.1.1]{DeMeyIngr71SeparableAlgebras} shows that
        the second assertion of
        Proposition \ref{GEN:PR:double-R-mods-of-type-II} actually holds under the weaker assumption
        that there are no non-free rank-$1$ $C$-progenerators $P$ for which $P^\sigma\cong P$.
    \end{remark}

    \begin{proof}[Proof of Theorem~\ref{FORM:TH:order-in-the-Brauer-grp}(ii), provided all rank-$1$ $C$-modules are free]
        Assume  that\linebreak $(R\otimes_C R^\sigma)^{\{1,\sigma\}}=\Cor_{C/C_0}(R)\sim C_0$.
        By \cite[Pr.\ 6.11]{Sa99}, we have $R\otimes_C R^\sigma\cong \Cor_{C/C_0}(R)\otimes_{C_0} C$, hence
        $R\otimes_C R^\sigma\sim_{\Br} C$ $\derives$ $R^\sigma\sim_{\Br}  R^\op$. By \cite[Cr.\ 17.2]{Ba64}, this
        means $R^\sigma$ is Morita equivalent to $R^\op$ as $C$-categories, and this clearly implies the existence of a double $R$-progenerator
        of type $\sigma$. Proposition~\ref{GEN:PR:double-R-mods-of-type-II} and Theorem~\ref{GEN:TH:when-R-is-Morita-equiv-to-a-ring-with-inv}(ii)
        now imply that $R$ is Brauer equivalent to an Azumaya algebra with an involution of type $\sigma$.

        Conversely, if $R$ is Morita equivalent to an Azumaya algebra with involution, then
        Theorem~\ref{GEN:TH:when-R-is-Morita-equiv-to-a-ring-with-inv} means that there exists a double $R$-progenerator with involution,
        so Proposition~\ref{GEN:PR:double-R-mods-of-type-II} implies that $\Cor_{C/C_0}(R)\sim_{\Br} C_0$.
    \end{proof}

    We finish this section with several counterexamples. The first of which presents
    a double $R$-progenerator $K$ such that $[1]$ does not
    admit a non-zero stable module, and the others demonstrate that (2)$\nderives$(1).

    \begin{example}\label{FORM:EX:bad-progenerator}
        \rem{There is a ring $R$ and an $(R^\op,R)$-progenerator $K$ such that $[1]$ does not
        admit a non-zero stable module:
        }Let $F$ be a field and let $R=\dirlim\{\nMat{F}{2}^{\otimes n}\}_{n\in\N}$.
        Then any f.g.\ projective right $R$-module is obtained by scalar extension from
        a f.g.\ projective module over $\nMat{F}{2}^{\otimes n}\hookrightarrow R$. Using this, it
        not
        hard (but tedious) to show that the monoid $(\rproj{R}/\!\cong,\oplus)$
        is isomorphic to $(\Z[\frac{1}{2}]\cap [0,\infty),+)$. (If $V_n$ is the unique indecomposable
        projective right module over $\nMat{F}{2}^{\otimes n}$, then $V_n\otimes R$ is mapped to $2^{-n}$).

        Let $T$ denote the transpose involution on $\nMat{F}{2}$. Then
        $\what{T}=\dirlim\{T^{\otimes n}\}_{n\in\N}$ is an involution of $R$.
        Let $K=R^2\in\rproj{R}$. Then $\End_R(K)\cong \nMat{R}{2}\cong R$ and using
        $\what{T}$, we can identify $\End_R(K)$ with $R^\op$, thus making $P$ into an $(R^\op,R)$-progenerator.
        We claim that for $M\in\rproj{R}$, $M^{[1]}\cong M$ implies $M=0$; in particular, there is no $R$-progenerator
        $P$ with $P^{[1]}\cong P$.
        To see this,
        let $\vphi_1$ be the map obtained from $[1]$ by identifying $\rproj{R}/\!\cong$
        with $\Z[\frac{1}{2}]\cap [0,\infty)$.
        Then $\vphi_1(2)=1$ because $(R_R^2)^{[1]}\cong K_1^2\cong R_R$.
        Therefore, $\vphi_1(x)=\frac{1}{2}x$
        for all $x\in \Z[\frac{1}{2}]\cap [0,\infty)$, which means
        that $\vphi_1(x)\neq x$ for all $0\neq x\in \Z[\frac{1}{2}]\cap [0,\infty)$.
    \end{example}

    \begin{example}\label{FORM:EX:div-ring-without-involution}
        Various f.d.\ division algebras admitting an anti-automorphism
        but no involution are constructed in \cite{MoranSethTig05}.
        A well-known result of Albert (\cite[Th.\ 10.12]{Al61StructureOfAlgs}) states that the ring of matrices over
        a f.d.\ division algebra has an involution if and only if the algebra has an involution (this
        is generalized to arbitrary division algebras in \cite[Th.\ 1.2.2]{Her76}
        and also in Proposition~\ref{GEN:PR:inv-over-local-rings}
        below), so the examples of \cite{MoranSethTig05}
        are not Morita equivalent to a ring with involution.
\rem{
        We note that the anti-automorphisms
        of \cite{MoranSethTig05} has finite order once restricted to the center, but that order
        is always larger than $2$. To our best knowledge, there is no known example of a f.d.\ division
        algebra without involution that admits an anti-automorphism of order two on its center.
}
    \end{example}

    \begin{example} \label{FORM:EX:two-nderives-one}
        Recall that a \emph{poset} consists of finite set $I$ equipped with a transitive reflexive relation
        which we denote by $\leq$.
        For a field $F$ and a poset $I$, the \emph{incidence algebra} $A=F(I)$ is defined to be
        the subalgebra of the $I$-indexed matrices over $F$
        spanned as an $F$-vector space by $\{e_{ij}\where i,j\in I,\ i\leq j\}$.

        The poset $I$
        can be recovered (up to isomorphism) from $A$ as follows: The ring
        $B=A/\Jac(A)$ is a semisimple ring. Let $e_1,\dots,e_t$ be the primitive idempotents
        of $\Cent(B)$ and let $\ell_i=\mathrm{length}(e_iBe_i)$.
        Since $\Jac(A)$ is nil, $e_1,\dots,e_t$ can be lifted to orthogonal idempotents
        $f_1,\dots,f_t\in B$ (the $f$-s are uniquely determined up to
        conjugation). Define $I'$ to be the
        set of formal variables $\{x_{ij}\where 1\leq i\leq t, 1\leq j\leq \ell_i\}$
        and let $x_{ij}\leq x_{kl}$ $\iff$ $f_iAf_k\neq 0$. Then $A\cong F(I')$.
        This implies that two incidence algebras are isomorphic (as rings) if and only if
        their underlying posets are isomorphic. Moreover, any anti-automorphism (resp.\ involution) of $A$
        permutes $e_1,\dots,e_t$, preserves $\ell_1,\dots,\ell_t$, reverse the order in $I'$, and thus induces
        an anti-automorphism (resp.\ involution) of $I'$. It follows
        that $A$ has an anti-automorphism (resp.\ involution) if and only if $I$ has one.
        (Note that this explains why the ring $R$ of Example~\ref{GEN:EX:form-over-incidence-algebra}
        does not have an anti-automorphism; it is the incidence algebra of a poset without an anti-automorphism.)

        Any poset $(I,\leq)$ gives rise to an equivalence relation $\sim$ on $I$ defined by
        $i\sim j$ $\iff$ $i\leq j$ and $j\leq i$. The quotient set $I/\!\sim$ can be made into
        a poset by defining $[x]\leq [y]$ $\iff$ $x\leq y$ (where $[x]$ is the equivalence class of $x$).
        It is well-known
        $F(I/\!\sim)$ is Morita equivalent to $F(I)$. Moreover,
        A ring $R$ is Morita equivalent to $F(I)$ $\iff$ there exists a poset $J$ with $J/\!\sim\cong I/\!\sim$
        such that $R\cong F(J)$.\rem{\footnote{
            For sake of completion,
            here is a sketch of the proof: If $R$ is Morita equivalent to $F(I)$, then $R$ is semiperfect, and for
            every two indecomposable projective right $R$-modules $P$, $Q$, we have $\End_R(P_1)\cong F$ and $\Hom_R(P,Q)$, when not zero, is
            a $1$-dimensional right (resp.\ left) $\End_R(P)$-vector space (resp.\ $\End_R(Q)$-vector space).
            In addition, if $P_1,P_2,P_3$ are indecomposable projective $R$-modules with
            $\Hom_R(P_1,P_2),\Hom_R(P_2,P_3)\neq 0$, then $\Hom_R(P_1,P_3)\neq 0$.
            Let $P_1,\dots,P_t$ be a complete set of all indecomposable projective right $R$-modules up to isomorphism.
            Write $i\leq j$ whenever $\Hom_R(P_j,P_i)\neq 0$.
            For all $i\leq j$, we identify $\End_R(P)$ with $\End_R(P_i)$ by sending
            $a\in\End_R(P_j)$ to the unique element $a'\in\End_R(P_i)$ satisfying $f\circ a=a'\circ f$
            for all $f\in\Hom_R(P_i,P_j)$. It is straightforward to check that these identifications
            respects composition. (That is, if $i\leq j\leq k$, then
            the isomorphism $\End_R(P_i)\to \End_R(P_j)\to\End_R(P_k)$ is just the isomorphism
            $\End_R(P_i)\to \End_R(P_k)$.) Using this, it is easy to see that $\End_R(P_1\oplus\dots\oplus P_t)\cong
            F(J)$ with $J=(\{1,\dots,t\},\leq)$.
        }}

        Now observe that if $I$ admits an involution, then so is $I/\!\sim$. Therefore, by the previous paragraphs,
        if we can find $I$ such that $I=I/\!\sim$ (i.e.\ $\leq$ is anti-symmetric) and $I$ admits
        an anti-automorphism but no involution, then any ring that is Morita equivalent to $F(I)$ does
        not have an involution. (Otherwise, this would imply that $I=I/\!\sim$ has an involution).
        Such an example was given in \cite{Sch74} by Scharlau (for other purposes); $I$ is
        the $12$-element poset whose Hasse
        diagram is:
        \[
        \xymatrix{
                              & \bullet \ar[r] & \bullet        & \\
        \bullet \ar[ur]\ar[d] & \bullet \ar[u] & \bullet        & \bullet \ar[ul]\ar[l] \\
        \bullet \ar[dr]\ar[r] & \bullet        & \bullet \ar[d] & \bullet \ar[u]\ar[dl] \\
                              & \bullet        & \bullet \ar[l]
        }
        \]
        (Using Scharlau's words, it is ``the simplest example I could find''.) The anti-automorphism
        of $I$ is given by rotating the diagram by ninety degrees clockwise and it fixes $F$, the center of $F(I)$.
    \end{example}

\section{Proof of Theorem \ref{FORM:TH:three-implies-two}}

    In this section, we prove Theorem \ref{FORM:TH:three-implies-two}. We begin with the following
    two lemmas which, roughly, show that the functor $[1]$ commutes with certain scalar changes.

    \begin{lem}\label{FORM:LM:one-dual-commutes-with-scalar-ext-I}
        Let $R$ be a ring, let $K$ be an $(R^\op,R)$-progenerator and let $N$ be the
        prime radical (resp.\ Jacobson radical) of $R$. For all $P\in\rproj{R}$ define
        $\quo{P}:=P/PN\cong P\otimes_R (R/N)\in\rproj{R/N}$.
        Then
        $\quo{K}$ is an $(\quo{R}^\op,\quo{R})$-progenerator and $\quo{P^{[1]}}\cong \quo{P}^{[1]}$ for all
        $P\in \rproj{R}$ (the functor $[1]$ is taken w.r.t.\ $K$ in the l.h.s.\ and w.r.t.\ $\quo{K}$ in the
        r.h.s.).
    \end{lem}

    \begin{proof}
        Let $R,S$ be arbitrary rings and let $K$ be any $(S,R)$-progenerator.
        By \cite[Pr.~18.44]{La99}, there is an isomorphism between the lattice of $R$-ideals and
        the lattice of $(S,R)$-submodules of $P$ given by $I\mapsto KI$. Similarly, the ideals of $S$
        correspond to $(S,R)$-submodules of $P$ via $J\mapsto JK$, hence every ideal $I\idealof R$
        admits a unique ideal $J\idealof S$ such that $JK=KI$. The ideal $J$ can also be
        described as $\Hom_R(K,KI)\subseteq\End_R(K)=S$. This description implies that
        $S/J= \Hom_R(K,K)/\Hom(K,KI)\cong\Hom_R(K,K/KI)\cong\End_{R/I}(K/KI)$. Thus,
        $K/KI$ is an $(S/J,R/I)$-progenerator.

        Now assume $S=R^\op$. By the previous paragraph, there is a unique ideal $N'\idealof R^\op$
        corresponding to $N\idealof R$ as above, and by \cite[Cr.\ 18.45]{La99} (resp.\
        \cite[Cr.\ 18.50]{La99}) $N'$ is the prime radical (resp.\ Jacobson radical) of $R^\op$, hence $N'=N^\op$,
        $N^\op K=KN$,
        and
        $\quo{R}^\op\cong \End_{\quo{R}}(\quo{K})$.
        Let $P\in\rproj{R}$.
        We claim that $P^{[1]}N=\Hom_R(P,(KN)_0)$ (we consider $KN$ as a double $R$-module here).
        Indeed, by definition, $P^{[1]}N=\Hom_R(P,K_0)N\subseteq \Hom_R(P,(K\mul{1} N)_0)=\Hom_R(P,(N^\op K)_0)=\Hom_R(P,(KN)_0)$,
        so there is a natural inclusion $P^{[1]}N\subseteq \Hom_R(P,(KN)_0)$. As this inclusion is additive in $P$,
        it is enough to verify the equality for $P=R_R$, which is routine. Now, we get
        \[
        \quo{P^{[1]}}=\frac{\Hom_R(P,K_0)}{P^{[1]}N}=
        \frac{\Hom_R(P,K_0)}{\Hom_R(P,(KN)_0)}\cong \Hom_R(P,\quo{K}_0)\cong \Hom_{\quo{R}}(\quo{P},\quo{K}_0)=\quo{P}^{[1]},
        \]
        as required.
    \end{proof}

    \begin{lem}\label{FORM:LM:one-dual-commutes-with-scalar-ext-II}
        Let $R$ be a ring, let $K$ be an $(R^\op,R)$-progenerator of
        type $\sigma$, let $C\subseteq \Cent(R)$
        be a subring fixed by $\sigma$, and let $D/C$ be a commutative ring extension.
        Set $R_D=R\otimes_C D$, and
        for every $P\in\rproj{R}$, define
        $P_D:= P\otimes_R R_D\in\rproj{R_D}$.
        Then $K_D$ is an $(R_D^\op,R_D)$-progenerator and $(P^{[1]})_D\cong (P_D)^{[1]}$
        for all $P\in \rproj{R}$ (the functor $[1]$ is taken w.r.t.\ $K$ in the l.h.s.\ and w.r.t.\ $K_D$ in the
        r.h.s.).
    \end{lem}

    \begin{proof}
        Let $S,R$ be arbitrary rings, let $K$ be an $(S,R)$-progenerator
        with induced isomorphism $\sigma:\Cent(R)\to\Cent(S)$, and let $C$
        be a subring of $\Cent(R)$. We view $S$ as a $C$-algebra via $\sigma:C\to \Cent(S)$.
        Let $D/C$ be any ring extension.
        We first observe that for all $A,B\in\rproj{R}$,
        $\Hom_R(A,B)$
        can be considered
        as a right $C$-module via $(f\cdot c)a=f(a\cdot c)$ (where $f\in\Hom_R(A,B)$, $c\in C$, $a\in A$).
        Note that when we apply this to $A=B=K_R$, we get the previous $C$-algebra structure on $S$.
        In addition, we claim that $\Hom_R(A,B)\otimes_C D\cong \Hom_{R_D}(A_D,B_D)$ via $(f\otimes d)\mapsto [(a\otimes d')\mapsto (f(a)\otimes dd')]$.
        Indeed, since this homomorphism is additive in both $A$ and $B$ (in the categorical sense),
        it enough to verify it is bijective for $A=B=R_R$, which is routine.
        Thus, we get $S\otimes_C D\cong \End_{R_D}(K_D)$ (and this  isomorphism is easily checked to be an isomorphism of rings).

        Now assume $S=R^\op$, $\sigma|_C=\id_C$ and $D$ is commutative. These assumptions allows us to identify
        $R_D^\op$ with $S\otimes_C D=R^\op \otimes_C D$ in the obvious way.
        We then get  $(P_D)^{[1]}=\Hom_{R_D}(P_D,(K_D)_0)\rem{= \Hom_{R_D}(P_D,(K_D)_{R_D})}\cong
        \Hom_R(P,K_0)\otimes_C D= (P^{[1]})_D$ via $(f\otimes d)\mapsto [(p\otimes d')\mapsto (f(p)\otimes dd')]$
        and it is straightforward to check that this map is a right $R_D$-module homomorphism.
    \end{proof}

    We will also need the following lemma, which is based on \cite[Pr.\ 18.2]{Ba64}.

    \begin{lem}\label{GEN:LM:Q-finite-rings}
        Let $R$ be a ring such that $\dim_{\Q}(R\otimes_{\Z}\Q)$ and $|\ker (R\to R\otimes_{\Z}\Q)|$
        are finite, and let $N$ be the prime radical of $R$. Then $N$ is nilpotent
        and $R/N\cong T\times \Lmb$ where $T$ is a semisimple tosion $\Z$-algebra
        and  $\Lmb$ is subring of a semisimple $\Q$-algebra $E$ such that $\Lmb\Q=E$.
    \end{lem}

    \begin{proof}
        Let $T=\ker(R\to R\otimes_{\Z}\Q)$. Then $T$ is a ideal of $R$.
        Consider $T$ as a (non-unital) ring and let $J=\Jac(T)$ (i.e.\ $J$ is the intersection
        of the annihilators of simple right $T$-modules $M$ with $MT=M$). Arguing as in \cite[Pr.\ 18.2]{Ba64},
        we see that $J$ is also an $R$-ideal. Since $J$ and all its submodules are finite (as sets), $J^n=0$ for some $n$
        (because $MJ\subseteq N$ for any right $T$-module $M$ and any  maximal submodule $N\leq M$).
        In particular, $J\subseteq N$.

        Replacing $R$ with $R/J$, we may assume $J=0$.
        Now, $T$ is semisimple and of finite length, hence it has a unit $e$. As $er,re\in T$ for all $r\in R$,
        we see that $er=ere=re$. Thus, $e\in\Cent(R)$ and $R\cong T\times (1-e)R$. As $\Lmb:=(1-e)R$ is torsion-free,
        it is a subring of $E:=\Lmb\otimes_{\Z}\Q$, which is a f.d.\ $\Q$-algebra by assumption.

        Let $I=\Jac(E)\cap\Lmb$.
        Then $I$ is nilpotent, hence $I\times 0\subseteq N$.
        Replacing $R$ by $R/(I\times 0)$, we may assume $E$ semisimple.
        We are thus finished if we show that the prime radical of $\Lmb$, denoted $N'$, is $0$ (because then $N=N'\times 0=0$).
        Indeed, by \cite[Th.\ 2.5]{BayKeaWil87}, $\Lmb$ is noetherian (here we need $E$ to be semisimple).
        Thus, $N'$ is nil, hence so is $N'\Q\idealof E$. But $E$ is semisimple, so we must have $N'\Q=0$
        and this means $N'=0$.
    \end{proof}

    \begin{proof}[Proof of Theorem \ref{FORM:TH:three-implies-two}]
        We let $N$ denote the Jacobson radical of $R$ in case $R$ is semilocal
        and the prime radical of $R$ otherwise. In the latter case, $N$
        is nilpotent by Lemma~\ref{GEN:LM:Q-finite-rings}. In particular, $N\subseteq\Jac(R)$ in both cases.

        Using the notation of Lemma~\ref{FORM:LM:one-dual-commutes-with-scalar-ext-I}, observe
        that every $P\in\rproj{R}$ is the projective cover of $\quo{P}=P/PN$.
        As projective covers are
        unique up to isomorphism, it follows that the map $P\mapsto \quo{P}$
        from $\rproj{R}/\!\cong$ to $\rproj{\quo{R}}/\!\cong$ is injective (see
        \cite[Lm.\ 17.1]{Ba64} for a similar statement), i.e.\ $P\cong Q$ $\iff$ $\quo{P}\cong \quo{Q}$.

        Assume $R$ is semilocal. Then $\quo{R}$ is semisimple, hence $(\quo{R}/\!\cong,\oplus)$
        is just a free monoid over finitely many generators, which are the isomorphism classes of the indecomposable
        $\quo{R}$-modules. Since $[1]:\rproj{\quo{R}}\to\rproj{\quo{R}}$
        is additive and admits a mutual inverse, it must send an indecomposable $\quo{R}$-module into
        an indecomposable $\quo{R}$-module. In particular, $[1]$ (w.r.t.\ $\quo{K}$)
        preserves length in $\rproj{\quo{R}}$.
        For every $P\in \rproj{R}$, define $i(P)=\mathrm{length}(\quo{P})$. Then by Lemma~\ref{FORM:LM:one-dual-commutes-with-scalar-ext-I}(i),
        $i(P^{[1]})=
        \mathrm{length}(\quo{P^{[1]}})=\mathrm{length}(\quo{P}^{[1]})=\mathrm{length}(\quo{P})=
        i(P)$, hence $[1]$
        permutes the class of modules $Q$ with $i(Q)=i(P)$.
        But, up to isomorphism, this class is finite (since there are finitely many right $\quo{R}$-module
        of length $i(P)$ up to isomorphism). Thus, $P^{n[i]}$ is isomorphic to $P$ for some $n\in\N$.

        Now assume is not semilocal and $\dim_{\Q}(R\otimes_{\Z}\Q)$, $|\ker (R\to R\otimes_{\Z}\Q)|$
        are finite.
        By Lemma~\ref{GEN:LM:Q-finite-rings}, $R/N\cong T\times \Lmb$ where
        $T$ is a semisimple torsion $\Z$-algebra
        and $\Lmb$ is subring of a semisimple $\Q$-algebra $E$ such that $\Lmb\Q=E$.
        Since $R$ is not semilocal, we may assume $\Lmb\neq 0$ and hence, $\Z\subseteq\Lmb$.
        Let $\sigma$ be the type of $\quo{R}$ and let $e$ be the unity of $T$.
        Since $e$ is the maximal $\Z$-torsion idempotent in $\Cent(R)$, it is fixed
        by $\sigma$. Thus, the ring $C:=\Z e\times \Z\subseteq T\times \Lmb$ is also fixed by $\sigma$.
        Consider $D=\Z e\times \Q$ as a $C$-algebra via the inclusion $C\subseteq D$.
        Then $W:=\quo{R}\otimes_C D\cong E\times T$
        is semisimple and by Lemmas~\ref{FORM:LM:one-dual-commutes-with-scalar-ext-I} and
        \ref{FORM:LM:one-dual-commutes-with-scalar-ext-II}, $\quo{K}_D=\quo{K}\otimes_C D$
        is a $(W^\op,W)$-progenerator.

        For every $P\in\rproj{R}$, define $j(P)=\mathrm{length}(\quo{P}_D)$. Then,
        as in the semilocal case, Lemmas~\ref{FORM:LM:one-dual-commutes-with-scalar-ext-I} and
        \ref{FORM:LM:one-dual-commutes-with-scalar-ext-II} imply that $j(P^{[1]})=j(P)$. It is therefore enough
        to show that for every $P\in\rproj{R}$, there are finitely many $Q$-s with $j(Q)=j(P)$, up to isomorphism, and proceed as
        in the semilocal case.
        Indeed, by \cite[Th.\ 2.8]{BayKeaWil87} (see also \cite[Th.\ 26]{Rei70} and the comment right after),
        for every $d\in\N$,
        there
        are, up to isomorphism, only finitely many right $\Lmb$-modules $M$ with $\dim_{\Q}M\otimes_{\Z}\Q =d$.
        It follows that for every f.g.\ right $W$-module $V$, there are finitely many
        isomorphism classes of projective right $\quo{R}$-modules $M$ with $M_D\cong V$.
        As there are finitely many $V$-s of a given length up to isomorphism, we are done.

        To finish, note that if $R$ satisfies the assumptions of the theorem, then there is $n\in\N$
        such that $R^{n[1]}\cong R$. Thus, $P:=\bigoplus_{m=1}^n R^{m[1]}$ is a progenerator satisfying $P^{[1]}\cong P$.
    \end{proof}

\section{Semiperfect Rings}
\label{section:GEN:involutions}

    Let $R$ be a semilocal ring that is Morita equivalent to its oppositive. While Corollary~\ref{FORM:CR:three-implies-two}
    implies that $R$ is Morita equivalent to  a ring with an anti-automorphism,  it does not provide any information
    about what this ring might be. However, when  $R$ is \emph{semiperfect}, we can
    actually point out a specific ring which is Morita equivalent to $R$ and has an anti-automorphism.

    Recall that a ring $R$ is \emph{semiperfect} if $R$ is semilocal
    and $\Jac(R)$ is idempotent lifting (e.g.\ if $\Jac(R)$ is nil). This equivalent to the
    bijectivity of map
    \[P\mapsto P/P\Jac(R)~:~\rproj{R}/\!\cong~~\to~~ \rproj{(R/\Jac(R))}/\!\cong\]
    (e.g.\ see \cite[\S2.9]{Ro88} or
    \cite[Th.\ 2.1]{Ba60}). Thus, up to isomorphism,
    $R$ admits finitely many indecomposable projective right $R$-modules
    $P_1,\dots,P_t$ and any $P\in\rproj{R}$ can be written as $P\cong \bigoplus_{i=1}^t P_i^{n_i}$
    with $n_1,\dots,n_t$ uniquely determined. In particular, $R_R\cong \bigoplus_{i=1}^t P_i^{m_i}$
    for some (necessarily positive) $m_1,\dots,m_t$.
    The ring $R$ is called \emph{basic} if $m_1=\dots=m_t=1$, namely, if $R_R$ is a sum of
    non-isomorphic indecomposable projective modules.
    It is well-known that every semiperfect ring $R$
    admits a basic ring that is Morita equivalent to it, and this ring is unique up to isomorphism (see
    \cite[Prp.\ 18.37]{La99} and the preceding discussion). Explicitly, the basic ring that is Morita equivalent
    to $R$ is $\End_R(M)$, where $M=P_1\oplus\dots\oplus P_t$.

    Assume now that there is an $(R^\op,R)$-progenerator $K$. Then the functor $[1]$ must
    permute the isomorphism classes of $P_1,\dots,P_t$ (because they are the only indecomposable nonzero modules in
    $\rproj{R}$), hence stabilize $M$. Therefore, by
    Proposition~\ref{FORM:PR:morita-context}, $\End_R(M)$, the basic ring which is Morita equivalent to $R$,
    has an anti-automorphism. We have thus obtained the following proposition.

    \begin{prp}\label{FORM:PR:anti-automorphisms-pass-to-the-basic-ring}
        Let $R$ be a semiperfect ring and let
        $S$ be the basic ring that is Morita equivalent to $R$.
        Identify $\Cent(S)$ with $\Cent(R)$ via the equivalence. Then $R$ is Morita equivalent to $R^\op$ with
        equivalence of type $\sigma$
        $\iff$ $S$ has an anti-automorphism of type $\sigma$.
    \end{prp}

\rem{
    By taking $R$ to be a simple artinian ring, we obtain the following result, which
    is apparently new when $D$ is of infinite dimension over its center.

    \begin{cor}
        Let $D$ be a division ring. Then $M_n(D)$ is Morita equivalent to $M_n(D)^\op$ $\iff$ $D\cong D^\op$.
    \end{cor}
}

    Proposition \ref{FORM:PR:anti-automorphisms-pass-to-the-basic-ring}
    has  a slightly weaker version for involutions.

    \begin{prp}\label{FORM:PR:involutions-pass-to-the-basic-ring}
        Let $R$ be a semiperfect ring and let
        $S$ be the basic ring that is Morita equivalent to $R$. Identify $\Cent(S)$ with $\Cent(R)$ via the equivalence.
        If $R$ has an involution of type $\sigma$, then so does $\nMat{S}{2}$.
    \end{prp}

    \begin{proof}
        Let $\alpha$ be an involution of $R$ of type $\sigma$ and let $K$ be the standard double $R$-module of $(R,\alpha)$.
        Then $K$ is double $R$-progenerator of type $\sigma$ admitting an involution (namely, $\alpha$).
        For any $P\in\rproj{R}$, let $b_P$ denote the bilinear form $b$ constructed in
        the proof of Theorem~\ref{GEN:TH:when-R-is-Morita-equiv-to-a-ring-with-inv}. Then
        $(b_P,P\oplus P^{[1]},K)$ is a $\theta$-symmetric right regular bilinear space.
        Let $P_1,\dots,P_t$ be a complete list of all indecomposable projective right $R$-modules up to isomorphism.
        Then $b:=b_{P_1}\perp\dots\perp b_{P_t}$ is a right regular $\theta$-symmetric bilinear form
        defined over $M\oplus M^{[1]}$, where $M=P_1\oplus\dots\oplus P_t$. We have seen above that
        we must have $M\cong M^{[1]}$, and hence $\End_R(M\oplus M^{[1]})\cong\nMat{\End_R(M)}{2}\cong\nMat{S}{2}$.
        Thus, by Proposition~\ref{FORM:PR:morita-context}, $\nMat{S}{2}$ has an involution of type $\sigma$.
    \end{proof}

    It is still open whether that $R$ has an involution implies $S$ has an involution. However, this
    can be shown in special cases.

    \begin{prp}\label{GEN:PR:inv-over-local-rings}
        Let $L$ be a local ring
        and let $R=\nMat{L}{n}$. Assume $2\in\units{L}$ or $L$ is a division ring. Then
        $R$ has an involution of type $\sigma$ $\iff$ $L$ has an involution of type $\sigma$.
    \end{prp}

    \begin{proof}
        That $R$ has an involution when $L$ has an involution is left to the reader.

        Let $\alpha$ be an involution of $R$ and let $K$, $\theta$ be as in the proof of
        Proposition~\ref{FORM:PR:involutions-pass-to-the-basic-ring}.
        Let $P$ be the unique indecomposable projective right $R$-module.
        Then necessarily $P\cong P^{[1]}$. Fix an isomorphism $f:P\to P^{[1]}$
        and observe that the bilinear form $b(x,y):=(fx)y+((fy)x)^\theta$
        (resp.\ $b'(x,y):=(fx)y-((fy)x)^\theta$) is $\theta$-symmetric (resp.\ $(-\theta)$-symmetric).
        In addition, $\rAd{b}+\rAd{b'}=2f$.

        Assume $2\in\units{L}$. Then $f^{-1}\circ\rAd{b}+f^{-1}\circ\rAd{b'}=2\id_P$.
        Since the r.h.s.\ is invertible and lies in $\End_R(P)\cong L$, one of  $f^{-1}\circ\rAd{b}$, $f^{-1}\circ\rAd{b'}$
        must be invertible, hence one of $b$, $b'$ is right regular. In any case, we get that $L\cong\End_R(P)$ has an involution
        of type $\sigma$ by Proposition \ref{FORM:PR:morita-context}.

        Now assume $L$ is a division ring. Then $P$ is simple, hence $\rAd{b}$ (resp.\ $\rAd{b'}$) is an isomorphism when not
        zero. As $\rAd{b}+\rAd{b'}=f\neq 0$, either $b$ or $b'$ is right regular, so we are through.
    \end{proof}

    We conclude this paper with the following example, which shows that if we do not assume
    $R$ is semiperfect, then it is possible that $\nMat{R}{2}$ has  an involution, but
    $R$ does not.
    The example use fractional ideals, so in order to distinguish between the $n$-th power
    of a fractional ideal $I$ and the direct sum of $n$ copies of that ideal, the latter will
    be denoted by $I^{\oplus n}$.

    \begin{example}\label{GEN:EX:last-example}
        Let $K$ be a number field such that $\Gal(K/\Q)=\{\id_K\}$ and let $D$ be the integral closure
        of $\Z$ in $K$. Assume that there is a fractional $D$-ideal $I$ such that $I$ has order $4$ in the
        class group of $K$ (i.e.\ $I^4$ is principal, but $I^2$ is not principal) and $I^2$
        is not a fourth power in the class group. Let $M=D^{\oplus 3}\oplus I$ and let $S=\End_D(M)$.
        Observe that $M^{\oplus 4}\cong D^{\oplus 16}$, hence $\nMat{S}{4}\cong \nMat{D}{16}$. Thus, $\nMat{S}{4}$ has an involution.
        We claim that $S$ does not have an anti-automorphism. Assume by contradiction that $\alpha\in\aAut(S)$.
        Then $\alpha$ restricts to an automorphism of $D=\Cent(S)$, and hence extends to an automorphism of $K$.
        As $\Gal(K/\Q)=\{\id_K\}$, $\alpha$ must fix $K$, and hence $D$.
        By Proposition~\ref{FORM:PR:morita-context}, there is a regular bilinear form $b:M\times M\to K$
        with $K$ a double $D$-progenerator of the same type as $\alpha$.
        This means that $K$ can be understood as  a fractional $D$-ideal $J$, considered double $D$-module
        by setting $j\mul{0}d=j\mul{1}d=jd$ for all $j\in J$ and $d\in D$.
        Since $b$ is regular, we have an isomorphism $D^{\oplus 3}\oplus I= M\cong M^{[1]}=\Hom_D(M,J)\cong J^{\oplus3}\oplus JI^{-1}$.
        This is well-known to imply $D\cdot I\cong J^3\cdot JI^{-1}$, i.e.\ $I^2\cong J^4$, a contradiction.
        Therefore, we can take $R=S$ if $\nMat{S}{2}$ has an involution, and $R=\nMat{S}{2}$
        otherwise. Explicit choices of  $K$, $D$, $I$ are $K=\Q[x\where x^3=43]$, $D=\Z[x]$,
        $I=\ideal{2,x+1}$ (verified using SAGE).
    \end{example}

\rem{
    We begin with two lemmas, the second of which shows that in semiperfect rings with involution,
    one can also lift idempotents that are fixed by the involution.

    \begin{lem} \label{ISO:LM:Ideal-Split}
        Let $(R,\alpha)$ be a ring with involution.
        For a right ideal $I\leq R_R$, the following are equivalent:
        \begin{enumerate}
            \item[(a)] $I=eR$ for some $e\in\ids{R}$ with $e=e^\alpha$.
            \item[(b)] $R=I\oplus (\annl I)^\alpha$.
            \item[(c)] $R=I\oplus\annr(I^\alpha)$.
        \end{enumerate}
        Furthermore, the idempotent $e$ of (a) is unique, i.e.\ if $e'\in\ids{R}$
        is such that
        $e'^\alpha=e'$ and $I=e'R$, then $e=e'$.
    \end{lem}

    \begin{proof}
        (a)$\derives$(b) and (a)$\derives$(c) easily follows from the fact that for
        all $e\in\ids{R}$, $\annl(eR)=R(1-e)$ and
        $\annr(Re)=(1-e)R$.

        (b)$\derives$(a): We can write $1=e+(1-e)$ where $e\in I$ and
        $(1-e)\in(\annl I)^\alpha$. It is well known that $e$ and $1-e$
        are idempotents and $I=eR$. As $1-e\in\annl eR=\annl I$, we get $(1-e)^\alpha e=0$, implying
        $e=e^\alpha e$. But $e^\alpha e=(e^\alpha e)^\alpha=e^\alpha$, so $e=e^\alpha$.

        (c)$\derives$(a) follows by repeating the previous argument with $e^\alpha(1-e)$
        instead of $(1-e)e^\alpha$.

        To finish, assume $I=e'R$ and $e'^\alpha=e'$.
        Then, $e'=ee'=e^\alpha e'^\alpha=(e'e)^\alpha=e^\alpha=e$ (the first and one before
        last equalities hold since $eR=e'R$).
    \end{proof}

    \begin{lem} \label{ISO:LM:symmetirc-idempotent-lifting}
        Let $(R,\alpha)$ be a ring with involution and
        let $J$ be an idempotent lifting $R$-ideal contained in $\Jac(R)$ such that
        $J^\alpha=J$. We denote by $\quo{\alpha}$ the involution induced by $\alpha$ on $\quo{R}:=R/J$.
        Then for any $\quo{\alpha}$-invariant idempotent $\veps\in\ids{\quo{R}}$
        there is an $\alpha$-invariant idempotent $e\in\ids{R}$ such that $\quo{e}=\veps$.\footnote{
            Compare with \cite[Lm.\ 3]{Wene78}, which proves the same claim when $J$ is nil.
        }
    \end{lem}

    \begin{proof}
        Take some $f\in\ids{R}$ with $\quo{f} =\veps$. Since $\veps=\veps^{\quo{\alpha}}$, $f+(1-f)^{\alpha}-1$ lies in $J$ and hence
        $f+(1-f)^\alpha$ is invertible (because $J\subseteq \Jac(R)$).
        Therefore, $R=fR+(1-f)^\alpha R$. On the other hand, if $r\in
        fR\cap(1-f)^\alpha R$, then $(1-f)r=0$ (because $r=fr$) and $f^\alpha r=0$
        (because $(1-f)^\alpha r=r$), hence $(1-f+f^\alpha)r=0$, which implies $r=0$ since
        $(1-f)+f^\alpha \in\units{R}$. Therefore, $R=fR\oplus (1-f)^\alpha R$.
        Now,
        since
        $(\annl{fR})^\alpha=(1-f)^\alpha R$, Lemma \ref{ISO:LM:Ideal-Split} implies that
        there is $e\in \ids{R}$ such that
        $e=e^\alpha$ and $eR=fR$. Finally, Lemma \ref{ISO:LM:Ideal-Split}
        also implies that $\veps$ is the only $\quo{\alpha}$-invariant idempotent generating
        $\veps\quo{R}$ (as a right ideal) and therefore $\quo{e}=\veps$.
    \end{proof}

    Before presenting the proof of Proposition \ref{FORM:PR:involutions-pass-to-the-basic-ring},
    recall that if $R$ is a simple ring and $x\in R$, then the \emph{rank} of $x$ in $R$ is
    $\mathrm{length}(xR_R)$ (which is equal to $\mathrm{length}({}_RRx)$). If $e\in R$ is an idempotent, then
    the rank of $e$ is also equal to $\mathrm{length}(eRe_{eRe})$. In particular,
    the rank-$1$ idempotents in $R$ are precisely the primitive ones.
    In addition, if $x\in eRe$, then
    the rank of $x$ in $eRe$ and in $R$ is the same.

    \begin{proof}[Proof of Proposition \ref{FORM:PR:involutions-pass-to-the-basic-ring}]
        Write $\quo{R}=R/\Jac(R)= \prod_{i=1}^t\nMat{D_i}{n_i}$
        with each $D_i$ a division ring
        and let $\quo{\alpha}$ be the involution induced by $\alpha$ on $\quo{R}$.
        Replacing $(R,\alpha)$ with $(\nMat{R}{2},\smallSMatII{a}{b}{c}{d}\mapsto\smallSMatII{a^\alpha}{c^\alpha}{b^\alpha}{d^\alpha})$
        if necessary, we may assume $n_i\geq 2$ for all $i$.
        Let $W_i=\nMat{D_i}{n_i}$ and let $\delta_i$ be the unity of $W_i$. Then $\quo{\alpha}$ permutes $\{\delta_1,\dots,\delta_t\}$
        (because they are the primitive idempotents in $\Cent(\quo{R})$).

        For every $1\leq i\leq t$ we define an idempotent $\veps_i\in W_i$ as follows:
        Write $\delta_j=\delta_i^{\quo{\alpha}}$. If $i<j$, take $\veps_i$ to be an arbitrary rank-$2$ idempotent
        in $W_i$ (here we need $n_i\geq 2$).
        If $i>j$, take $\veps_i=\veps_j^\alpha$.
        If $i=j$, then $(W_i,\quo{\alpha})$
        is a ring with involution. The ring $W_i$ does not  contain
        an infinite set of orthogonal idempotents,
        hence there is an $\quo{\alpha}$-invariant idempotent $\veps'\in W_i$
        such that $\veps'W_i\veps'$ does not contain nontrivial $\quo{\alpha}$-invariant
        idempotents. By Theorem \ref{GEN:TH:gen-of-Osborns-result}, $\veps'$
        is or rank $1$ (case (i)) or $2$ (case (iii); case (ii) is impossible because
        $\veps'W_i\veps'$ is Morita equivalent to $W_i$, which is simple).
        If $\veps'$ is of rank $2$, take $\veps_i=\veps'$. Otherwise, since $n_i\geq 2$ and $\veps'$
        is primitive,
        $(1-\veps')W_i(1-\veps')$ is not the zero ring, hence there is a $\quo{\alpha}$-invariant
        idempotent $\veps''\in (1-\veps')W_i(1-\veps')$ such that $\veps''W_i\veps''$
        does not contain nontrivial $\quo{\alpha}$-invariant idempotents.
        Again, $\veps''$ is of rank $1$ or $2$. If it is of rank $2$, take $\veps_i=\veps''$.
        Otherwise, take $\veps_i=\veps'+\veps''$ (which is an idempotent of rank $2$ because $\veps'$ and
        $\veps''$ are orthogonal).

        By definition, the idempotents $\{\veps_i\}_{i=1}^t$
        are pairwise orthogonal, of rank $2$, and
        $\quo{\alpha}$ permutes them. Therefore,
        $\veps:=\sum_{i=1}^t\veps_i$ is $\quo{\alpha}$-invariant idempotent, so by
        Lemma \ref{ISO:LM:symmetirc-idempotent-lifting}, there is an $\alpha$-invariant idempotent
        $e\in R$ such that $\quo{e}=\veps$. Since $(eRe)^\alpha=eRe$, it is enough to prove that
        $eRe\cong \nMat{S}{2}$ (as $\Cent(R)$-algebras).

        Let $P_1,\dots,P_t$ be a complete set of the indecomposable f.g.\ projective
        right $R$-modules up to isomorphism.
        Then  $\quo{P}_1,\dots,\quo{P}_t$ is
        a complete list of indecomposable
        f.g.\ projective right $\quo{R}$-modules (where $\quo{P_i}=P_i/P_i\Jac(R)$). Therefore,
        after a suitable reordering, we may assume $\quo{P}_i$ is the unique simple $W_i$-module.
        Since $\veps_i$ was taken to be of rank $2$, $\veps_i\quo{R}\cong \quo{P}_i^2$, hence
        $\veps\quo{R}\cong \bigoplus_{i=1}^t\quo{P}_i^2\cong \quo{\bigoplus_{i=1}^tP_i^2}$.
        This means that $eR\cong \bigoplus_{i=1}^tP_i^2$ (as both modules are projective covers
        of $\veps\quo{R}$), hence $eRe\cong \End_R(\bigoplus_{i=1}^tP_i^2)\cong\nMat{S}{2}$, as required.
\rem{
        (ii) If we could choose $\veps_i$ to be a rank-$1$ idempotent in (i), then we would get
        that $eRe\cong S$. This is impossible in general, but under our assumptions, we can change
        $\alpha$ so that $\veps_i$ could be chosen to be of rank $1$.
        As in (i), we are easily reduced to finding a rank-$1$ $\quo{\alpha}$-invariant
        idempotent
        in $W_i$, where $\delta_i^{\quo{\alpha}}=\delta_i$.

        Assume that $W_i$ does not contain a primitive
        $\quo{\alpha}$-invariant idempotent. Then by Lemma~\ref{GEN:LM:structure-lemma-III},
        $D_i$ is a field and $n_i$ is even, which is a contradiction if we condition (2) holds.

        Assume condition (1) holds. Then $(R,\alpha)=(\quo{R},\quo{\alpha})\cong ({W}_i,\quo{\alpha}|_{W_i})\times
        (\prod_{j\neq i}W_j,\quo{\alpha}|_{\prod_{j\neq i}W_j})$ (the product is a product of rings with involution).
        This allows us to change the involution $\quo{\alpha}$ on $W_i$ without affecting its value on $\prod_{j\neq i}W_j$,
        so change $\quo{\alpha}|_{W_i}$ to be the transpose involution (recall that $D_i$ is a field), which clearly
        admits a primitive invariant idempotent.

        Finally, assume (3) holds. Then necessarily $i=1$.  By
        Lemma~\ref{GEN:LM:structure-lemma-III}, we may identify $(W_1,\quo{\alpha})$ with $(\nMat{F}{n},S)$, where
        $F=D_1$, $n=n_1$ and $S$ is defined as in Lemma~\ref{GEN:LM:structure-lemma-III}.
        Let
        \[
        x_1=\SMatII{0}{I}{-I}{0}\in \nMat{F}{n}
        \]
        where $I$ is the $\frac{n}{2}\times\frac{n}{2}$ identity matrix. Then $x_1-x_1^S=\smallSMatII{0}{-2I}{0}{2I}$,
        which is invertible in $W_1$ because $\Char D_1\neq 2$. By Lemma \ref{GEN:LM:structure-lemma-IV},
        for every $1<j\leq t$, there exists $x_j\in W_j$ such that $x_j-x_j^{\quo{\alpha}}\in \units{W_j}$.
        In particular, if we set $x=\sum_i x_i$, then $x-x^{\quo{\alpha}}\in \units{{\quo{R}}}$.
        Let $y$ be any preimage of $x$ in $R$. Then $t:=y-y^\alpha\in\units{R}$. It is now
        routine to check that the map $\beta:R\to R$ given by $r^\beta=t^{-1}r^\alpha t$ is an involution
        and
        the induced involution $\quo{\beta}:\quo{R}\to\quo{R}$ is the just the transpose involution on $W_1=\nMat{F}{n}$.
        Therefore, after $\alpha$ with $\beta$, we get an invariant primitive idempotent in $W_1$.
}
    \end{proof}

    \begin{remark}
        Using the ideas of
        Theorem \ref{GEN:TH:gen-of-Osborns-result}, it is possible to show that a simple ring with involution
        $(W,\alpha)$ does not contain an $\alpha$-invariant rank-$1$ idempotents if and only if $W\cong \nMat{F}{2n}$ for some
        field $F$ and $\alpha$ is symplectic. Therefore,
        if  none of the rings
        $W_i$ of the proof is isomorphic to $\nMat{F}{2n}$, the idempotents $\veps_i$ can be taken to be of rank $1$,
        hence $S$ has an involution (rather than $\nMat{S}{2}$).

        Even when some $W_i$ is isomorphic to $\nMat{F}{2n}$,  it is sometimes possible to change $\alpha$ (if necessary)
        so as to guarantee that $\quo{\alpha}|_{W_i}$ is not symplectic (e.g.\ by conjugating $\alpha$ with an
        suitable  $x\in\units{R}$ for which $x^\alpha=-x$). For example, this is possible when $R$ is semisimple.
        In particular, if a semisimple ring has an involution, then so is the basic ring that is Morita equivalent
        to it.

        Whether $S$ always has an involution is still open.
    \end{remark}

}

}

\section*{Acknowledgments}

    I deeply thank to my supervisor, Uzi Vishne, for guiding me through the research\rem{,
    to David Saltman for suggesting the problem discussed at section
    \ref{section:FORM:applications}}, to Lance Small for a very
    beneficial conversation, and to Jean-Pierre Tignol for his comments on earlier versions.

\bibliographystyle{plain}
\bibliography{MyBib}

\end{document}